
\documentclass[notitlepage]{amsart}

     %for  boxes

\setlength{\textheight}{43pc}
\setlength{\textwidth}{28pc}

% Para los diagramas conmutativos:
%\usepackage[cmtip,arrow]{xy}
\usepackage[all]{xy}
\usepackage{pb-diagram,pb-xy}

\usepackage{amsmath,amscd}
\usepackage{enumerate}
\usepackage{amsmath}
\usepackage{amssymb}
\usepackage{amsfonts}
\usepackage{eufrak}
\usepackage{latexsym}
\usepackage{leftidx}
\usepackage{amsthm}
\usepackage{color}

\newtheorem{theorem}{Theorem}[section]
\newtheorem{proposition}[theorem]{Proposition}
\newtheorem{lemma}[theorem]{Lemma}
\newtheorem{defin}[theorem]{Definition}
\newtheorem{corollary}[theorem]{Corollary}
\newtheorem{remark}[theorem]{Remark}
\newtheorem{example}[theorem]{Example}

\newcommand{\mini}{{\mathrm{min}}}

\newcommand{\modu}{{\mathrm{mod}}}

\newcommand{\Ker}{{\mathrm{Ker}}}
\newcommand{\Coker}{{\mathrm{Coker}}}
\newcommand{\Hom}{{\mathrm{Hom}}}

\newcommand{\Tr}{{\mathrm{Tr}}}
\newcommand{\End}{{\mathrm{End}}}
\newcommand{\Ext}{{\mathrm{Ext}}}
\newcommand{\Tor}{{\mathrm{Tor}}}
\newcommand{\proj}{{\mathrm{proj}}}

\newcommand{\F}{{\mathcal{F}}}

\newcommand{\Q}{{\mathbf{Q}}}

\newcommand{\C}{{\mathcal{C}}}

\newcommand{\add}{{\mathrm{add}}}
\newcommand{\rad}{{\mathrm{rad}}}
\newcommand{\Top}{{\mathrm{top}}}

\newcommand{\ind}{{\mathrm{ind}}}
\renewenvironment{proof}{\noindent \bf Proof. \rm}{$\quad \hfill \square$ \bigskip}
%\textcolor{red}
\begin{document}

\subjclass{Primary 16G10. Secondary 16E99}

\title[existence and construction of proper costratifying systems]
{On the existence and construction of proper costratifying systems}

\author{\sc O. Mendoza, M. I. Platzeck and M. Verdecchia}

%\date{ }

\maketitle

\begin{abstract}
In this paper we further study the notion of proper costratifying systems, defined in \cite{MPV}. We give sufficient conditions for their existence, and investigate the relation between the stratifying systems defined by K. Erdmann and C. S\'aenz in \cite{ES} with the proper costratifying systems.
\end{abstract}

\section*{Introduction.}

In \cite{MPV} we define and study the notion of a proper costratifying
system, which is a generalization of the so called proper costandard modules to the
context of stratifying systems.
  \

K. Erdmann and C. S\'aenz defined in \cite{ES} the notion of a stratifying system
$(\Theta=\{\Theta(i)\}_{i=1}^{t},\underline{Y},\leq),$ and proved that such a system satisfies that each $\Theta(i)$ is indecomposable
and $\Ext_\Lambda ^{1}(\Theta(i),\Theta(j))=0$ for $i\geq j.$ Reciprocally, they also showed that given a family of indecomposable $\Lambda$-modules $\Theta=\{\Theta(i)\}_{i=1}^{t}$ satisfying that $\Ext_\Lambda ^{1}(\Theta(i),\Theta(j))=0$ for $i\geq j,$ and such that $\Hom_\Lambda(\Theta(i),\Theta(j))=0$ for $i>j,$ there is a stratifying system $(\Theta,\underline{Y},\leq)$.
\

For proper costratifying systems, the situation is different. On the one hand, it is true that the $\Psi(i)$'s of a proper costratifying system
$(\Psi,\Q,\leq)$ satisfy that $\Ext^{1}_\Lambda(\Psi(i),\Psi(j))=0$ for $i<j$ (see \cite[Lemma 3.8]{MPV}). However, the existence of a family
$\Psi=\{\Psi(i)\}_{i=1}^{t}$ such that $\Hom_\Lambda(\Psi(i),\Psi(j))=0=\Ext^{1}_\Lambda(\Psi(i),\Psi(j))$ for $i<j$, does not ensure the
existence of a proper costratifying system $(\Psi,\Q,\leq),$ even if we assume that $\End_\Lambda(\Psi(i))$ is a division ring for all $i\in[1,t]$.
\

In this paper we prove an existence result for proper costratifying systems in this direction. To this end, we assume as an additional hypothesis that the length of the indecomposable modules filtered by $\Psi$ is uniformly bounded.
\

Let $P(1), ..., P(n)$ be an ordered sequence of the non-isomorphic indecomposable projective modules over an artin algebra $\Lambda$. By definition, the standard module ${}_\Lambda\Delta(i)$ is
the largest factor module of $P(i)$ with composition factors only amongst $S(1), ..., S(i)$, where $S(j)$ is the simple top of $P(j)$.
Denote by mod$(\Lambda)$ the category of finitely generated left $\Lambda$-modules. Let $\mathcal{F}({}_\Lambda\Delta)$ denote the subcategory
of mod$(\Lambda)$ consisting of the $\Lambda$-modules having a filtration with factors isomorphic to standard modules. The algebra
$\Lambda$ is said to be standardly stratified if all projective $\Lambda$-modules belong to $\mathcal{F}({}_\Lambda\Delta)$
(see \cite{CPS}, \cite{ADL}, \cite{AHLU}, \cite{Webb}, \cite{ES}, \cite{Platzeck Reiten}, \cite{Xi}).
\

For an artin algebra $\Lambda$ there exists another family of modules which plays an important role: the proper standard (respectively, the proper costandard) modules ${}_\Lambda\overline{\Delta}(i)$ (respectively, ${}_\Lambda\overline{\nabla}(i)$), defined as some appropriated factors of the ${}_\Lambda\Delta(i)$ (respectively, submodules of the ${}_\Lambda\nabla(i)$). These modules were defined by V. Dlab and have the property that $\Lambda$ is a standardly stratified algebra, that is, all projective $\Lambda$-modules belong to $\mathcal{F}({}_\Lambda\Delta)$, if and only if all injective $\Lambda$-modules belong to $\mathcal{F}({}_\Lambda\overline{\nabla})$ (see \cite{Dlab} and \cite{L}). Here $\mathcal{F}({}_\Lambda\overline{\nabla})$ denotes the subcategory
of mod$(\Lambda)$ consisting of the $\Lambda$-modules having a filtration with factors isomorphic to proper costandard modules.
\

In connection with the study of standardly stratified algebras, I. Agoston, D. Happel, E. Luk\'acs and L. Unger showed, for such an algebra $(\Lambda,\leq)$, that $\mathcal{F}({}_\Lambda\overline{\nabla})=\mathcal{F}({}_\Lambda\Delta)^{\bot_1}=\{M\in\modu(\Lambda):\Ext^1_\Lambda(\mathcal{F}({}_\Lambda\Delta),M)=0\}$ (see \cite{AHLU}). In addition, they proved that there exists a tilting $\Lambda$-module $T=\{T(i)\}^{n}_{i=1}$, called the characteristic tilting module, such that $\add(T)=\F(\leftidx{_\Lambda}{\Delta})\cap\F(\leftidx{_\Lambda}{\Delta})^{\bot_1}$ (see also \cite{Platzeck Reiten}). Moreover, $(\leftidx{_\Lambda}{\Delta},\{T(i)\}^{n}_{i=1},\leq)$ is a stratifying system and $(\leftidx{_\Lambda}{\overline{\nabla}},\{T(i)\}^{n}_{i=1},\leq)$ is a proper costratifying system. This motivates the following problem:
given a set $\Q=\{Q(i)\}^{t}_{i=1}$ of pairwise non-isomorphic indecomposable $\Lambda$-modules and a linear order $\leq$ on $\{1,2,...t\}$, how does the existence of a proper costratifying system $(\Psi,\Q,\leq)$ relate to the existence of a stratifying system $(\Theta,\Q,\leq)$? In section 5 we study this question, proving two theorems. The first one gives, for a given proper costratifying system $(\Psi,\Q,\leq)$, necessary and sufficient conditions for the existence of a family $\Theta=\{\Theta(i)\}^{t}_{i=1}$ in $\modu(\Lambda)$ such that $(\Theta,\Q,\leq)$ is a stratifying system. The second one is the reciprocal result.

\section{Preliminaries}

Throughout this paper $\Lambda$ is an \emph{artin $R$-algebra}, where $R$ is a commutative artinian ring. The term \lq $\Lambda$-module\rq  \ means \emph{finitely generated left $\Lambda$-module}. The category of finitely
generated left $\Lambda$-modules is denoted by $\modu\,(\Lambda)$ and the full
subcategory of finitely generated projective $\Lambda$-modules by
$\proj\,(\Lambda)$. Let $\ind\;(\Lambda)$ denote the full subcategory of $\modu\;(\Lambda)$ whose objects consist of chosen representatives of isomorphism classes of indecomposable modules in $\modu\;(\Lambda)$. For $\Lambda$-modules $M$ and $N$,
$\Tr_M\,(N)$ is the \emph{trace} of $M$ in $N$, that is, $\Tr_M\,(N)$ is
the $\Lambda$-submodule of $N$ generated by the images of all
morphisms from $M$ to $N$.
Let $D:\modu\,(\Lambda)\rightarrow\modu\,(\Lambda^{op})$ denote the \emph{usual duality} for artin algebras, and $*$ denote the functor $\Hom_\Lambda(-,\Lambda):\modu\,(\Lambda)\rightarrow\modu\,(\Lambda^{op})$. Then $*$ induces a duality from $\proj\,(\Lambda)$ to
$\proj\,(\Lambda^{op}).$
For a given natural number $t,$ we set $[1,t]=\{1,2,\cdots, t\}.$
\

Let $\Lambda$ be an algebra and $n$ be the rank of the Grothendieck group $K_0\,(\Lambda)$. We fix a linear order $\leq$ on $[1,n]$ and a representative set ${}_\Lambda P=\{{}_{\Lambda}P(i)\;:\;i\in[1,n]\}$ containing one module of each iso-class of indecomposable projective $\Lambda$-modules. The injective envelope of the simple $\Lambda$-module ${}_{\Lambda}S(i)=\Top\,({}_{\Lambda}P(i))$ is denoted by ${}_{\Lambda}I(i)$. For the opposite algebra $\Lambda^{op}$, we always consider the representative set ${}_{\Lambda^{op}} P=\{{}_{\Lambda^{op}}P(i):i\in[1,n]\}$ of indecomposable projective $\Lambda^{op}$-modules, where ${}_{\Lambda^{op}}P(i)=({}_{\Lambda}P(i))^*$ for all $i\in[1,n]$. So, with these choices in mind, we recall the definition (see \cite{R,DR,ADL,Dlab}) of the following classes of $\Lambda$-modules:
\

The set of \emph{standard $\Lambda$-modules} is $\leftidx{_\Lambda}{\Delta}=\{
\leftidx{_\Lambda}{\Delta}(i):i\in[1,n]\},$ where
$\leftidx{_\Lambda}{\Delta}(i)={}_{\Lambda}P(i)/\Tr_{\oplus_{j>i}\,{}_{\Lambda}P(j)}\,({}_{\Lambda}P(i))$. Then,
${}_{\Lambda}{\Delta}(i)$ is the largest factor module of
${}_{\Lambda}P(i)$ with composition factors only amongst
${}_{\Lambda}S(j)$ for $j\leq i.$ The set of \emph{costandard
$\Lambda$-modules} is
${}_{\Lambda}\!\nabla=D({}_{\Lambda^{op}}{\Delta}),$
where the pair $({}_{\Lambda^{op}}P,\leq)$ is used to compute
${}_{\Lambda^{op}}\Delta.$
 \

The set of \emph{proper standard $\Lambda$-modules} is
${}_{\Lambda}\overline{\Delta}=\{{}_{\Lambda}\overline{\Delta}(i):i\in[1,n]\},$
where
${}_{\Lambda}\overline{\Delta}(i)={}_{\Lambda}P(i)/\Tr_{\oplus_{j\geq
i}\,{}_{\Lambda}P(j)}\,(\rad\,{}_{\Lambda}P(i))$. Then,
${}_{\Lambda}\overline{\Delta}(i)$ is the largest factor module of
${}_{\Lambda}{\Delta}(i)$ satisfying the multiplicity condition
$[{}_{\Lambda}\overline{\Delta}(i):S(i)]=1.$ The set of \emph{proper
costandard $\Lambda$-modules} is
${}_{\Lambda}\!\overline{\nabla}=D({}_{\Lambda^{op}}\overline{\Delta}),$
where the pair $({}_{\Lambda^{op}}P,\leq)$ is used to compute
${}_{\Lambda^{op}}\overline{\Delta}.$
 \

Let $\F({}_{\Lambda}{\Delta})$ be the subcategory of $\modu\,(\Lambda)$ consisting of the
$\Lambda$-modules having a ${}_{\Lambda}{\Delta}$-filtration, that is,
a filtration  $0=M_0\subseteq M_1\subseteq\cdots\subseteq M_s=M$ with factors $M_{i+1}/M_i$
isomorphic to a module in ${}_{\Lambda}{\Delta}$ for all $i$. The algebra $\Lambda$ is
a \emph{standardly stratified algebra} with respect to the linear
order $\leq$ on the set $[1,n]$, if
$\proj\,(\Lambda)\subseteq\F({}_{\Lambda}{\Delta})$ (see \cite{ADL,Dlab,CPS}).
\

Let $\Lambda$ be an algebra and $\C$ a class of objects in $\modu\,(\Lambda)$. For
each natural number $n$, we set ${}^{\perp_n}\C=\{M\in\modu\,(\Lambda): \Ext_\Lambda^n(M,-)|_{\C}=0\}$
and ${}^{\perp}\C=\cap_{n>0}\,{}^{\perp_n}\C.$ The notions of $\C^{\perp_n}$
and $\C^{\perp}$ are introduced similarly.
\

In addition, we recall that the \emph{$\Lambda$-length of the class} $\C$ is
$\ell_\Lambda(\C)=\sup\,\{\ell_\Lambda(C)\;:\;C\in
\C\}$, where $\ell_\Lambda(C)$ stands for the length of the $\Lambda$-module $C$.
\

Finally, for a $\Lambda$-module $M$, $\add\;(M)$ denotes the full subcategory of $\modu\;(\Lambda)$ consisting of the direct summands of direct sums of copies  of $M$.

\section{Proper pre-costratifying systems}

We begin by recalling the definition of proper costratifying systems introduced in \cite{MPV}.
% Recall that $[1,t]:=\{1,2,\cdots,t\}.$

\begin{defin}
\label{def de sist. estricto}
Let $\Lambda$ be an artin $R$-algebra. A proper costratifying system
$(\Psi,\Q,\leq)$ of size $t$ in $\modu(\Lambda)$ consists of two
families of $\Lambda$-modules $\Psi=\{\Psi(i)\}_{i=1}^t$ and $\Q=\{
Q(i)\}_{i=1}^t,$ with $Q(i)$ indecomposable for all $i$, and a
linear order $\leq$ on the set $[1,t],$ satisfying the following
conditions.
\begin{itemize}
 \item[(a)] $\End_\Lambda(\Psi(i))$ is a division ring for all $i\in[1,t]$.
 \item[(b)] $\Hom_\Lambda(\Psi(i),\Psi(j))=0$ if $i<j.$
 \item[(c)] For each $i\in[1,t],$ there is an exact sequence
$$\varepsilon_i: 0\longrightarrow Z(i)\longrightarrow Q(i)\overset{\beta_i}{\longrightarrow}\Psi(i)\longrightarrow0,$$
with $Z(i)\in \F(\{\Psi(j):j\leq i\}).$
 \item[(d)] $\Q\subseteq{}^{\perp_1}\Psi$, that is, $\Ext_\Lambda
 ^{1}(Q(i),-)|_{\Psi}=0$ for any $i\in[1,n].$
\end{itemize}
\end{defin}

%K. Erdmann and C. S\'aenz defined in \cite{ES} the notion of a stratifying system
%$(\Theta=\{\Theta(i)\}_{i=1}^{t},\underline{Y},\leq),$ and proved that such a system satisfies that each $\Theta(i)$ is indecomposable
%and $\Ext_\Lambda ^{1}(\Theta(i),\Theta(j))=0$ for $i\geq j.$ Reciprocally, they also showed that given a family of indecomposable $\Lambda$-modules $\Theta=\{\Theta(i)\}_{i=1}^{t}$ satisfying that $\Ext_\Lambda ^{1}(\Theta(i),\Theta(j))=0$ for $i\geq j,$ and such that $\Hom_\Lambda(\Theta(i),\Theta(j))=0$ for $i>j,$ there is a stratifying system $(\Theta,\underline{Y},\leq)$.
%\
%
%For proper costratifying systems, the situation is different. On the one hand, it is true that the $\Psi(i)$'s of a proper costratifying system
%$(\Psi,\Q,\leq)$ satisfy that $\Ext^{1}_\Lambda(\Psi(i),\Psi(j))=0$ for $i<j$ (see \cite[Lemma 3.8]{MPV}). However,

Our aim is to show an existence result for proper costratifying systems along the lines of the existence result of K. Erdmann and C. S\'aenz for stratifying
systems, mentioned in the introduction of this paper. As we said there, we have a different situation for proper costratifying systems. In fact, the existence of a family
$\Psi=\{\Psi(i)\}_{i=1}^{t}$ such that $\Hom_\Lambda(\Psi(i),\Psi(j))=0=\Ext^{1}_\Lambda(\Psi(i),\Psi(j))$ for $i<j$, does not guarantee the
existence of a proper costratifying system $(\Psi,\Q,\leq),$ even if we assume that $\End_\Lambda(\Psi(i))$ is a division ring for all $i\in[1,t],$
as we show in the following example.

\begin{example}
\rm Let $\Lambda$ be the path algebra of the quiver

$$\xymatrix{
 {\circ} \ar@<3pt>[r] \ar@<-3pt>[r]
       & {\circ}
               }$$\vspace{-0.7cm}
$$\xymatrix{1&2}$$
\

\noindent with the natural order $1\leq 2$. Consider $\Psi=\{\Psi(1)=k\overset{1}{\underset{1}{\rightrightarrows}} k\}$. Then $\Psi(1)$
belongs to a tube $\mathcal{T}$ in the AR quiver of $\Lambda$. It follows from the structure of the tube that $\mathcal{T}\subseteq\F(\Psi).$
 So $\F(\Psi)$ contains modules of arbitrary length. Let now $Q(1)\in\F(\Psi)$. Then, there exists a monomorphism $\Psi(1)\rightarrow Q(1)$,
so that $Q(1)\in\mathcal{T}$ or $Q(1)$ is preinjective. If there is, moreover, a map $Q(1)\rightarrow\Psi(1)$, then $Q(1)\in\mathcal{T}$.
So, if $Q(1)$ satisfies (c) in \ref{def de sist. estricto}, then $Q(1)=k^{i}\overset{1}{\underset{J_{i,1}}{\rightrightarrows}} k^{i}$ for some $i\geq1$ (here $J_{i,1}$ denotes the Jordan block corresponding to the eigenvalue $1$), and therefore $\Ext^{1}_\Lambda(Q(1),\Psi(1))\neq0$
since there is a non-split exact sequence
$$0\longrightarrow\Psi(1)\longrightarrow (k^{i+1}\overset{1}{\underset{J_{i+1,1}}{\rightrightarrows}} k^{i+1})\longrightarrow Q(1)\longrightarrow 0.$$
This proves that there is no module $Q(1)\in\F(\Psi)$ satisfying (c) and (d) in Definition \ref{def de sist. estricto}. Thus, there is no proper
costratifying system $(\Psi,\{Q(1)\},\leq)$ for the family $\Psi$.
\end{example}

With the previous example in mind, we prove our desired existence result for proper costratifying systems, assuming as an additional hypothesis that the length $\ell_\Lambda(X)$ of the indecomposable modules $X$ in $\F(\Psi)$ is uniformly bounded. We also introduce the following notion.

\begin{defin}
\label{def de pre-sist. estricto}
Let $\Lambda$ be an artin $R$-algebra. A proper pre-costratifying system
$(\Psi,\leq)$ of size $t$ in $\modu\,(\Lambda)$ consists of a
family of $\Lambda$-modules $\Psi=\{\Psi(i)\}_{i=1}^t$ and a
linear order $\leq$ on the set $[1,t],$ satisfying the following
conditions.
\begin{itemize}
 \item[(a)] $\End_\Lambda(\Psi(i))$ is a division ring for all $i\in[1,t]$.
 \item[(b)] $\Hom_\Lambda(\Psi(i),\Psi(j))=0$ for $i<j.$
 \item[(c)] $\Ext^{1}_\Lambda(\Psi(i),\Psi(j))=0$ for $i<j$.
\end{itemize}
\end{defin}

\begin{remark} \label{compaginamientoFiltrados}
\rm It is not hard to prove, by using an inductive argument, that whenever (b) in the above definition holds for the family $\Psi$, then
$\Hom_\Lambda(X,Y)=0$, for $X\in\F(\{\Psi(j):j< s\})$, $Y\in\F(\{\Psi(j):j\geq s\})$ and $s\in[1,t]$.
\end{remark}

\begin{defin}
\label{la longitud maxima ordenada}
Let $(\Psi,\leq)$ be a proper pre-costratifying system of size $t$ in $\modu(\Lambda),$  and let $X\in \F(\Psi)$. An
\textbf{ordered $\Psi$-filtration of $X$ of length $n$} is a $\Psi$-filtration
$$\mathfrak{O}_{n,X}\;:\quad 0=X_0\subseteq X_1\subseteq X_2\subseteq\cdots\subseteq X_{n-1}\subseteq X_n=X,$$
with factors $X_k/X_{k-1}\simeq\Psi(i_k)$ for all $1\leq k\leq n$ and $i_1\leq i_2\leq\cdots\leq i_n.$ In this case, we also say that
$\text{\rm \textbf{min}}\,(\mathfrak{O}_{n,X})=i_1$.
%The \textbf{maximum ordered length} of $X$ is defined as follows
%$$\ell_{\M}(X):=\sup\,\{n\in\mathbb{N}\;:\; \text{\rm  \ there is an ordered }\Psi\text{\rm  -filtration } \mathfrak{O}_{n,X}\}.$$
%\textcolor{red}{Moreover, for a class $\C$ of $\Psi$-filtered modules, we set $\ell_{\M}(\C):=\sup\,\{\ell_{\M}(C)\;:\;C\in\C\}.$}
\end{defin}

\begin{remark}
\rm  The hypothesis about $\Ext^{1},$ assumed in Definition \ref{def de pre-sist. estricto} (c), ensures the existence of ordered
$\Psi$-filtrations (see \cite[Lemma 3.10]{MPV}). Moreover, any $\Psi$-filtration of $X$ can be rearranged to an ordered one with the
same $\Psi$-composition factors.
\end{remark}

We aim to show that $\text{min}\,(\mathfrak{O}_{n,X})$ depends only on $X$, and not on the chosen ordered $\Psi$-filtration. In order to do that,
we start with the following lemma.

\begin{lemma}\label{LemmaDesigualdad} Let $(\Psi,\leq)$ be a proper pre-costratifying system of size $t$ in $\modu(\Lambda),$  and consider
the following diagram in $\modu\,(\Lambda)$
$$\begin{CD}
@. @. @. \Psi(\tilde{s})\\
@. @. @. @VVqV\\
\varepsilon\;:\;0 @>>> \Psi(s) @>>> X @>p>> Y @>>> 0\, ,
\end{CD}$$
\noindent where $\varepsilon$ is an exact sequence. If $X\in\F(\{\Psi(j)\;:\;j\geq s\})$ and $q\neq 0,$ then $\tilde{s}\geq s.$
\end{lemma}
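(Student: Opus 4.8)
The plan is to argue by contradiction, assuming $\tilde{s}<s$, and derive a contradiction with the hypothesis $q\neq 0$. First I would observe that if $\tilde{s}<s$, then $s\geq \tilde s+1$, so every module in $\{\Psi(j):j\geq s\}$ lies in $\F(\{\Psi(j):j\geq \tilde s+1\})$; in particular $X\in\F(\{\Psi(j):j\geq \tilde s+1\})$. On the other hand, $\Psi(\tilde s)\in\F(\{\Psi(j):j<\tilde s+1\})$ trivially. By Remark~\ref{compaginamientoFiltrados} (applied with the threshold $s'=\tilde s+1$), we then get $\Hom_\Lambda(\Psi(\tilde s),X)=0$.

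Now I need to compose $q\colon\Psi(\tilde s)\to Y$ with the surjection $p\colon X\to Y$ in the right direction to land in $\Hom_\Lambda(\Psi(\tilde s),X)$. Since $q$ itself is a map into $Y$, not $X$, the key step is to lift $q$ through $p$: I would like to produce a nonzero map $\Psi(\tilde s)\to X$ whose composite with $p$ is $q$ (or at least a nonzero map $\Psi(\tilde s)\to X$), contradicting the vanishing of $\Hom_\Lambda(\Psi(\tilde s),X)$. This is where the $\Ext^1$ hypothesis of Definition~\ref{def de pre-sist. estricto}(c) enters: applying $\Hom_\Lambda(\Psi(\tilde s),-)$ to the exact sequence $\varepsilon$ gives the exact sequence
$$\Hom_\Lambda(\Psi(\tilde s),X)\xrightarrow{p_*}\Hom_\Lambda(\Psi(\tilde s),Y)\xrightarrow{\partial}\Ext^1_\Lambda(\Psi(\tilde s),\Psi(s)).$$
Since $\tilde s<s$, condition (c) gives $\Ext^1_\Lambda(\Psi(\tilde s),\Psi(s))=0$, hence $p_*$ is surjective, so $q=p\circ \tilde q$ for some $\tilde q\in\Hom_\Lambda(\Psi(\tilde s),X)$. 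As $q\neq 0$, also $\tilde q\neq 0$, so $\Hom_\Lambda(\Psi(\tilde s),X)\neq 0$ — contradicting the previous paragraph. Therefore $\tilde s\geq s$.

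The main obstacle I anticipate is making sure the two threshold conditions are aligned correctly: Remark~\ref{compaginamientoFiltrados} requires $\Psi(\tilde s)\in\F(\{\Psi(j):j<s'\})$ and $X\in\F(\{\Psi(j):j\geq s'\})$ for a \emph{single} index $s'$, and one must check that the assumption $\tilde s<s$ indeed lets us choose such an $s'$ (namely $s'=\tilde s+1$, which works precisely because $\tilde s<s$ forces $s\geq s'$, so the $\Psi(j)$ with $j\geq s$ that filter $X$ all have $j\geq s'$). Apart from this indexing bookkeeping, the remaining steps — the long exact sequence for $\Ext$, the vanishing from (c), and the surjectivity of $p_*$ — are routine.
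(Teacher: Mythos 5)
Your proof is correct and relies on the same two ingredients as the paper's: condition (c) of Definition~\ref{def de pre-sist. estricto} to lift $q$ through $p$, and Remark~\ref{compaginamientoFiltrados} for the $\Hom$-vanishing — the only difference is presentational, since the paper constructs the pullback $\varepsilon'$ of $\varepsilon$ along $q$ and argues by cases on whether $\varepsilon'$ splits, whereas you run the long exact $\Ext$-sequence and argue by contradiction; both come down to the same split/lift dichotomy. On the indexing worry you flag: the threshold can simply be taken to be $s'=s$ itself — from $\tilde{s}<s$ one has $\Psi(\tilde{s})\in\F(\{\Psi(j):j<s\})$ while $X\in\F(\{\Psi(j):j\geq s\})$ by hypothesis, so Remark~\ref{compaginamientoFiltrados} applies at once; this also sidesteps the fact that, since $\leq$ is an \emph{arbitrary} linear order on $[1,t]$, the integer $\tilde{s}+1$ need not be the $\leq$-successor of $\tilde{s}$ and $\tilde{s}<s$ need not give $s\geq\tilde{s}+1$ in $\leq$.
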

\begin{proof} Let $X\in\F(\{\Psi(j)\;:\;j\geq s\})$ and $q\neq 0.$ Consider the following pull-back diagram
$$\begin{CD}
 \varepsilon'\;:\;0 @>>> \Psi(s) @>>> E @>p>> \Psi(\tilde{s}) @>>> 0 \\
 @. @| @VVV   @VVqV \\
\varepsilon\;:\;0 @>>> \Psi(s) @>>> X @>p>> Y @>>> 0\, .
\end{CD}$$
\noindent If $\varepsilon'$ does not split then $\Ext^1_{\Lambda}(\Psi(\tilde{s}),\Psi(s))\neq 0,$ and so  $\tilde{s}\geq s.$ Assume now
that $\varepsilon'$ splits. Therefore, there exists $q':\Psi(\tilde{s})\to X$ such that $pq'=q,$ and since $q\neq 0,$ it follows that
$0\neq q'\in\Hom_\Lambda(\Psi(\tilde{s}), X).$ Thus, from Remark \ref{compaginamientoFiltrados}, we conclude that $\tilde{s}\geq s$ since
$X\in\F(\{\Psi(j)\;:\;j\geq s\}).$
\end{proof}

\begin{proposition}\label{MinimosCoinciden} Let $(\Psi,\leq)$ be a proper pre-costratifying system of size $t$ in $\modu\,(\Lambda),$  and
let $X\in\F(\Psi).$ If $\mathfrak{O}_{m,X}$ and $\mathfrak{O'}_{n,X}$ are ordered $\Psi$-filtrations of $X,$ then
$\min\,(\mathfrak{O}_{m,X})=\min\,(\mathfrak{O'}_{n,X}).$
\end{proposition}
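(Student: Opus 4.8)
The plan is to show both orderings have the same minimum by proving each is $\leq$ the other. Write $\min(\mathfrak{O}_{m,X})=a$ with factors $\Psi(a)=X_1/X_0\subseteq X_1\subseteq\cdots\subseteq X_m=X$, and $\min(\mathfrak{O}'_{n,X})=b$ with factors coming from $\Psi(b),\Psi(i'_2),\dots$ Since both filtrations are ordered, every $\Psi$-composition factor $\Psi(i_k)$ of the first filtration has $i_k\geq a$, so $X\in\F(\{\Psi(j):j\geq a\})$; symmetrically $X\in\F(\{\Psi(j):j\geq b\})$. By symmetry it suffices to prove $b\geq a$.

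First I would use the second filtration to extract a nonzero map into $X$ from a low-indexed $\Psi$. Precisely, the bottom step of $\mathfrak{O}'_{n,X}$ gives a monomorphism $\Psi(b)\hookrightarrow X$, in particular a nonzero morphism $q\colon\Psi(b)\to X$. Now consider the bottom step of the first filtration, $0\to\Psi(a)\to X\overset{p}{\to}X/X_1\to 0$, and compose: the map $pq\colon\Psi(b)\to X/X_1$. I would split into two cases. If $pq\neq 0$, then apply Lemma \ref{LemmaDesigualdad} with $s=a$, $\tilde s=b$, the exact sequence $\varepsilon$ being $0\to\Psi(a)\to X\to X/X_1\to 0$ and the vertical map $q$ (note $X\in\F(\{\Psi(j):j\geq a\})$ as observed), concluding $b\geq a$. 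If instead $pq=0$, then $q$ factors through the kernel $\Psi(a)$, giving a nonzero $q'\colon\Psi(b)\to\Psi(a)$; by condition (b) of Definition \ref{def de pre-sist. estricto} (contrapositive: a nonzero map $\Psi(b)\to\Psi(a)$ forces $b\geq a$) we again get $b\geq a$. Either way $b\geq a$, and by the symmetric argument $a\geq b$, hence $a=b$.

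The main subtlety — the step I expect to need the most care — is justifying that the bottom step of an ordered $\Psi$-filtration really does embed $\Psi$ of the minimal index into $X$ and that the map $q$ obtained is genuinely nonzero and usable in Lemma \ref{LemmaDesigualdad}; this rests on the observation that an ordered filtration makes $X$ an object of $\F(\{\Psi(j):j\geq \min\})$, which is exactly the hypothesis the lemma needs. Everything else is a clean case analysis feeding into Lemma \ref{LemmaDesigualdad} and condition (b), together with the symmetry between the two filtrations. Note that Remark \ref{compaginamientoFiltrados} is implicitly what underlies the $pq=0$ case, but here it reduces to the direct statement of (b) since both source and target are single $\Psi$'s.
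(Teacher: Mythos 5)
Your proof is correct and follows exactly the paper's own argument: compose the two bottom inclusions, split on whether the composite into the cokernel is zero, and use Lemma \ref{LemmaDesigualdad} in one case and Definition \ref{def de pre-sist. estricto}(b) in the other, then symmetrize. The only blemish is a notational slip in the first case: the vertical map you feed to Lemma \ref{LemmaDesigualdad} should be $pq\colon\Psi(b)\to X/X_1$ rather than $q$, which is what your case hypothesis $pq\neq 0$ is already checking.
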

\begin{proof} Let $\min\,(\mathfrak{O}_{m,X})=i_1$ and $\min\,(\mathfrak{O}_{n,X})=j_1.$ So, we have the exact sequences
$\varepsilon_1\;:\;0\to\Psi(i_1)\stackrel{\nu_1}{\to}X\stackrel{p_1}{\to}Y\to 0$ and
$\varepsilon_2\;:\;0\to\Psi(j_1)\stackrel{\nu_2}{\to}X\stackrel{p_2}{\to}Z\to 0$ in $\modu(\Lambda).$ Let $q:=p_1\nu_2:\Psi(j_1)\to Y.$
If $q\neq 0,$ then from Lemma \ref{LemmaDesigualdad} it follows that $j_1\geq i_1$ since  $X\in\F(\{\Psi(j):j\geq i_1\}).$ On the other hand,
if $q=0,$ there is some $\nu:\Psi(j_1)\to\Psi(i_1)$ such that $\nu_1\nu=\nu_2\neq 0.$ Thus $0\neq\nu\in\Hom_\Lambda(\Psi(j_1),\Psi(i_1))$ and
 therefore $j_1\geq i_1.$ Similarly, it can be seen that $j_1\leq i_1;$ proving the result.
\end{proof}

The above proposition allows us to introduce the following definition.
\begin{defin}
\label{definicion minimo de X}
Let $(\Psi,\leq)$ be a proper pre-costratifying system of size $t$ in $\modu(\Lambda),$  and
let $X\in\F(\Psi)$. Then $\mini\,(X):=\mini\,(\mathfrak{O}_{n,X})$ for any ordered $\Psi$-filtration $\mathfrak{O}_{n,X}$ of $X$, of length $n.$
\end{defin}

It follows directly, from the definition given above, that the following statement holds.

\begin{remark} \label{obs. sobre el minimo de X}
\rm Let $(\Psi,\leq)$ be a proper pre-costratifying system of size $t$ in $\modu(\Lambda),$  and
let $X\in\F(\Psi)$. Then, there exists an exact sequence $0\rightarrow\Psi(\mini\,(X))\rightarrow X\rightarrow X'\rightarrow0$ in $\F(\Psi),$ satisfying the following two
conditions:
 \begin{itemize}
     \item [(a)] $\min\,(X')\geq\mini\,(X)$.
\vspace{.2cm}
     \item [(b)] If $X$ has an ordered $\Psi$-filtration of length $n$, then $X'$ has an ordered $\Psi$-filtration of length $n-1$.
     \end{itemize}
\end{remark}

\section{Constructing the family $\Q=\{Q(i)\}_{i=1}^t$}

%Let $\C$ be a class of objects in $\modu\,(\Lambda).$
%The \emph{$\Lambda$-length of the class} $\C$ is
%$\ell_\Lambda(\C):=\sup\,\{\ell_\Lambda(C)\;:\;C\in
%\C\},$ where $\ell_\Lambda(C)$ stands for the length of the $\Lambda$-module $C.$
 Let $(\Psi,\leq)$ be a proper pre-costratifying system of size
 $t$ in $\modu\,(\Lambda).$ The aim of this section is to prove the existence
of a family of $\Lambda$-modules $\Q=\{Q(i)\}_{i=1}^t$ such that $(\Psi,\Q,\leq)$ is a proper costratifying system. The proof will be done
under the hypothesis that $\ell_\Lambda(\ind(\F(\Psi)))<\infty$. The following lemmas will be used in the sequel.

\begin{lemma}
\label{lema suc. X, Y1+Y2,Z indesc} Let $\varepsilon:0\longrightarrow X\overset{\left( \alpha_1 \atop \alpha_2
\right) }{\longrightarrow} Y_1\oplus Y_2\overset{(\beta_1\ \beta_2 )}{\longrightarrow} Z\longrightarrow0$ be a non-split exact sequence in
$\modu\,(\Lambda)$, where $X$ and $Z$ are indecomposable $\Lambda$-modules and $Y_1\neq 0$, $Y_2\neq 0$. Then
$\beta_1\alpha_1=-\beta_2\alpha_2\neq 0$.
\end{lemma}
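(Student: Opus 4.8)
The relation $\beta_1\alpha_1 = -\beta_2\alpha_2$ is immediate from exactness, since $(\beta_1\ \beta_2)\binom{\alpha_1}{\alpha_2} = \beta_1\alpha_1 + \beta_2\alpha_2 = 0$; so the whole content is the non-vanishing, and I would prove it by contradiction. Suppose $\beta_1\alpha_1 = 0$, so that also $\beta_2\alpha_2 = 0$.

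The key step is to identify $X$ with a direct sum. Since $\Ima\,\alpha_1 \subseteq \Ker\,\beta_1$ and $\Ima\,\alpha_2 \subseteq \Ker\,\beta_2$, the image of $\binom{\alpha_1}{\alpha_2}$ lies in the submodule $\Ker\,\beta_1 \oplus \Ker\,\beta_2$ of $Y_1\oplus Y_2$; conversely $\Ker\,\beta_1 \oplus \Ker\,\beta_2 \subseteq \Ker\,(\beta_1\ \beta_2) = \Ima\binom{\alpha_1}{\alpha_2}$ by exactness of $\varepsilon$. Hence $X \cong \Ima\binom{\alpha_1}{\alpha_2} = \Ker\,\beta_1 \oplus \Ker\,\beta_2$ as submodules of $Y_1\oplus Y_2$. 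One must be slightly careful here that the two inclusions are genuinely between the same submodule, so that this is an honest decomposition of $X$; that is the only delicate point, after which the argument is routine.

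Now $X$ is indecomposable, hence nonzero, so exactly one of $\Ker\,\beta_1$, $\Ker\,\beta_2$ vanishes. By the symmetry of the hypotheses in the two indices, I may assume $\Ker\,\beta_1 = 0$, the case $\Ker\,\beta_2 = 0$ being identical after swapping the roles of $Y_1$ and $Y_2$. Then $\alpha_1 = 0$ (as $\Ima\,\alpha_1 \subseteq \Ker\,\beta_1 = 0$), and $\alpha_2$ restricts to an isomorphism $X \xrightarrow{\sim} \Ker\,\beta_2$. Passing to cokernels of the monomorphism $\binom{0}{\alpha_2}\colon X\to Y_1\oplus Y_2$, whose image is $0\oplus\Ker\,\beta_2$, gives $Z \cong Y_1 \oplus (Y_2/\Ker\,\beta_2)$. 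Since $Z$ is indecomposable and $Y_1\neq 0$, the second summand must vanish, so $\alpha_2(X) = \Ker\,\beta_2 = Y_2$; being also injective, $\alpha_2\colon X\to Y_2$ is then an isomorphism, and $(0\ \alpha_2^{-1})\colon Y_1\oplus Y_2 \to X$ is a retraction of $\binom{\alpha_1}{\alpha_2}$. Thus $\varepsilon$ splits, contrary to hypothesis, and this contradiction shows $\beta_1\alpha_1\neq 0$.

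The expected main obstacle is not serious: it is just the bookkeeping in the key step, ensuring that the containments of submodules of $Y_1\oplus Y_2$ force the decomposition $X = \Ker\,\beta_1\oplus\Ker\,\beta_2$ rather than a mere abstract isomorphism; once that is set up, indecomposability of $X$ and of $Z$, together with $Y_1,Y_2\neq 0$, do the rest, and the hypothesis that $\varepsilon$ is non-split is used precisely to rule out the final degenerate configuration.
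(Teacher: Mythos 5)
Your proof is correct and uses essentially the same strategy as the paper: assume $\beta_1\alpha_1=0$, decompose $X$ as a direct sum, use indecomposability of $X$ to force one summand to vanish, then use indecomposability of $Z$ together with $Y_1,Y_2\neq 0$ to manufacture a splitting of $\varepsilon$, contradicting the hypothesis. The one place where you diverge is in how the key decomposition is obtained and justified: the paper asserts without proof that $X=\Ker(\alpha_1)\oplus\Ker(\alpha_2)$, whereas you identify $X$ with $\Ima\binom{\alpha_1}{\alpha_2}$ inside $Y_1\oplus Y_2$ and show that this image equals $\Ker\beta_1\oplus\Ker\beta_2$, which drops out at once from the two containments $\Ima\binom{\alpha_1}{\alpha_2}\subseteq\Ker\beta_1\oplus\Ker\beta_2$ (from $\beta_i\alpha_i=0$) and $\Ker\beta_1\oplus\Ker\beta_2\subseteq\Ker(\beta_1\ \beta_2)=\Ima\binom{\alpha_1}{\alpha_2}$ (from exactness). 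Under the isomorphism $\binom{\alpha_1}{\alpha_2}\colon X\to\Ima\binom{\alpha_1}{\alpha_2}$ these are in fact the same decomposition, with $\Ker\alpha_1$ corresponding to $0\oplus\Ker\beta_2$ and $\Ker\alpha_2$ to $\Ker\beta_1\oplus 0$; your chosen case $\Ker\beta_1=0$ is the mirror image of the paper's choice $\Ker\alpha_1=0$, so the endgame differs only by swapping the roles of the two indices. Your formulation is slightly preferable in that it actually proves the decomposition that the paper merely claims, and makes it transparent why it holds.
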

\begin{proof} Suppose that $\beta_1\alpha_1=-\beta_2\alpha_2= 0$. Then, it can be proven that $X=\Ker\,(\alpha_1)\oplus\Ker\,(\alpha_2).$ Hence,
using the fact that $X$ is indecomposable, it follows that either $\Ker\,(\alpha_1)=0$ or $\Ker\,(\alpha_2)=0.$ We may assume that
$\Ker\,(\alpha_1)=0$ (the proof for the other case is very similar). Therefore
$X=\Ker\,(\alpha_2)$ and so $\alpha_2=0$. Thus $Z\simeq\Coker\,(\left( \alpha_1 \atop 0 \right))=Y_1/\alpha_1\,(X)\oplus Y_2$. Furthermore, since
$Z$ is indecomposable and $Y_2\neq 0$, we have that $\alpha_1(X)= Y_1$. Hence $\alpha_1$ is an isomorphism and
$(\alpha^{-1}_{1}\ 0)\left( \alpha_1 \atop 0 \right)=1_X$. Therefore $\varepsilon$ splits, which is a contradiction.
\end{proof}

\begin{lemma}\label{lema auxiliar morfismos}
Let $\C$ be a class of objects in $\modu(\Lambda)$ closed under extensions, and let $\{\beta_i:X_i\rightarrow X_{i-1}\}_{i=1}^n$ be a family of epimorphisms in $\modu(\Lambda).$ If $\Ker\,(\beta_i)\in\C$ for all $i\in[1,n],$ then $\Ker(\beta_1\cdots\beta_n)\in\C$.
\end{lemma}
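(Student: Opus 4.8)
The plan is to prove Lemma~\ref{lema auxiliar morfismos} by induction on $n$, reducing the composite of $n$ epimorphisms to a single short exact sequence whose kernel is an extension of objects already known to lie in $\C$. For $n=1$ there is nothing to prove, so assume $n\geq 2$ and that the claim holds for $n-1$. Set $\gamma:=\beta_2\cdots\beta_n:X_n\to X_1$, so that by the inductive hypothesis $\Ker(\gamma)\in\C$, and observe that $\beta_1\cdots\beta_n=\beta_1\gamma$ with $\beta_1:X_1\to X_0$ an epimorphism whose kernel lies in $\C$.

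The heart of the argument is then the case $n=2$ applied to $\beta_1$ and $\gamma$: given epimorphisms $X_1\xrightarrow{\beta_1}X_0$ and $X_2\xrightarrow{\gamma}X_1$ with $\Ker(\beta_1),\Ker(\gamma)\in\C$, I want to show $\Ker(\beta_1\gamma)\in\C$. Since $\gamma$ is surjective, restricting $\gamma$ to $\Ker(\beta_1\gamma)$ gives an epimorphism $\Ker(\beta_1\gamma)\twoheadrightarrow\Ker(\beta_1)$ whose kernel is $\Ker(\beta_1\gamma)\cap\Ker(\gamma)=\Ker(\gamma)$ (the latter equality because $\Ker(\gamma)\subseteq\Ker(\beta_1\gamma)$ automatically). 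This yields a short exact sequence
$$0\longrightarrow\Ker(\gamma)\longrightarrow\Ker(\beta_1\gamma)\longrightarrow\Ker(\beta_1)\longrightarrow 0,$$
and since $\C$ is closed under extensions and both outer terms lie in $\C$, we get $\Ker(\beta_1\gamma)\in\C$, which completes the induction.

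The only point requiring a little care — and the step I expect to be the main obstacle, though it is modest — is verifying that the restriction of $\gamma$ to $\Ker(\beta_1\gamma)$ really does surject onto $\Ker(\beta_1)$: given $x\in\Ker(\beta_1)\subseteq X_1$, pick any preimage $y\in X_2$ with $\gamma(y)=x$ (possible since $\gamma$ is epi), and then $\beta_1\gamma(y)=\beta_1(x)=0$, so $y\in\Ker(\beta_1\gamma)$ and $\gamma(y)=x$. The kernel of this restriction is $\{y\in\Ker(\beta_1\gamma):\gamma(y)=0\}=\Ker(\gamma)$, using that $\Ker(\gamma)\subseteq\Ker(\beta_1\gamma)$. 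Everything else is the standard fact that a subclass closed under extensions absorbs middle terms of short exact sequences with outer terms in the class, which is exactly what the hypothesis on $\C$ provides.
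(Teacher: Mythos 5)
Your proof is correct and follows essentially the same strategy as the paper: induction on $n$, using the short exact sequence $0\to\Ker(f)\to\Ker(gf)\to\Ker(g)\to 0$ for composable epimorphisms together with closure of $\C$ under extensions. The only cosmetic difference is that you peel off the outermost map $\beta_1$ (applying the inductive hypothesis to $\beta_2\cdots\beta_n$), whereas the paper peels off the innermost map $\beta_n$ and displays the resulting exact sequence as a column of a $3\times 3$ commutative diagram.
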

\begin{proof}
We proceed by induction on $n$. It is immediate that the lemma holds for $n=1$.\\
Let $n>1$. Now, we consider the following exact and commutative diagram
$$\begin{CD}
@. 0 @. 0 @. \\
@. @VVV  @VVV @. \\
@. \Ker(\beta_n) @= \Ker(\beta_n) @. @.\\
@. @VVV @VVV @. \\
0 @>>> \Ker(\beta_1\cdots\beta_n) @>>> X_n @>>> X_0 @>>> 0 \\
@. @VVV @VV\beta_nV @| \\
0 @>>> \Ker(\beta_1\cdots\beta_{n-1}) @>>> X_{n-1} @>\beta_1\cdots\beta_{n-1}>> X_0 @>>> 0 \\
@. @VVV  @VVV @. @.\\
@. 0 @. 0. @. @.\\
\end{CD}$$
\noindent By hypothesis $\Ker(\beta_n)\in\mathcal{C}$, and by induction $\Ker(\beta_1\cdots\beta_{n-1})\in\mathcal{C}$. Thus, since $\mathcal{C}$ is closed under extensions, we have that $\Ker(\beta_1\cdots\beta_n)\in\mathcal{C}$ and this finishes the proof.
\end{proof}
\

To carry on the construction of the family $\Q=\{Q(i)\}_{i=1}^t,$ let us start with a family $\Psi=\{\Psi(1)\}$ having just one element,
such that $\End_\Lambda(\Psi(1))$ is a division ring. We need to construct an indecomposable module $Q(1)$ and an exact sequence
$\varepsilon_1:0\rightarrow Z(1)\rightarrow Q(1)\rightarrow\Psi(1)\rightarrow 0$, with $Z(1)\in\F(\{\Psi(1)\}),$ and  such that
$\Ext^{1}_\Lambda(Q(1),\Psi(1))=0$. We will do this through successive extensions in the following way:
\medskip
\begin{itemize}
\item[$\bullet$] If $\Ext^{1}_\Lambda(\Psi(1),\Psi(1))=0,$ we choose $Q(1)=\Psi(1)$.
\medskip
\item[$\bullet$] If $\Ext^{1}_\Lambda(\Psi(1),\Psi(1))\neq 0,$ we consider a non-split exact sequence $ 0\rightarrow \Psi(1)\rightarrow X_1\overset{\beta_1}{\rightarrow} X_0\rightarrow 0,$ where $X_0=\Psi(1).$
In case $\Ext^{1}_\Lambda(X_1,\Psi(1))=0,$ we choose $Q(1)=X_1$. Otherwise, we iterate the above procedure and we find non-split exact
sequences $0\rightarrow \Psi(1)\rightarrow X_{i+1}\overset{\beta_{i+1}}{\rightarrow} X_i\rightarrow 0$ for $i\geq 1$.
Since all $X_i\in\F(\Psi)$, $\ell_\Lambda(X_{i+1})>\ell_\Lambda(X_i)$ and we are assuming that $\ell_\Lambda(\ind(\F(\Psi)))<\infty$, this process must
stop. Thus, there is a natural $n$ such that $\Ext^{1}_\Lambda(X_n,\Psi(1))=0,$ and we choose $Q(1)=X_n$. If $X_n$ is indecomposable,
the exact sequence $0\rightarrow Z(1)\rightarrow X_n\overset{\beta_1\cdots\beta_n}{\longrightarrow}\Psi(1)\rightarrow0,$ with
$Z(1)=\Ker(\beta_1\cdots\beta_n),$ satisfies the required conditions because
$Z(1)\in\F(\Psi)$ (see Lemma \ref{lema auxiliar morfismos}).
\end{itemize}
\indent \indent Let $\F'(\{\Psi(1)\})$ be the family of all the modules $X_i$ which occur
using the above procedure. Then, we have actually proved the following result.

\begin{lemma}
\label{lema caso t=1 exist. sist.estrat.propio}
Let $\Psi(1)\in\modu(\Lambda)$ be such that $\End_\Lambda(\Psi(1))$ is a division ring and $\ell_\Lambda(\ind(\F(\{\Psi(1)\})))<\infty$. Then, there
exists a non-split exact sequence $0\rightarrow Z(1)\rightarrow Q(1)\rightarrow\Psi(1)\rightarrow0$ such that $Z(1)\in\F(\{\Psi(1)\})$ and
$Q(1)\in\leftidx{^{\bot_{1}}}{\Psi(1)}\cap\F'(\{\Psi(1)\})$.
\end{lemma}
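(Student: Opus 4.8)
The plan is to make rigorous the iterative construction already sketched in the paragraph preceding the statement, and then extract exactly the claimed conclusions. I would begin by setting $X_0=\Psi(1)$ and, as long as $\Ext^1_\Lambda(X_i,\Psi(1))\neq 0$, choosing a non-split short exact sequence $0\to\Psi(1)\to X_{i+1}\xrightarrow{\beta_{i+1}}X_i\to 0$. Since $\F(\{\Psi(1)\})$ is closed under extensions, every $X_i$ lies in $\F(\{\Psi(1)\})$; moreover a non-split extension strictly increases length, so $\ell_\Lambda(X_{i+1})>\ell_\Lambda(X_i)$. The hypothesis $\ell_\Lambda(\ind(\F(\{\Psi(1)\})))<\infty$ is what forces termination: once I have observed that each $X_i$ is indecomposable (see below), the lengths $\ell_\Lambda(X_i)$ form a strictly increasing sequence bounded above, so the process must halt at some step $n$ with $\Ext^1_\Lambda(X_n,\Psi(1))=0$. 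Set $Q(1)=X_n$, so $Q(1)\in{}^{\perp_1}\Psi(1)$, and by definition $Q(1)\in\F'(\{\Psi(1)\})$.

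The indecomposability of the $X_i$ is the point requiring care, and I expect it to be the main obstacle; this is exactly where Lemma \ref{lema suc. X, Y1+Y2,Z indesc} enters. I would prove by induction on $i$ that each $X_i$ is indecomposable. The base case $X_0=\Psi(1)$ holds because $\End_\Lambda(\Psi(1))$ is a division ring. For the inductive step, suppose $X_i$ is indecomposable and $X_{i+1}=Y_1\oplus Y_2$ with $Y_1,Y_2\neq 0$; then the non-split sequence $0\to\Psi(1)\to Y_1\oplus Y_2\xrightarrow{(\beta_1'\ \beta_2')}X_i\to 0$ has indecomposable ends $\Psi(1)$ and $X_i$, so Lemma \ref{lema suc. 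X, Y1+Y2,Z indesc} applies and gives $\beta_j'\alpha_j\neq 0$ for $j=1,2$, where $\binom{\alpha_1}{\alpha_2}$ is the inclusion. In particular each $\beta_j'\alpha_j$ is a nonzero endomorphism of $\Psi(1)$, hence an isomorphism because $\End_\Lambda(\Psi(1))$ is a division ring; but then $\alpha_1$ is a split monomorphism, so $\Psi(1)$ is a direct summand of $Y_1$, and tracking this one sees the original sequence splits off $\Psi(1)$ and therefore splits — contradicting non-splitness. (This is the same argument that would appear in the proof of Lemma \ref{lema suc. X, Y1+Y2,Z indesc} itself, specialized.) Hence $X_{i+1}$ is indecomposable, completing the induction, and in particular $Q(1)=X_n$ is indecomposable.

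It remains to produce the short exact sequence with the stated filtration property. Composing, $\beta_1\cdots\beta_n:X_n\to X_0=\Psi(1)$ is an epimorphism (a composite of epimorphisms), so setting $Z(1)=\Ker(\beta_1\cdots\beta_n)$ gives a short exact sequence $0\to Z(1)\to Q(1)\xrightarrow{\beta_1\cdots\beta_n}\Psi(1)\to 0$. Each $\Ker(\beta_i)\simeq\Psi(1)$ lies in the extension-closed class $\C=\F(\{\Psi(1)\})$, so Lemma \ref{lema auxiliar morfismos} yields $Z(1)=\Ker(\beta_1\cdots\beta_n)\in\F(\{\Psi(1)\})$. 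Finally I would check the sequence is non-split: if it split, then $\Psi(1)$ would be a direct summand of the indecomposable module $Q(1)$, forcing $Q(1)\simeq\Psi(1)$ and $Z(1)=0$, i.e. $n=0$; but in the case $n=0$ we are already in the branch $\Ext^1_\Lambda(\Psi(1),\Psi(1))=0$ and take $Q(1)=\Psi(1)$, where the "non-split sequence" statement is understood with $Z(1)=0$ and the sequence $0\to 0\to\Psi(1)\to\Psi(1)\to 0$ — so one should either treat this degenerate case separately or note that when $n\ge 1$ the sequence is genuinely non-split because $Z(1)\neq 0$ and $Q(1)$ is indecomposable. Putting the pieces together gives $Q(1)\in{}^{\perp_1}\Psi(1)\cap\F'(\{\Psi(1)\})$ together with the desired exact sequence, which is precisely the assertion of the lemma.
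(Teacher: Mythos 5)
Your construction and the use of Lemma \ref{lema auxiliar morfismos} to put $Z(1)\in\F(\{\Psi(1)\})$ match the paper exactly; the paper's ``proof'' of this lemma is precisely that construction, with the indecomposability of the $X_i$ deferred (``So, we only need to show that the module $X_n$ constructed above is indecomposable''). Where your proposal goes wrong is in the inline indecomposability argument you supply. In your inductive step you assert that ``each $\beta_j'\alpha_j$ is a nonzero endomorphism of $\Psi(1)$, hence an isomorphism because $\End_\Lambda(\Psi(1))$ is a division ring.'' But $\alpha_j\colon\Psi(1)\to Y_j$ and $\beta_j'\colon Y_j\to X_i$, so $\beta_j'\alpha_j$ is a morphism $\Psi(1)\to X_i$; it is an endomorphism of $\Psi(1)$ only when $i=0$. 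For $i\geq 1$ the target $X_i$ is strictly bigger than $\Psi(1)$, and the division-ring hypothesis gives you nothing. Thus your induction establishes indecomposability of $X_1$ but not of $X_2,X_3,\dots$, and in particular not of $Q(1)=X_n$. This gap also undermines your termination argument: the bound $\ell_\Lambda(\ind(\F(\Psi)))<\infty$ controls the lengths of the $X_i$ only if you already know they are indecomposable, so without a correct indecomposability proof you have not shown the iteration halts.

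The paper fills this gap with substantially more machinery: the $\mathbb{M}_{\Psi(\lambda)}$ property (Proposition \ref{lema propiedad M-lambda}), the cokernel analysis of Proposition \ref{lema X en F',Xindescomponible}, and finally Proposition \ref{X en F'es Xindescomponible}. There, after invoking Lemma \ref{lema suc. X, Y1+Y2,Z indesc}, the nonzero composites $\beta'\alpha',\beta''\alpha''\colon\Psi(i_0)\to Z$ are shown to be \emph{monomorphisms} (via $\mathbb{M}_{\Psi(i_0)}$, not isomorphisms), and then a length count together with an induction on the length of ordered $\Psi$-filtrations and the indecomposability of the cokernel $Z/\beta'\alpha'(\Psi(i_0))$ yields the contradiction. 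That argument does not collapse to the division-ring step you use. Your remark about the degenerate case $n=0$ (where the sequence is $0\to 0\to\Psi(1)\to\Psi(1)\to 0$) is a fair observation about an imprecision the paper also leaves implicit, but it is a side issue; the real defect is the misidentified endomorphism.
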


So, we only need to show that the module $X_n$ constructed above is indecomposable. To do this, we will prove that each $X_i$ in
$\F'(\{\Psi(1)\})$ is indecomposable. More generally, for a family $\Psi=\{\Psi(1),\cdots,\Psi(t)\}$, we next define the class
$\F'(\Psi)$ and later, in Proposition \ref{X en F'es Xindescomponible}, we will prove that the  modules in $\F'(\Psi)$ are indecomposable.

\begin{defin}
\label{defin. de la clase F`}
Let $(\Psi,\leq)$ be a proper pre-costratifying system  of size $t$ in $\modu\,(\Lambda).$ For
each natural $n,$ we inductively define the class
$\F'_n(\Psi)$ as follows:
\begin{enumerate}
\item[(a)] $\F'_1(\Psi)=\Psi$, and
\item[(b)] suppose $n>1$ and $\F'_{n-1}(\Psi)$ is already defined. Then $X\in \F'_n(\Psi)$ if and only if either $X\in\F'_{n-1}(\Psi)$ or $X$
admits an ordered $\Psi$-filtration of length $n$ and a non-split exact sequence
$$\varepsilon_n\;:\quad 0\rightarrow\Psi(\min\,(X))\rightarrow X\rightarrow X'\rightarrow0\text{ with }X'\in\F'_{n-1}(\Psi).$$
\end{enumerate}
We set $\F'(\Psi)=\cup_{i\geq 1}\,\F'_i(\Psi)$.
\end{defin}

\begin{remark}\label{propiedadesF'}
\rm
\begin{enumerate}
\item Observe that $\F'_1(\Psi)\subseteq\F'_2(\Psi)\subseteq\cdots\subseteq\F'_n(\Psi)\subseteq
\F(\Psi),$ for any $n$.

\item In the above definition $\varepsilon_n$ is a sequence in $\F(\{\Psi(j):j\geq \mini\,(X)\})$, and $X'$ satisfies $\mini\,(X')\geq\mini\,(X)$, as follows from Lemma \ref{LemmaDesigualdad}.
\end{enumerate}
\end{remark}
%Let $(\Psi,\leq)$ be a proper pre-costratifying system of size $t$ in $\modu\,(\Lambda),$ $X\in \F'_n(\Psi)$
%with $n>1,$ and let $\varepsilon_n$ be the non-split exact sequence given in \ref{defin. de la clase F`}. Then $\varepsilon_n$ lives in
%$\F(\{\Psi(j):j\geq \mini\,(X)\})$ and $\mini\,(X')\geq\mini\,(X).$
%\end{remark}
%\begin{proof} Since the exact sequence $\varepsilon_n$ does not split, it follows that $X'\neq 0.$ So, by taking an ordered filtration of $X',$ we
%get a monomorphism $\Psi(\mini\,(X'))\to X'.$ Therefore, from \ref{LemmaDesigualdad} we conclude that $\mini\,(X')\geq\mini\,(X),$ and so the
%result follows.
%\end{proof}

For a proper pre-costratifying system $(\Psi,\leq)$ in $\modu\,(\Lambda),$ we will prove that any module $X$ in $\F'(\Psi)$ is
indecomposable. The proof will be done by induction on $n$ such that $X$ admits an ordered $\Psi$-filtration of length $n$. To do this,
we will use that $\Coker\,(\eta)$ belongs to $\F'(\{\Psi(j):j\geq \text{\rm min}(X)\})\cup\{0\}$,
for any $Y\in\F'(\{\Psi(j):j\geq \text{\rm min}(X)\})$ and any arbitrary monomorphism $\eta:\Psi(\text{\rm min}(X))\rightarrow Y$. So, we start by studying further properties of monomorphisms with domain $\Psi(i)$ for some $i\in[1,t]$.

\begin{defin} For any $K\in\modu\,(\Lambda),$ we consider the class $\mathbb{M}_K$ of all $X\in\modu(\Lambda)$ satisfying the following
property: $$\textrm{For all}\,f\in\Hom_\Lambda(K,X),\quad f \textrm{ is either zero or a monomorphism}.$$
\end{defin}

\begin{proposition}\label{lema suc. exacta X,Y,Z y Z con propiedad M_i}
Let $0\rightarrow K\overset{\alpha}{\longrightarrow} X\overset{\beta}{\longrightarrow} Z\rightarrow0$ be an exact sequence
in $\modu\,(\Lambda),$ and let $L\in\modu(\Lambda).$ Then, the following statements hold.
\begin{enumerate}
 \item[(a)] If $K\in\mathbb{M}_L$ and $Z\in\mathbb{M}_L$, then $X\in\mathbb{M}_L.$
 \item[(b)] If $\Hom_\Lambda(L,K)=0$ and $\eta:L\rightarrow X$ is non-zero, then $\beta\eta\neq 0.$
 \item[(c)] If $Z\in\mathbb{M}_L$ and $\eta:L\rightarrow X$ is such that $\beta\eta\neq 0,$ then $\eta\,(L)\cap\alpha\,(K)=0.$
 \end{enumerate}
\end{proposition}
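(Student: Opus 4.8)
The plan is to prove the three statements (a), (b), (c) essentially independently, since each is a small diagram-chase around the short exact sequence $0\to K\xrightarrow{\alpha}X\xrightarrow{\beta}Z\to 0$.

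For (a), I would take an arbitrary nonzero $f\in\Hom_\Lambda(L,X)$ and show it is a monomorphism. First I consider $\beta f\in\Hom_\Lambda(L,Z)$. Since $Z\in\mathbb{M}_L$, $\beta f$ is either zero or a monomorphism. If $\beta f$ is a monomorphism, then $f$ is a monomorphism and we are done. If $\beta f=0$, then $f$ factors through $\Ker\beta=\alpha(K)$, so there is $g\in\Hom_\Lambda(L,K)$ with $f=\alpha g$; since $\alpha$ is a monomorphism and $f\neq 0$ we get $g\neq 0$, hence $g$ is a monomorphism because $K\in\mathbb{M}_L$, and therefore $f=\alpha g$ is a monomorphism. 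This handles (a).

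For (b), suppose $\Hom_\Lambda(L,K)=0$ and $\eta:L\to X$ is nonzero; if $\beta\eta=0$ then $\eta$ factors through $\alpha(K)$, giving a nonzero map $L\to K$, contradicting $\Hom_\Lambda(L,K)=0$. For (c), assume $Z\in\mathbb{M}_L$ and $\beta\eta\neq 0$. Set $W:=\eta(L)\cap\alpha(K)$ and note $\beta$ vanishes on $\alpha(K)$, hence on $W$. I want $W=0$. Consider the restriction $\eta':\eta^{-1}(W)\to X$, or more cleanly: let $\pi:\eta^{-1}(W)\hookrightarrow L$ and observe $\beta\eta|_{\eta^{-1}(W)}=0$; since $\beta\eta:L\to Z$ is nonzero and $Z\in\mathbb{M}_L$, the map $\beta\eta$ must be a monomorphism, so its kernel $\eta^{-1}(\Ker\beta)=\eta^{-1}(\alpha(K))$ is zero. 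But $W\subseteq\alpha(K)$ gives $W=\eta(\eta^{-1}(W))\subseteq\eta(\eta^{-1}(\alpha(K)))=\eta(0)=0$, so $\eta(L)\cap\alpha(K)=0$.

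The main (mild) obstacle is purely bookkeeping: in (c) one must be careful that $\eta$ need not itself be injective, so the argument is phrased in terms of preimages under $\eta$ rather than images, using the $\mathbb{M}_L$-property precisely to force $\beta\eta$ to be a monomorphism once it is nonzero. No deeper input from the theory of proper pre-costratifying systems is needed; only the defining property of $\mathbb{M}_L$ and exactness of the given sequence are used.
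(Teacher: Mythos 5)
Your proof is correct and follows essentially the same route as the paper's: part (a) is verbatim the paper's case split on $\beta f$, part (b) is the same factorization argument the paper leaves as ``straightforward,'' and part (c) uses the same key observation that $Z\in\mathbb{M}_L$ forces $\beta\eta$ to be a monomorphism with $\ker(\beta\eta)=\eta^{-1}(\alpha(K))=0$; the paper phrases this as a direct element chase ($\eta(x)=\alpha(y)\Rightarrow\beta\eta(x)=0\Rightarrow x=0$) while you phrase it via preimages and the identity $W=\eta(\eta^{-1}(W))$ for $W\subseteq\eta(L)$, but this is only a cosmetic difference.
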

\begin{proof}
(a) Assume that $K\in\mathbb{M}_L$ and $Z\in\mathbb{M}_L,$ and let $0\neq f:L\rightarrow X$. We next prove that $f$ is a monomorphism. Indeed, if $\beta f=0,$ then there exists $f':L\to K$ such that $f=\alpha f'.$ Hence $f'\neq 0$
and so $f'$ is a monomorphism, getting in this case that $f$ is a monomorphism. On the other hand, in case $\beta f:L\to Z$  is non-zero, it follows that $\beta f$ is a monomorphism, and then $f$ is so.
\

(b) The proof is straightforward.
%Let $\Hom_\Lambda(L,K)=0$  and let $\eta:L\rightarrow X$ be non-zero. If $\beta\eta:L\rightarrow Z$ is zero,
%then there exists $\eta':L\to K$ such that $\eta=\alpha\eta'.$ Therefore $\eta=0$ since $\eta'\in\Hom_\Lambda(L,K)=0,$ a contradiction,
%getting us that $\beta\eta\neq 0.$
\

(c) Let $Z\in\mathbb{M}_L$ and $\eta:L\rightarrow X$ be such that $0\neq\beta\eta:L\to Z.$ In particular, since $Z\in\mathbb{M}_L,$ it
follows that $\beta\eta$ is a monomorphism.\\
We now prove that $\eta\,(L)\cap\alpha\,(K)=0$. Let $\lambda=\eta\,(x)=\alpha\,(y)$ with $x\in L$ and $y\in K$. Then
$\beta\eta\,(x)=\beta\alpha\,(y)=0$ and since $\beta\eta$ is a monomorphism, we conclude that $x=0$ and hence $\lambda=0$. Therefore
$\eta\,(L)\cap\alpha\,(K)=0$.
\end{proof}

\begin{proposition}\label{lema propiedad M-lambda}
 Let $(\Psi,\leq)$ be a proper pre-costratifying system of size $t$ in $\modu(\Lambda)$. If $X\in\F(\Psi)$ and $\lambda\leq\text{\rm min}(X)$, then $X\in\mathbb{M}_{\Psi(\lambda)}.$
\end{proposition}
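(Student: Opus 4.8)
The plan is to argue by induction on the length $n$ of an ordered $\Psi$-filtration of $X$, which exists by the Remark following Definition \ref{def de pre-sist. estricto}.

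First I would record a preliminary observation: for any $i\in[1,t]$ and any $\lambda\leq i$ one has $\Psi(i)\in\mathbb{M}_{\Psi(\lambda)}$. Indeed, if $\lambda<i$ then $\Hom_\Lambda(\Psi(\lambda),\Psi(i))=0$ by condition (b) of Definition \ref{def de pre-sist. estricto}, so there is nothing to check; and if $\lambda=i$, then since $\End_\Lambda(\Psi(i))$ is a division ring by condition (a), every non-zero endomorphism of $\Psi(i)$ is an isomorphism, in particular a monomorphism. This settles the base case $n=1$, where $X\simeq\Psi(\mini(X))$ and $\lambda\leq\mini(X)$.

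For the inductive step, let $n>1$ and assume the statement holds for all modules in $\F(\Psi)$ admitting an ordered $\Psi$-filtration of length less than $n$. Let $X\in\F(\Psi)$ have an ordered $\Psi$-filtration of length $n$ and let $\lambda\leq\mini(X)$. By Remark \ref{obs. sobre el minimo de X} there is an exact sequence
$$0\longrightarrow\Psi(\mini(X))\overset{\alpha}{\longrightarrow} X\overset{\beta}{\longrightarrow} X'\longrightarrow 0$$
in $\F(\Psi)$ with $\mini(X')\geq\mini(X)$ and such that $X'$ admits an ordered $\Psi$-filtration of length $n-1$. Since $\lambda\leq\mini(X)\leq\mini(X')$, the inductive hypothesis yields $X'\in\mathbb{M}_{\Psi(\lambda)}$; and the preliminary observation, applied with $i=\mini(X)$, gives $\Psi(\mini(X))\in\mathbb{M}_{\Psi(\lambda)}$. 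Now Proposition \ref{lema suc. exacta X,Y,Z y Z con propiedad M_i}(a), applied to the displayed sequence with $L=\Psi(\lambda)$, gives $X\in\mathbb{M}_{\Psi(\lambda)}$, completing the induction.

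I do not expect a serious obstacle here; the only points requiring care are the bookkeeping with $\mini$ — in particular using $\mini(X')\geq\mini(X)$ to be able to feed the inductive hypothesis with the same $\lambda$ — and invoking the correct instance of Proposition \ref{lema suc. exacta X,Y,Z y Z con propiedad M_i}(a). The existence of the exact sequence with the required length and minimum properties is exactly what Remark \ref{obs. sobre el minimo de X} supplies.
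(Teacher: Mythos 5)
Your proof is correct and follows essentially the same strategy as the paper: induction on the length of an ordered $\Psi$-filtration, using the exact sequence from Remark \ref{obs. sobre el minimo de X} together with Proposition \ref{lema suc. exacta X,Y,Z y Z con propiedad M_i}(a). The only (minor, harmless) difference is that the paper splits the inductive step into the cases $\lambda<\mini(X)$ (dispatched immediately via Remark \ref{compaginamientoFiltrados}) and $\lambda=\mini(X)$, whereas your preliminary observation that $\Psi(i)\in\mathbb{M}_{\Psi(\lambda)}$ for all $\lambda\leq i$ lets you handle both uniformly.
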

\begin{proof}
Let $X\in\F(\Psi)$. We write $\text{\rm min}(X)=i_0$ for short and consider $\lambda\leq i_0$. We proceed by induction on $n$ such that $X$ admits an ordered $\Psi$-filtration of length $n$.\\
If $n=0,$ we have that $X=0$ and so $X\in\mathbb{M}_{\Psi(\lambda)}.$ On the other hand, if $n=1$ then $X\simeq\Psi(i_0) $ and $\lambda\leq i_0.$
Hence, the result follows directly from (a) and (b) of Definition \ref{def de pre-sist. estricto}.\\
Let $n>1$ and suppose that $X$ admits an ordered $\Psi$-filtration of length $n$. If $\lambda <i_0$
then by Remark \ref{compaginamientoFiltrados}, we get that $\Hom_\Lambda(\Psi(\lambda),X)=0$ and
so $X\in\mathbb{M}_{\Psi(\lambda)}.$ Assume now that $\lambda=i_0$. By Remark
\ref{obs. sobre el minimo de X}, we know that there exists an exact sequence
$$\varepsilon\;:\quad 0\rightarrow\Psi(i_0)\overset{\alpha}{\longrightarrow} X\overset{\beta}{\longrightarrow} X'\rightarrow 0$$
with $\text{\rm min}(X')\geq i_0$ and such that $X'$ has an ordered $\Psi$-filtration of length $n-1$. Then, by induction, $X'\in\mathbb{M}_{\Psi(\lambda)}$.  Now, by applying Proposition \ref{lema suc. exacta X,Y,Z y Z con propiedad M_i} (a) to $\varepsilon$, we obtain that $X\in\mathbb{M}_{\Psi(\lambda)}$.
\end{proof}

\begin{proposition}
\label{lema X en F',Xindescomponible}
 Let $(\Psi,\leq)$ be a proper pre-costratifying system of size $t$ in $\modu\,(\Lambda)$ and $n\in\mathbb{N}$. Then, for any $X\in\F'_n(\Psi)$ and any non-zero morphism $\eta:\Psi(\text{\rm min}(X))\rightarrow X$, we have that $X/\eta(\Psi(\text{\rm min}(X)))\in\F'_{n-1}(\Psi)\cup\{0\}$.
\end{proposition}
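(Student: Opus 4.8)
The plan is to induct on $n$. For $n=1$ we have $X\in\F'_1(\Psi)=\Psi$, so $X=\Psi(\mini\,(X))$, and any non-zero $\eta\colon\Psi(\mini\,(X))\to X$ is an isomorphism (since $\End_\Lambda(\Psi(i))$ is a division ring by condition (a) of Definition \ref{def de pre-sist. estricto}); hence $X/\eta(\Psi(\mini\,(X)))=0\in\F'_0(\Psi)\cup\{0\}$, interpreting $\F'_0(\Psi)=\{0\}$. Now suppose $n>1$ and the statement holds for $n-1$. Write $i_0=\mini\,(X)$. If $X\in\F'_{n-1}(\Psi)$ then the conclusion follows from the inductive hypothesis together with Remark \ref{propiedadesF'}(1). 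So we may assume $X\in\F'_n(\Psi)\setminus\F'_{n-1}(\Psi)$; by Definition \ref{defin. de la clase F`}(b), $X$ admits a non-split exact sequence
$$\varepsilon_n\;:\quad 0\to\Psi(i_0)\overset{\alpha}{\to}X\overset{\pi}{\to}X'\to0,\qquad X'\in\F'_{n-1}(\Psi),$$
and, by Remark \ref{propiedadesF'}(2) and Proposition \ref{MinimosCoinciden}, $X\in\F(\{\Psi(j):j\geq i_0\})$ with $\mini\,(X')\geq i_0$.

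The key step is to relate an arbitrary non-zero $\eta\colon\Psi(i_0)\to X$ to the distinguished monomorphism $\alpha$. First, by Proposition \ref{lema propiedad M-lambda} applied with $\lambda=i_0=\mini\,(X)$, we get $X\in\mathbb{M}_{\Psi(i_0)}$, so $\eta$ is automatically a monomorphism; thus $\eta(\Psi(i_0))\cong\Psi(i_0)$ and the quotient $X/\eta(\Psi(i_0))$ makes sense and has an ordered $\Psi$-filtration of length $n-1$ by the usual filtration-rearrangement argument. Now consider $q:=\pi\eta\colon\Psi(i_0)\to X'$. If $q\neq0$, then since $X'\in\mathbb{M}_{\Psi(i_0)}$ (again Proposition \ref{lema propiedad M-lambda}, as $\mini\,(X')\geq i_0$), $q$ is a monomorphism, and Proposition \ref{lema suc. exacta X,Y,Z y Z con propiedad M_i}(c) applied to $\varepsilon_n$ and $\eta$ gives $\eta(\Psi(i_0))\cap\alpha(\Psi(i_0))=0$; counting lengths in $X$ forces $X\cong\eta(\Psi(i_0))\oplus\alpha(\Psi(i_0))$ on a submodule level and, combined with the snake lemma applied to the inclusion $\eta(\Psi(i_0))\hookrightarrow X\twoheadrightarrow X'$, identifies $X/\eta(\Psi(i_0))$ with $X'$ via $\pi$ (since $\pi$ restricted to $\eta(\Psi(i_0))$ is the mono $q$, the induced map $X/\eta(\Psi(i_0))\to X'/q(\Psi(i_0))$ has kernel and cokernel controlled by $\varepsilon_n$; more precisely $\pi$ itself induces an iso $X/\eta(\Psi(i_0))\xrightarrow{\sim}X'$ when $\eta(\Psi(i_0))\oplus\alpha(\Psi(i_0))=X$, which holds here by the length count). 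Hence $X/\eta(\Psi(i_0))\cong X'\in\F'_{n-1}(\Psi)$, as desired.

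If instead $q=\pi\eta=0$, then $\eta$ factors as $\eta=\alpha\eta'$ for some $\eta'\colon\Psi(i_0)\to\Psi(i_0)$ with $\eta'\neq0$ (as $\eta\neq0$ and $\alpha$ is mono); since $\End_\Lambda(\Psi(i_0))$ is a division ring, $\eta'$ is an isomorphism, so $\eta(\Psi(i_0))=\alpha(\Psi(i_0))$ and therefore $X/\eta(\Psi(i_0))=X/\alpha(\Psi(i_0))\cong X'\in\F'_{n-1}(\Psi)$ directly from $\varepsilon_n$. This exhausts both cases. The main obstacle I anticipate is the $q\neq0$ case: making the length-count argument airtight to conclude $\eta(\Psi(i_0))\oplus\alpha(\Psi(i_0))=X$ (rather than merely a proper submodule) and then that $\pi$ descends to an isomorphism $X/\eta(\Psi(i_0))\xrightarrow{\sim}X'$; one should verify via the snake lemma that the composite $\Psi(i_0)\xrightarrow{\eta}X\xrightarrow{\pi}X'$ being a split mono (its cokernel in $X'$ again lying in $\F(\{\Psi(j):j\geq i_0\})$, hence having an ordered filtration) forces the length of $X/\eta(\Psi(i_0))$ to equal that of $X'$, and then that the natural surjection between them is an isomorphism.
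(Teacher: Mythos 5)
Your handling of the case $q=\pi\eta=0$ (factorization through $\alpha$, using that $\End_\Lambda(\Psi(i_0))$ is a division ring) is correct and agrees with the paper. But the case $q\neq 0$ — which you yourself flag as the main obstacle — contains a genuine gap: the claim that a length count forces $X=\eta(\Psi(i_0))\oplus\alpha(\Psi(i_0))$ is false. From $\varepsilon_n$ we have $\ell(X)=\ell(\Psi(i_0))+\ell(X')$, while $\ell(\eta(\Psi(i_0))\oplus\alpha(\Psi(i_0)))=2\ell(\Psi(i_0))$; these agree only if $\ell(X')=\ell(\Psi(i_0))$, which is certainly not the case in general. Moreover, since $\pi$ does not vanish on $\eta(\Psi(i_0))$ (indeed $q=\pi\eta$ is a monomorphism), $\pi$ does not descend to a map $X/\eta(\Psi(i_0))\to X'$, so the asserted isomorphism $X/\eta(\Psi(i_0))\xrightarrow{\sim}X'$ is not even well posed. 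Equality of lengths, which does hold, does not give an isomorphism.

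The paper's argument for this case is structurally different: from $\eta(\Psi(i_0))\cap\alpha(\Psi(i_0))=0$ and the monomorphism $\beta\eta$ it assembles a $3\times 3$ commutative diagram whose bottom row is a short exact sequence
$$\varepsilon'\colon\;0\to\Psi(i_0)\to X/\eta(\Psi(i_0))\to X'/\beta\eta(\Psi(i_0))\to 0,$$
so that $X/\eta(\Psi(i_0))$ is realized as an extension of $X'/\beta\eta(\Psi(i_0))$ by $\Psi(i_0)$ — not as $X'$ itself. The inductive hypothesis is then applied to $X'$ and the morphism $\beta\eta$ to get $X'/\beta\eta(\Psi(i_0))\in\F'_{n-2}(\Psi)\cup\{0\}$; in the nonzero case one checks that $\varepsilon'$ is non-split (using that $\varepsilon_n$ is) and that $X/\eta(\Psi(i_0))$ has an ordered $\Psi$-filtration of length $n-1$, giving $X/\eta(\Psi(i_0))\in\F'_{n-1}(\Psi)$. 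Your proof should be repaired by replacing the direct-sum/length argument with this second application of the inductive hypothesis via the $3\times 3$ diagram.
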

\begin{proof}
Let us consider $X\in\F'_n(\Psi)$, $i_0=\text{\rm min}(X)$ and a non-zero morphism $\eta:\Psi(i_0)\rightarrow X$. We proceed by induction on $n$.\\
If $n=1$ then $X=\Psi(i_0)$ and hence $\eta$ is an isomorphism, so
$X/\eta(\Psi(i_0))=0$.\\
Let $n>1$. If $X\in\F'_{n-1}(\Psi),$ then by induction we get that
$X/\eta(\Psi(\text{\rm min}(X)))\in\F'_{n-2}(\Psi)\cup\{0\}\subseteq\F'_{n-1}(\Psi)\cup\{0\}.$
Otherwise, there exists a non-split exact sequence
$$\varepsilon\;:\quad 0\rightarrow\Psi(i_0)\overset{\alpha}{\longrightarrow} X\overset{\beta}{\longrightarrow} X'\rightarrow 0,$$
with $X'\in\F'_{n-1}(\Psi).$ By Remark \ref{propiedadesF'} (2), we know that $\text{min}(X')\geq i_0$. Thus, by Proposition \ref{lema propiedad M-lambda}, we conclude that $X'\in\mathbb{M}_{\Psi(i_0)}$. If $\alpha(\Psi(i_0))=\eta(\Psi(i_0))$ then
$X/\eta(\Psi(i_0))\simeq X'\in\F'_{n-1}(\Psi)$.\\
Let $\alpha(\Psi(i_0))\neq\eta(\Psi(i_0))$. We assert that $0\neq\beta\eta:\Psi(i_0)\to X'.$ Indeed, if $\beta\eta=0,$ there exists
an $\eta':\Psi(i_0)\to\Psi(i_0)$ such that $\eta=\alpha\eta',$ hence $\eta'$ is an isomorphism since $\eta'\neq 0.$ Therefore
$\eta(\Psi(i_0))=\alpha(\eta'(\Psi(i_0)))=\alpha(\Psi(i_0)),$ contradicting our hypothesis and proving that $\beta\eta\neq 0.$ So, from $X'\in\mathbb{M}_{\Psi(i_0)}$ and Proposition \ref{lema suc. exacta X,Y,Z y Z con propiedad M_i} (c), we
conclude that $\beta\eta:\Psi(i_0)\to X'$ is a monomorphism and $\alpha(\Psi(i_0))\cap\eta(\Psi(i_0))=0$. Therefore, we get the following
exact and commutative diagram
$$\begin{CD}
@. @. 0 @. 0  @. \\
@. @. @VVV @VVV @. \\
{}\;\;\;0 @>>> 0 @>>> \Psi(i_0) @>>> \beta\eta(\Psi(i_0)) @>>> 0\\
@. @VVV @VV{\eta}V @VVV @.\\
\varepsilon\;:\; 0 @>>> \Psi(i_0)  @>{\alpha}>> X @>{\beta}>> X' @>>> 0 \\
@. @| @VVV @VVV @.\\
\varepsilon'\;:\; 0 @>>> \Psi(i_0)  @>>> X/\eta(\Psi(i_0)) @>>> X'/\beta\eta(\Psi(i_0)) @>>> 0 \\
@. @. @VVV @VVV @. \\
@. @. 0 @. 0 @.\, .
\end{CD}$$
Since $X'\in\F'_{n-1}(\Psi),$ we get by induction that $X'/\beta\eta(\Psi(i_0))\in\F'_{n-2}(\Psi)\cup\{0\}$. If $X'/\beta\eta(\Psi(i_0))=0$ then $X/\eta(\Psi(i_0))\simeq\Psi(i_0)\in\F'_1(\Psi)\subseteq\F'_{n-1}(\Psi)$. In case $X'/\beta\eta(\Psi(i_0))\in\F'_{n-2}(\Psi)$, since $X'/\beta\eta(\Psi(i_0))$ has an ordered $\Psi$-filtration of length $n-2$, we obtain that $X/\eta(\Psi(i_0))$ has an ordered $\Psi$-filtration of length $n-1$ (see $\varepsilon'$). Moreover, the sequence $\varepsilon'$ does not split (use the non-split sequence $\varepsilon$),
and therefore $X/\eta(\Psi(i_0))\in\F'_{n-1}(\Psi)$.
\end{proof}

\begin{proposition}
\label{X en F'es Xindescomponible}
 Let $(\Psi,\leq)$ be a proper pre-costratifying system of size $t$ in $\modu\,(\Lambda).$ Then, any  $\Lambda$-module in $\F'(\Psi)$ is indecomposable.
\end{proposition}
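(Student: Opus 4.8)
The plan is to argue by induction on $n$ such that $X \in \F'_n(\Psi)$, using Proposition \ref{lema X en F',Xindescomponible} as the crucial tool. For $n=1$ we have $X = \Psi(i)$ for some $i$, which is indecomposable because $\End_\Lambda(\Psi(i))$ is a division ring (Definition \ref{def de pre-sist. estricto}(a)). So suppose $n>1$ and all modules in $\F'_{n-1}(\Psi)$ are indecomposable. If $X \in \F'_{n-1}(\Psi)$ we are done, so assume $X \in \F'_n(\Psi) \setminus \F'_{n-1}(\Psi)$. Then there is a non-split exact sequence
$$\varepsilon\;:\quad 0 \to \Psi(i_0) \overset{\alpha}{\to} X \overset{\beta}{\to} X' \to 0,$$
with $i_0 = \mini(X)$ and $X' \in \F'_{n-1}(\Psi)$, hence $X'$ indecomposable by the inductive hypothesis.

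Now I would suppose for contradiction that $X = X_1 \oplus X_2$ with $X_1 \neq 0 \neq X_2$, and write $\alpha = \binom{\alpha_1}{\alpha_2}$ and $\beta = (\beta_1\ \beta_2)$ accordingly, so that $\varepsilon$ becomes the non-split sequence $0 \to \Psi(i_0) \to X_1 \oplus X_2 \to X' \to 0$ with $\Psi(i_0)$ and $X'$ indecomposable. By Lemma \ref{lema suc. X, Y1+Y2,Z indesc}, we get $\beta_1\alpha_1 = -\beta_2\alpha_2 \neq 0$; in particular both $\alpha_1 \neq 0$ and $\alpha_2 \neq 0$. Since $i_0 = \mini(X)$ and $X \in \F(\Psi)$, Proposition \ref{lema propiedad M-lambda} gives $X \in \mathbb{M}_{\Psi(i_0)}$; but $\mathbb{M}$ passes to direct summands trivially (a nonzero map into $X_j$ composed with the inclusion is a nonzero map into $X$, hence a monomorphism, hence the original map is a monomorphism), so $X_1, X_2 \in \mathbb{M}_{\Psi(i_0)}$. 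Applying this to the nonzero maps $\alpha_1 : \Psi(i_0) \to X_1$ and $\alpha_2 : \Psi(i_0) \to X_2$, both are monomorphisms.

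Next I would bring in Proposition \ref{lema X en F',Xindescomponible}. The composite $\alpha_1 : \Psi(i_0) \to X_1 \hookrightarrow X$ is a nonzero morphism $\Psi(\mini(X)) \to X$ with $X \in \F'_n(\Psi)$, so $X/\alpha_1(\Psi(i_0)) \in \F'_{n-1}(\Psi) \cup \{0\}$. But $X/\alpha_1(\Psi(i_0)) = \bigl(X_1/\alpha_1(\Psi(i_0))\bigr) \oplus X_2$, and $X_2 \neq 0$, so this quotient is nonzero and decomposable unless $X_1/\alpha_1(\Psi(i_0)) = 0$; since every module in $\F'_{n-1}(\Psi)$ is indecomposable, we must have $X_1 = \alpha_1(\Psi(i_0))$, i.e. $\alpha_1$ is an isomorphism. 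Symmetrically $\alpha_2$ is an isomorphism. But then $(\alpha_1^{-1}\ 0)\binom{\alpha_1}{\alpha_2} = 1_{\Psi(i_0)}$ shows $\alpha$ is a split monomorphism, contradicting the fact that $\varepsilon$ is non-split. Hence $X$ is indecomposable.

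The main obstacle I anticipate is the bookkeeping of when the relevant quotients and kernels land in $\F'_{n-1}(\Psi)$ versus merely $\F'_{n-1}(\Psi) \cup \{0\}$: the argument genuinely uses that a \emph{nonzero} module in $\F'_{n-1}(\Psi)$ is indecomposable, so I must be careful that the quotient $X/\alpha_1(\Psi(i_0))$ is forced to be zero rather than just indecomposable. The observation that $\mathbb{M}_K$ is closed under direct summands, and that a direct-sum decomposition of $X$ induces one of the quotient $X/\alpha_1(\Psi(i_0))$ with the summand $X_2$ surviving intact, is what makes the contradiction go through cleanly; everything else is a routine assembly of Lemma \ref{lema suc. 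X, Y1+Y2,Z indesc}, Proposition \ref{lema propiedad M-lambda}, and Proposition \ref{lema X en F',Xindescomponible}.
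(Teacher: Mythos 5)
Your proof is correct and uses the same toolbox as the paper (induction on $n$, the assumed decomposition $X = X_1 \oplus X_2$, Lemma \ref{lema suc. X, Y1+Y2,Z indesc}, Proposition \ref{lema propiedad M-lambda}, Proposition \ref{lema X en F',Xindescomponible}), but routes the contradiction more economically. The paper applies Proposition \ref{lema X en F',Xindescomponible} to the cokernel $Z = X'$ via the map $\beta_1\alpha_1:\Psi(i_0)\to Z$, then constructs an induced epimorphism $\overline{\beta}: X_1/\alpha_1(\Psi(i_0)) \oplus X_2/\alpha_2(\Psi(i_0)) \to Z/\beta_1\alpha_1(\Psi(i_0))$, proves $\overline{\beta}$ is an isomorphism by a length computation, separately argues $Z/\beta_1\alpha_1(\Psi(i_0)) \neq 0$, and only then deduces that one of the two summands vanishes. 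You instead apply Proposition \ref{lema X en F',Xindescomponible} directly to $X$ with $\eta = \iota_1\alpha_1$, whereupon the quotient $X/\eta(\Psi(i_0))$ visibly equals $(X_1/\alpha_1(\Psi(i_0))) \oplus X_2$; it is nonzero automatically since $X_2 \neq 0$, hence lies in $\F'_{n-1}(\Psi)$ and is indecomposable by the inductive hypothesis, forcing $X_1 = \alpha_1(\Psi(i_0))$ at once. This sidesteps the length count and the construction of $\overline{\beta}$ entirely. Your observation that $\mathbb{M}_{\Psi(i_0)}$ is closed under direct summands replaces the paper's application of Proposition \ref{lema propiedad M-lambda} to $Z$; both serve only to make $\alpha_1$ a monomorphism, hence an isomorphism onto $X_1$, which splits $\varepsilon$ and gives the contradiction. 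Overall a genuine streamlining, with no gaps.
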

\begin{proof} Let $X\in\F'_n(\Psi)$ and $i_0=\text{min}(X)$. We show, by induction on $n$, that $X$ is an indecomposable $\Lambda$-module.
If $n=1$ then $X=\Psi(i_0)$ and hence $X$ is indecomposable.\\

Let $n>1$. If $X\in\F'_{n-1}(\Psi),$ then by induction we get that $X$ is indecomposable. Otherwise,
there is a non-split exact sequence
$\varepsilon:0\rightarrow\Psi(i_0)\overset{\alpha}{\longrightarrow} X\overset{\beta}{\longrightarrow}Z\rightarrow 0$ with $Z\in\F'_{n-1}(\Psi)$. Hence, it follows that $Z$ is indecomposable by induction.
\

Suppose that $X=X'\oplus X''$
with $X'\neq 0$ and $X''\neq 0.$ Hence $\alpha=\left( \alpha' \atop \alpha''\right):\Psi(i_0)\to X'\oplus X''$ and
$\beta=(\beta'\ \beta'' ):X'\oplus X''\to Z.$ Therefore, by Lemma \ref{lema suc. X, Y1+Y2,Z indesc}, $\beta'\alpha'(\Psi(i_0))=-\beta''\alpha''(\Psi(i_0))\neq 0$. Thus, by Proposition \ref{lema propiedad M-lambda}, we have that $\beta''\alpha'':\Psi(i_0)\rightarrow Z$ and $\beta'\alpha':\Psi(i_0)\rightarrow Z$ are monomorphisms. %with their cokernels in $\F(\{\Psi(j)\;:\;j\geq i_0\}).$
In particular, $\alpha'$ and $\alpha''$ are monomorphisms. Moreover, from $\beta'\alpha'(\Psi(i_0))=-\beta''\alpha''(\Psi(i_0))$, we get that the epimorphism $\beta:X\rightarrow Z$ induces an epimorphism
$$ X'/\alpha'(\Psi(i_0))\oplus X''/\alpha''(\Psi(i_0))\overset{\overline{\beta}}{\longrightarrow}Z/\beta'\alpha'(\Psi(i_0)).$$
We assert that $\overline{\beta}$ is an isomorphism. Indeed, it follows easily from the equalities
\[
\ell_\Lambda(X'/\alpha'(\Psi(i_0))\oplus X''/\alpha''(\Psi(i_0)))=\ell_\Lambda(X)-\ell_\Lambda(\Psi(i_0))-\ell_\Lambda(\Psi(i_0))=\]
\[
=\ell_\Lambda(Z)-\ell_\Lambda(\Psi(i_0))=\ell_\Lambda(Z/\beta'\alpha'(\Psi(i_0))).
\]

In what follows, we show that $Z/\beta'\alpha'(\Psi(i_0))$ is indecomposable. Suppose that  $Z/\beta'\alpha'(\Psi(i_0))=0.$ Then, by the isomorphism $\overline{\beta},$ we have $X'=\alpha'(\Psi(i_0)).$ Hence, the morphism  $((\alpha')^{-1}\ 0 ):X'\oplus X''\to\Psi(i_0)$ gives us that $\varepsilon$ splits, a contradiction, proving that $Z/\beta'\alpha'(\Psi(i_0))\neq 0.$ Now, we consider the exact sequence
%in $\F(\{\Psi(j)\;:\;j\geq i_0\})$
$$\varepsilon':0\longrightarrow\Psi(i_0)\overset{ \beta'\alpha'
                                          }{\longrightarrow}Z\longrightarrow Z/\beta'\alpha'(\Psi(i_0))\longrightarrow0.$$
Since $Z\in\F'_{n-1}(\Psi)$, it follows from Proposition \ref{lema X en F',Xindescomponible} that $Z/\beta'\alpha'(\Psi(i_0))$ belongs to $\F'_{n-2}(\Psi)\cup\{0\}.$ So, by induction, we get that $Z/\beta'\alpha'(\Psi(i_0))$ is indecomposable.
\

Finally, using that the map $\overline{\beta}$ is an isomorphism, we have that $\alpha'(\Psi(i_0))=X'$ or $\alpha''(\Psi(i_0))=X'',$ and hence $\varepsilon$ splits, a contradiction. Therefore, $X$ is an indecomposable $\Lambda$-module.
\end{proof}

We are now in a position to prove the main result of this section.

\begin{theorem}
Let $(\Psi,\leq)$ be a proper pre-costratifying system of size $t$ in $\modu\,(\Lambda).$ If $\ell_\Lambda(\ind(\F(\Psi)))<\infty$ then there exists a
family $\Q=\{Q(i)\}^{t}_{i=1}$ of $\Lambda$-modules in $\F'(\Psi)$ such that $(\Psi,\Q,\leq)$ is a proper costratifying system of size $t$ in $\modu\,(\Lambda)$.
\end{theorem}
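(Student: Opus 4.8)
The plan is to prove the theorem by induction on $t$, the size of the system, constructing the modules $Q(i)$ one at a time. The base case $t=1$ is precisely Lemma \ref{lema caso t=1 exist. sist.estrat.propio}, which produces a non-split exact sequence $0\to Z(1)\to Q(1)\to\Psi(1)\to 0$ with $Z(1)\in\F(\{\Psi(1)\})$ and $Q(1)\in{}^{\bot_1}\Psi(1)\cap\F'(\{\Psi(1)\})$; since $\F'(\{\Psi(1)\})\subseteq\F'(\Psi)$, this gives the desired $Q(1)$, and indecomposability of $Q(1)$ follows from Proposition \ref{X en F'es Xindescomponible}. The key observation for the inductive step is that I should construct the $Q(i)$ in the order dictated by $\leq$, building each $Q(i)$ so that $Q(i)\in\F'(\{\Psi(j):j\geq i\})$; this guarantees indecomposability (Proposition \ref{X en F'es Xindescomponible}) and, via Remark \ref{compaginamientoFiltrados}, the vanishing $\Hom_\Lambda(Q(i),\Psi(j))=0$ for $j<i$, which will help in verifying condition (d).

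For a fixed $i$, I would construct $Q(i)$ by iterating extensions exactly as in the $t=1$ case, but now starting from $X_0=\Psi(i)$ and taking at each stage a non-split extension $0\to\Psi(i)\to X_{k+1}\to X_k\to 0$ whenever $\Ext^1_\Lambda(X_k,\Psi(i))\neq 0$. One must check that each $X_{k+1}$ so obtained, together with the natural epimorphism onto $\Psi(i)$, lands in $\F'(\{\Psi(j):j\geq i\})$: here one uses that $\min(X_k)=i$ for every $X_k$ (all composition factors lie among $\{\Psi(j):j\geq i\}$ because $\Psi(i)\hookrightarrow X_{k+1}\twoheadrightarrow X_k$ and $\min$ is additive-like along such sequences by Lemma \ref{LemmaDesigualdad}), so the defining clause (b) of Definition \ref{defin. de la clase F`} applies with $\min(X_{k+1})=i$. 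Since all $X_k\in\F(\Psi)$, their lengths strictly increase, and $\ell_\Lambda(\ind(\F(\Psi)))<\infty$, the process terminates at some $X_n=:Q(i)$ with $\Ext^1_\Lambda(Q(i),\Psi(i))=0$. By Proposition \ref{X en F'es Xindescomponible}, $Q(i)$ is indecomposable, and $Z(i):=\Ker(\beta_1\cdots\beta_n)\in\F(\{\Psi(j):j\geq i\})$ by Lemma \ref{lema auxiliar morfismos}, giving the exact sequence $\varepsilon_i$ of condition (c).

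It remains to verify all four axioms of Definition \ref{def de sist. estricto} for $(\Psi,\Q,\leq)$. Axiom (a) is hypothesis (a) of the pre-costratifying system; axiom (b) is hypothesis (b). Axiom (c) is the sequence $\varepsilon_i$ just produced, noting $Z(i)\in\F(\{\Psi(j):j\geq i\})\subseteq\F(\{\Psi(j):j\leq i\})$ — wait, one must be careful here: condition (c) of Definition \ref{def de sist. estricto} asks for $Z(i)\in\F(\{\Psi(j):j\leq i\})$, whereas I have $Z(i)\in\F(\{\Psi(j):j\geq i\})$, in fact filtered only by copies of $\Psi(i)$ itself, so this is compatible. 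The main obstacle is axiom (d): one must show $\Ext^1_\Lambda(Q(i),\Psi(j))=0$ for all $j$, not merely for $j=i$. I expect this to follow by descending induction on $i$ (or by exploiting the order): for $j<i$ one uses the long exact sequence in $\Ext$ coming from $\varepsilon_i$ together with $Q(i)\in\F(\{\Psi(k):k\geq i\})$ and the already-established vanishing $\Ext^1_\Lambda(\Psi(k),\Psi(j))=0$ for $k\geq i>j$ (hypothesis (c) of the pre-system) to conclude $\Ext^1_\Lambda(Q(i),\Psi(j))=0$ — indeed $\Ext^1_\Lambda(\,\F(\{\Psi(k):k\geq i\}),\Psi(j))=0$ for $j<i$; for $j=i$ it is the construction; and for $j>i$ one combines the vanishing $\Ext^1_\Lambda(Z(i),\Psi(j))$ (as $Z(i)$ is $\Psi(i)$-filtered and $\Ext^1_\Lambda(\Psi(i),\Psi(j))=0$ for $i<j$) with $\Ext^1_\Lambda(\Psi(i),\Psi(j))=0$ via the sequence $\varepsilon_i$. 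Assembling these three cases through the long exact sequence applied to $\varepsilon_i$ completes the verification of (d), and hence the proof.
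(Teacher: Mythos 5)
The construction you propose for $Q(i)$ — iterating non-split extensions $0\to\Psi(i)\to X_{k+1}\to X_k\to 0$ until $\Ext^1_\Lambda(X_n,\Psi(i))=0$ — produces a module $Q(i)$ that lies in $\F(\{\Psi(i)\})$, i.e.\ it is filtered by copies of $\Psi(i)$ only. This kills $\Ext^1$ into $\Psi(i)$ by construction, and it kills $\Ext^1$ into $\Psi(j)$ for $j>i$ because $\Ext^1_\Lambda(\Psi(i),\Psi(j))=0$ when $i<j$ by hypothesis (c) of Definition \ref{def de pre-sist. estricto}. But when you turn to the case $j<i$, you invoke ``the already-established vanishing $\Ext^1_\Lambda(\Psi(k),\Psi(j))=0$ for $k\geq i>j$,'' citing hypothesis (c) of the pre-system. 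This is exactly backwards: hypothesis (c) asserts $\Ext^1_\Lambda(\Psi(k),\Psi(j))=0$ when $k<j$, not when $k>j$. For $j<i$, nothing in the definition of a proper pre-costratifying system controls $\Ext^1_\Lambda(\Psi(i),\Psi(j))$, so there is no reason for $\Ext^1_\Lambda(Q(i),\Psi(j))$ to vanish, and your construction does nothing to force it to. This is a genuine gap, and it is precisely the hard part of the theorem.

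The paper's proof handles the issue differently and in a way that cannot be patched into your plan without essentially rewriting it. It inducts on the size $t$: apply the inductive hypothesis to the smaller system $\widetilde{\Psi}=\{\Psi(2),\dots,\Psi(t)\}$ to produce, for each $i\geq 2$, a module $\widetilde{Q}(i)$ already satisfying $\Ext^1_\Lambda(\widetilde{Q}(i),\Psi(j))=0$ for all $j\geq 2$, together with the required epimorphism $\lambda_i:\widetilde{Q}(i)\to\Psi(i)$. Then, to deal with the missing index $j=1$, it iterates non-split extensions by $\Psi(1)$ (not by $\Psi(i)$): $0\to\Psi(1)\to X_{k+1}\to X_k\to 0$, starting from $X_0=\widetilde{Q}(i)$. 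The crucial point is that each such step preserves the vanishing of $\Ext^1$ into $\Psi(j)$ for $j\geq 2$, because $\Ext^1_\Lambda(\Psi(1),\Psi(j))=0$ for $j\geq 2$ does follow from hypothesis (c) (here the first index $1$ is smaller than the second). When the process terminates, one also has $\Ext^1_\Lambda(X_n,\Psi(1))=0$, so $Q(i)=X_n$ lands in $^{\bot_1}\Psi$, and composing with $\lambda_i$ gives the sequence $\varepsilon_i$ with kernel in $\F(\{\Psi(j):j\leq i\})$ by Lemma \ref{lema auxiliar morfismos}. In short: extensions must be taken by $\Psi(\lambda)$ for the smallest index $\lambda$ where vanishing is not yet ensured, so that the $\Ext$-vanishing into the larger indices is never disturbed; extending repeatedly by $\Psi(i)$ itself, as in your plan, only protects the indices larger than $i$ and leaves all smaller indices uncontrolled.
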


\begin{proof} Let $\ell_\Lambda(\ind(\F(\Psi)))<\infty.$ Since modules in $\F'(\Psi)$ are indecomposable (see Proposition \ref{X en F'es Xindescomponible}), the proof is completed by showing that there exists a family $\Q=\{Q(i)\}^{t}_{i=1}$ of $\Lambda$-modules in $\F'(\Psi),$ satisfying conditions (c) and (d) in Definition
\ref{def de sist. estricto}. To prove this, we proceed by induction on the size $t$ of $\Psi$. If $t=1,$ the result follows directly from
Lemma \ref{lema caso t=1 exist. sist.estrat.propio}.\\
Let $t>1$. For the sake of simplicity, we may assume that the order $\leq$ in $[1,t]$ is the natural one. So, we have that
$(\widetilde{\Psi},\leq),$ with $\widetilde{\Psi}=\{\Psi(2),...,\Psi(t)\},$ is a proper
pre-costratifying system of size $t-1$ in
$\modu\,(\Lambda).$ Therefore, from the inductive hypothesis applied to the smaller system,  we conclude the existence of a family
$\widetilde{Q}=\{\widetilde{Q}(i)\}^{t}_{i=2}$ in $\F'(\widetilde{\Psi})\subseteq\F'(\Psi)$ satisfying:
\begin{enumerate}
%\item[(a')] $\widetilde{Q}\subseteq\F'(\widetilde{\Psi})\subseteq\F'(\Psi)$.
\item[($\widetilde{c}$)] for each $i\in[2,t],$ there is an exact sequence
 $$\widetilde{\varepsilon}_i:0\rightarrow \widetilde{Z}(i)\rightarrow \widetilde{Q}(i)\overset{\lambda_i}{\rightarrow}\Psi(i)\rightarrow0,$$
 with $\widetilde{Z}(i)\in\F(\{\Psi(j):2\leq j\leq i\});$ and
\item[($\widetilde{d}$)] $\widetilde{Q}\subseteq\leftidx{^{\bot_{1}}}{\widetilde{\Psi}}$.
\end{enumerate}
Now, consider the family with just one element $\{\Psi(1)\}.$ For this case, we have already proved the theorem. Thus, there exists an exact sequence $$ \varepsilon_1:0\rightarrow Z(1)\rightarrow Q(1)\overset{\beta}{\rightarrow} \Psi(1)\rightarrow 0,$$
\noindent with $Z(1)\in\F(\{\Psi(1)\}),$ $Q(1)\in\F'(\{\Psi(1)\})\subseteq\F'(\Psi)$ and $\Ext^{1}_\Lambda(Q(1),\Psi(1))=0$. Then $\varepsilon_1$ satisfies (c) in Definition \ref{def de sist. estricto}. Furthermore, since $Q(1)\in\F(\{\Psi(1)\})$ and $\Ext^{1}_\Lambda(\Psi(1),\Psi(j))=0$ for $j\geq 2,$ we have that $\Ext^{1}_\Lambda(Q(1),\Psi)=0$. Thus (d) in Definition \ref{def de sist. estricto} holds.
\

We next construct the required exact sequence $\varepsilon_i$ for each $i\in [2,t]$. If $\Ext^{1}_\Lambda(\widetilde{Q}(i),\Psi(1))=0$, then we set
$\widetilde{\varepsilon}_i=\varepsilon_i$ and $\widetilde{Q}(i)=Q(i).$\\
Suppose that $\Ext^{1}_\Lambda(\widetilde{Q}(i),\Psi(1))\neq 0$. Then there exists a non-split exact sequence $$\delta_1:0\rightarrow\Psi(1)\rightarrow X_1\overset{\beta_1}{\rightarrow}\widetilde{Q}(i)\rightarrow 0.$$
\noindent Since $\widetilde{Q}(i)\in\F'(\Psi)$, we have that $X_1\in\F'(\Psi)$. Moreover, $X_1\in\leftidx{^{\bot_{1}}}{\widetilde{\Psi}}$, as follows by applying $\Hom_\Lambda(-,\Psi(j))$ to $\delta_1$, with $j\in [2,t]$.
On the other hand, we have the exact sequence $\nu_1:0\rightarrow K_1\rightarrow X_1\overset{\lambda_{i}\beta_1}{\rightarrow} \Psi(i)\rightarrow 0$ where $K_1=\Ker(\lambda_{i}\beta_1)$. Then, by Lemma \ref{lema auxiliar morfismos}, $K_1\in\F(\{\Psi(j):j\leq i\})$. Thus, if $\Ext^{1}_\Lambda(X_1,\Psi(1))=0$, we conclude that $\varepsilon_i=\nu_1,$ with $Q(i)=X_1,$ satisfies the required conditions.\\
Assume now that $\Ext^{1}_\Lambda(X_1,\Psi(1))\neq 0$. Then there exists a non-split exact sequence $\delta_2:0\rightarrow\Psi(1)\rightarrow X_2\overset{\beta_2}{\rightarrow}X_1\rightarrow 0$. Thus $X_2\in\F'(\Psi)$ because $X_1\in\F'(\Psi)$. Analogously, from $X_1\in\leftidx{^{\bot_{1}}}{\widetilde{\Psi}}$ we have that $X_2\in\leftidx{^{\bot_{1}}}{\widetilde{\Psi}}$. Moreover, by Lemma \ref{lema auxiliar morfismos}, we have an exact sequence $\nu_2:0\rightarrow K_2\rightarrow X_2\overset{\lambda_{i}\beta_1\beta_2}{\rightarrow} \Psi(i)\rightarrow 0$ where $K_2\in\F(\{\Psi(j):j\leq i\}).$ Iterating this procedure, we obtain non-split exact sequences $\delta_j:0\rightarrow\Psi(1)\rightarrow X_j\overset{\beta_j}{\rightarrow} X_{j-1}\rightarrow 0$ with $X_j\in\F'(\Psi)\cap\leftidx{^{\bot_{1}}}{\widetilde{\Psi}}$, for $1\leq j\leq k$. Since $\ell_\Lambda(X_1)<\ell_\Lambda(X_2)<\cdots <\ell_\Lambda(X_k)$ and $\ell_\Lambda(\ind(\F(\Psi)))<\infty$, we eventually reach some $X_n\in\F'(\Psi)
 $ such that $\Ext^{1}_\Lambda(X_n,\Psi(1))=0,$ and then $X_n\in\leftidx{^{\bot_{1}}}{\Psi}.$ Therefore, the exact sequence $0\rightarrow Z(i)\rightarrow Q(i)\overset{\beta}{\rightarrow} \Psi(i)\rightarrow 0$ with $Q(i)=X_n $, $\beta=\lambda_i\beta_1\beta_2\cdots\beta_n$ and $Z(i)=\Ker(\beta)$ satisfies the required conditions.
\end{proof}

\begin{corollary}
Let $\Lambda$ be an artin algebra of finite representation type. Then any proper pre-costratifying system $(\Psi,\leq)$ of size $t$ in $\modu\,(\Lambda)$ defines a proper costratifying system $(\Psi,\Q,\leq)$ of size $t$ in $\modu\,(\Lambda)$.
\end{corollary}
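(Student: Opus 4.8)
The plan is to derive the corollary directly from the preceding Theorem. Recall that the Theorem asserts: if $(\Psi,\leq)$ is a proper pre-costratifying system of size $t$ in $\modu\,(\Lambda)$ and $\ell_\Lambda(\ind(\F(\Psi)))<\infty$, then there is a family $\Q=\{Q(i)\}_{i=1}^t$ in $\F'(\Psi)$ with $(\Psi,\Q,\leq)$ a proper costratifying system. So the only thing I need to verify is that the finiteness hypothesis $\ell_\Lambda(\ind(\F(\Psi)))<\infty$ is automatically satisfied when $\Lambda$ has finite representation type.

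The key observation is this: if $\Lambda$ is of finite representation type, then $\ind\,(\Lambda)$ is a finite set, say $\ind\,(\Lambda)=\{M_1,\dots,M_r\}$. Since $\F(\Psi)\subseteq\modu\,(\Lambda)$, every indecomposable module in $\F(\Psi)$ is isomorphic to one of the $M_j$, and hence $\ell_\Lambda(\ind(\F(\Psi)))=\sup\{\ell_\Lambda(X):X\in\ind(\F(\Psi))\}\leq\max\{\ell_\Lambda(M_j):1\leq j\leq r\}<\infty$. Thus the hypothesis of the Theorem holds, and applying it yields the desired family $\Q$ and the proper costratifying system $(\Psi,\Q,\leq)$.

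There is no real obstacle here; the corollary is essentially immediate once one notes that finite representation type forces a uniform bound on the lengths of all indecomposable modules, a fortiori on those in the subcategory $\F(\Psi)$. The only mild point worth stating cleanly is that $\ell_\Lambda$ of an indecomposable over an artin algebra is finite (every finitely generated module has finite length over an artin algebra), so that the finitely many numbers $\ell_\Lambda(M_j)$ admit a finite maximum. I would phrase the proof in two sentences: first recording the finiteness of $\ell_\Lambda(\ind(\F(\Psi)))$, then invoking the Theorem verbatim.
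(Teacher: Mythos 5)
Your proof is correct and is exactly the intended argument: the paper leaves the corollary without a written proof precisely because, as you observe, finite representation type gives a finite set $\ind(\Lambda)$ of finite-length modules, so $\ell_\Lambda(\ind(\F(\Psi)))<\infty$ and the preceding theorem applies directly.
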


\section{Some homological characterizations.}

\indent Throughout the rest of this paper, for $M$ in $\modu\,(\Lambda)$ we consider the artin algebra
$\Gamma_M=\End(\leftidx{_\Lambda}{M})^{op}$ and the functors
$$\modu\,(\Lambda)\underset{G_M}{\overset{F_M}{\rightleftarrows}}\modu\,(\Gamma),$$
where $F_M=\Hom_\Lambda(M,-)$ and $G_M=M\otimes_\Gamma-$, as well as the functors
$$\modu\,(\Lambda)\underset{\overline{G}_M}{\overset{\overline{F}_M}{\rightleftarrows}}\modu\,(\Gamma^{op}),$$
where $\overline{F}_M=\Hom_\Lambda(-,M)$ and $\overline{G}_M=\Hom_{\Gamma^{op}}(-,M)$. We
also have the functor $*=\Hom_{\Gamma}(-,\Gamma):\modu\,(\Gamma)\to \modu\,(\Gamma^{op}).$ This
functor induces a duality $*:\proj\,(\Gamma)\to \proj\,(\Gamma^{op}),$ whose quasi-inverse
$\Hom_{\Gamma^{op}}(-,\Gamma^{op})$ is also denoted by $*.$ Finally, we denote by $D$ the
usual duality for artin algebras. %In case we need to stress, for example, the fact that the functor
%$F$ is attached to $M,$ we will write $F_M$ instead of $F$ (see in \ref{relacionesFuntores}).
\

We recall that the functors $F_M$ and $G_M$ induce, by restriction, inverse equivalences between
$\add\,(M)$ and $\proj\,(\Gamma).$ Furthermore, the functors $\overline{F}_M$ and $\overline{G}_M$ induce, by restriction, inverse dualities between
$\add\,(M)$ and $\proj\,(\Gamma^{op}).$

\begin{lemma}
\label{lema gral.igualdad de proyectivos}
Let $M$ be in $\modu\,(\Lambda)$. Then the
following statements hold.
\begin{enumerate}
 \item[(a)] $(\overline{G}_M\circ*\circ F_M)|_{\add\,(M)}\simeq 1_{\add\,(M)}.$
 \vspace{.2cm}
 \item[(b)] $(*\circ F_M)|_{\add\,(M)}\simeq \overline{F}_M|_{\add\,(M)}.$
\end{enumerate}
\end{lemma}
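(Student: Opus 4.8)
The plan is to prove (b) first by exhibiting an explicit natural isomorphism, and then to deduce (a) from (b) together with the fact, recalled just above, that $\overline{F}_M$ and $\overline{G}_M$ restrict to mutually inverse dualities between $\add\,(M)$ and $\proj\,(\Gamma^{op})$. For (b), the starting point is the standard identification of $F_M(M)=\Hom_\Lambda(M,M)$, with its canonical left $\Gamma$-module structure ($\gamma\cdot f=f\gamma$ for $\gamma\in\Gamma=\End(\leftidx{_\Lambda}{M})^{op}$ and $f\in\Hom_\Lambda(M,M)$), with the regular left $\Gamma$-module $\Gamma$. Using this, for every $X\in\add\,(M)$ one has
$$(*\circ F_M)(X)=\Hom_\Gamma(F_M X,\Gamma)\ \simeq\ \Hom_\Gamma(F_M X,F_M M),$$
and since $F_M$ restricts to an equivalence $\add\,(M)\to\proj\,(\Gamma)$, and in particular is full and faithful on $\add\,(M)$, the assignment $g\mapsto F_M(g)$ is a bijection $\Hom_\Lambda(X,M)\to\Hom_\Gamma(F_M X,F_M M)$. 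Composing the two, one obtains a bijection
$$\theta_X:\ \overline{F}_M(X)=\Hom_\Lambda(X,M)\longrightarrow(*\circ F_M)(X),\qquad \theta_X(g)(f)=g\circ f\in\Gamma$$
for $g:X\to M$ and $f:M\to X$. It then remains to check that each $\theta_X$ is a morphism of $\Gamma^{op}$-modules, intertwining the canonical $\Gamma^{op}$-action on $\Hom_\Lambda(X,M)$ with the action on $\Hom_\Gamma(F_M X,\Gamma)$ induced by the right $\Gamma$-action on $\Gamma$, and that the family $\theta=(\theta_X)_{X\in\add(M)}$ is natural in $X$; both are direct computations with the formula $\theta_X(g)(f)=g\circ f$.

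Granting (b), statement (a) follows by applying the functor $\overline{G}_M$ to the natural isomorphism just constructed: one gets
$$(\overline{G}_M\circ*\circ F_M)|_{\add(M)}\ \simeq\ (\overline{G}_M\circ\overline{F}_M)|_{\add(M)},$$
and the right-hand functor is naturally isomorphic to $1_{\add(M)}$ because $\overline{F}_M$ and $\overline{G}_M$ induce mutually inverse dualities between $\add\,(M)$ and $\proj\,(\Gamma^{op})$.

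The overall structure is formal; the one genuinely delicate point, and the place where care is required, is the bookkeeping of module structures over the opposite algebras $\Gamma=\End(\leftidx{_\Lambda}{M})^{op}$ and $\Gamma^{op}$ --- specifically, the passage to opposite rings and the accompanying left/right conventions --- both in identifying $F_M(M)$ with the regular left $\Gamma$-module and in verifying the $\Gamma^{op}$-linearity of $\theta_X$.
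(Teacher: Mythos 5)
Your proof is correct, and it runs in the opposite order from the paper's: you prove~(b) directly and then deduce~(a) by applying $\overline{G}_M$ and invoking the duality $\overline{G}_M\circ\overline{F}_M\simeq 1_{\add(M)}$. The paper instead proves~(a) directly via the chain of canonical natural isomorphisms
$$\overline{G}_M(F_M(X)^{*})=\Hom_{\Gamma^{op}}(F_M(X)^{*},M)\simeq F_M(X)^{**}\otimes_{\Gamma^{op}}M\simeq F_M(X)\otimes_{\Gamma^{op}}M\simeq M\otimes_{\Gamma}F_M(X)\simeq X,$$
using the Hom--tensor adjunction and the double-dual isomorphism for finitely generated projectives, and then reads off~(b) from~(a) together with $\overline{F}_M\circ\overline{G}_M\simeq 1_{\proj(\Gamma^{op})}$. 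Your route for~(b) instead leans on the identification $F_M(M)\cong\Gamma$ as left $\Gamma$-modules and the full faithfulness of $F_M$ on $\add(M)$, producing an explicit $\theta_X(g)(f)=g\circ f$; this is more concrete (the isomorphism is visible) but demands the left/right bookkeeping over $\Gamma$ and $\Gamma^{op}$ that you yourself flag as the delicate point. The paper's chain is terser and avoids elementwise formulas at the cost of relying on the standard package of adjunction and duality isomorphisms. Both are complete once the cited facts ($F_M,G_M$ inverse equivalences, $\overline{F}_M,\overline{G}_M$ inverse dualities) are granted, as they are in the text immediately preceding the lemma.
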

\begin{proof}
(a) For any $X\in\add\,(M),$ we have the following natural isomorphisms $\overline{G}_M(F_M(X)^{*})=
\Hom_{\Gamma^{op}}(F_M(X)^{*},M)\simeq F_M(X)^{**}\otimes_{\Gamma^{op}}M\simeq F_M(X)\otimes_{\Gamma^{op}}M\simeq M\otimes_{\Gamma}F_M(X)\simeq G_M\circ F_M(X)\simeq X.$
\

(b) It follows from (a) since
$(\overline{F}_M\circ\overline{G}_M)|_{\proj\,(\Gamma^{op})}\simeq 1_{\proj\,(\Gamma^{op})}.$
\end{proof}

\

We next recall the definition of the class $C^{\wedge}_2(M),$ which was introduced by Platzeck and Pratti in \cite{Platzeck Pratti} (the notation, used in \cite{Platzeck Pratti}, for
 such a class is $C^{M}_2$). The objects in $C^{\wedge}_2(M)$ are the $\Lambda$-modules $X$  admitting an exact sequence in $\modu\,(\Lambda)$
$$M_2\rightarrow M_1\rightarrow M_0\rightarrow X\rightarrow 0$$
with $M_i\in\add\,(M)$, and such that the induced sequence
 $$F_M(M_2)\rightarrow F_M(M_ 1)\rightarrow F_M(M_0)\rightarrow F_M(X)\rightarrow 0$$
is exact in $\modu\,(\Gamma)$.
\

Dually, we define the class $C^{\vee}_2(M)$, consisting of
the $\Lambda$-modules $Z$
admitting an exact sequence in $\modu\,(\Lambda)$
$$0\rightarrow Z\rightarrow M_0\rightarrow M_1\rightarrow M_2,$$
with $M_i\in\add\,(M)$, and such that the induced sequence
$$\overline{F}_M(M_2)\rightarrow\overline{F}_M(M_1)\rightarrow\overline{F}_M(M_0)\rightarrow\overline{F}_M(Z)\rightarrow0$$
is exact in $\modu\,(\Gamma^{op})$.

\begin{remark}\label{relacionesFuntores} \rm Let $M$ be in $\modu\,(\Lambda).$ Then, since
$F_{DM}\circ D\simeq \overline{F}_M$ and $D\circ G_{DM}\simeq \overline{G}_M$, we have that $D(C^{\vee}_2(M))=C^{\wedge}_2(DM)$. Furthermore, $\Gamma_M\simeq \Gamma_{DM}^{op}$ as rings.
\end{remark}

\begin{proposition}
\label{Prop. equivalencia C2Check}
 Let $M$ be in $\modu\,(\Lambda)$, and let $\mathcal{C}$ be a class of objects in $\modu\,(\Lambda)$ such that  $M\in\mathcal{C}^{\bot_1}.$ If $\F(\mathcal{C})\subseteq C^{\vee}_2(M)$ then the restriction
 $\overline{F}_M|_{\F(\mathcal{C})}:\F(\mathcal{C})\rightarrow \F(\overline{F}(\mathcal{C}))$ is an exact duality with quasi inverse $\overline{G}_M|_{\F(\overline{F}(\mathcal{C}))}:\F(\overline{F}(\mathcal{C}))\rightarrow \F(\mathcal{C})$.
\end{proposition}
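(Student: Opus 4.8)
The plan is to combine three ingredients: the hypothesis $\F(\mathcal{C})\subseteq C^{\vee}_2(M)$, which controls how $\overline{F}_M$ behaves on short exact sequences with all terms in $\F(\mathcal{C})$; the hypothesis $M\in\mathcal{C}^{\bot_1}$, which upgrades to $M\in\F(\mathcal{C})^{\bot_1}$ since $\F(\mathcal{C})$ is built from $\mathcal{C}$ by iterated extensions; and Lemma~\ref{lema gral.igualdad de proyectivos}, which says $\overline{G}_M\circ\overline{F}_M$ is the identity on $\add(M)$ (via $(\overline{G}_M\circ *\circ F_M)|_{\add(M)}\simeq 1$ together with $(*\circ F_M)|_{\add(M)}\simeq\overline{F}_M|_{\add(M)}$). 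First I would fix $X\in\F(\mathcal{C})$ and choose, using $X\in C^{\vee}_2(M)$, an exact sequence $0\to X\to M_0\to M_1\to M_2$ with $M_i\in\add(M)$ whose image under $\overline{F}_M$ is exact, i.e. a $\overline{F}_M$-acyclic copresentation. Applying $\overline{F}_M$ gives a presentation $\overline{F}_M(M_1)\to\overline{F}_M(M_0)\to\overline{F}_M(X)\to 0$ in $\modu(\Gamma^{op})$ by projectives (since $\overline{F}_M$ sends $\add(M)$ to $\proj(\Gamma^{op})$); then $\overline{G}_M$ applied to this, being right exact and agreeing with the identity on the projective part by Lemma~\ref{lema gral.igualdad de proyectivos}(a), recovers $X$ — this is the natural isomorphism $\overline{G}_M\overline{F}_M\simeq 1$ on $\F(\mathcal{C})$.

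Next I would check the harder half, namely that $\overline{F}_M$ is \emph{exact} on $\F(\mathcal{C})$ and that its image lands in $\F(\overline{F}_M(\mathcal{C}))$. For exactness, take a short exact sequence $0\to X'\to X\to X''\to 0$ with all terms in $\F(\mathcal{C})$; I want $0\to\overline{F}_M(X'')\to\overline{F}_M(X)\to\overline{F}_M(X')\to 0$ exact in $\modu(\Gamma^{op})$. Left exactness of $\overline{F}_M=\Hom_\Lambda(-,M)$ is automatic; right exactness (surjectivity of $\overline{F}_M(X)\to\overline{F}_M(X')$) follows because $\Ext^1_\Lambda(X'',M)=0$, which holds since $M\in\F(\mathcal{C})^{\bot_1}$. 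For the filtration claim, I would argue by induction on the length of a $\mathcal{C}$-filtration of $X$: if $0\to X_1\to X\to X''\to 0$ with $X_1\in\mathcal{C}$ and $X''$ having a shorter $\mathcal{C}$-filtration, then exactness of $\overline{F}_M$ gives $0\to\overline{F}_M(X'')\to\overline{F}_M(X)\to\overline{F}_M(X_1)\to 0$, and by induction $\overline{F}_M(X'')\in\F(\overline{F}_M(\mathcal{C}))$ while $\overline{F}_M(X_1)\in\overline{F}_M(\mathcal{C})$, so $\overline{F}_M(X)\in\F(\overline{F}_M(\mathcal{C}))$; the base case $X\in\mathcal{C}$ is trivial. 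Symmetrically one shows $\overline{G}_M$ is exact on $\F(\overline{F}_M(\mathcal{C}))$ and $\overline{G}_M\overline{F}_M\simeq 1$, and $\overline{F}_M\overline{G}_M\simeq 1$, so the two functors are mutually quasi-inverse exact dualities.

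The main obstacle I anticipate is the surjectivity half of exactness and, relatedly, making the isomorphism $\overline{G}_M\overline{F}_M\simeq 1$ genuinely \emph{natural} rather than just pointwise. The surjectivity issue is handled cleanly once one knows $M\in\F(\mathcal{C})^{\bot_1}$, so I would isolate that promotion ($\mathcal{C}^{\bot_1}\subseteq\F(\mathcal{C})^{\bot_1}$, by the long exact sequence in $\Ext$ along a $\mathcal{C}$-filtration and induction) as a preliminary remark. Naturality of $\overline{G}_M\overline{F}_M\simeq 1_{\F(\mathcal{C})}$ can be obtained by the standard device of comparing $\overline{F}_M$-acyclic copresentations: any morphism $X\to Y$ in $\F(\mathcal{C})$ lifts to a chain map between chosen copresentations (using $\overline{F}_M$-acyclicity to produce the lifts), and applying $\overline{F}_M$ then $\overline{G}_M$ shows the square commutes up to the canonical identifications coming from Lemma~\ref{lema gral.igualdad de proyectivos}. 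One could alternatively deduce the whole statement from Remark~\ref{relacionesFuntores} by dualizing a corresponding statement about $F_{DM}$ and $C^{\wedge}_2(DM)$ if such a statement is available in \cite{Platzeck Pratti}, which would shorten the argument considerably.
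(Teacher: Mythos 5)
Your plan is a correct direct proof, but it is a genuinely different route from the one in the paper. The paper disposes of Proposition~\ref{Prop. equivalencia C2Check} in one line: via Remark~\ref{relacionesFuntores} (the identities $F_{DM}\circ D\simeq\overline{F}_M$, $D\circ G_{DM}\simeq\overline{G}_M$, $D(C^{\vee}_2(M))=C^{\wedge}_2(DM)$, and $\Gamma_M\simeq\Gamma_{DM}^{op}$) the statement is exactly the dual of Theorem~2.10 of \cite{MPV}, so no new work is needed. You anticipate this shortcut in your final sentence, but misattribute the target result to \cite{Platzeck Pratti}; the relevant covariant statement about $F$ and $C^{\wedge}_2$ is in \cite{MPV}, not there. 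What your direct approach buys is self-containment: it re-derives the covariant theorem's mechanism (compare the $\overline{F}_M$-acyclic copresentation $0\to X\to M_0\to M_1$ with $0\to\overline{G}_M\overline{F}_M(X)\to\overline{G}_M\overline{F}_M(M_0)\to\overline{G}_M\overline{F}_M(M_1)$ via the unit and Lemma~\ref{lema gral.igualdad de proyectivos}(a), and use $M\in\F(\mathcal{C})^{\bot_1}$ for exactness of $\overline{F}_M$). What the paper's approach buys is brevity and avoids redoing naturality.

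Two small cautions on your write-up. First, $\overline{G}_M=\Hom_{\Gamma^{op}}(-,M)$ is a contravariant Hom, hence \emph{left} exact; the step where you apply it to the projective presentation $\overline{F}_M(M_1)\to\overline{F}_M(M_0)\to\overline{F}_M(X)\to 0$ should be phrased as producing the exact row $0\to\overline{G}_M\overline{F}_M(X)\to\overline{G}_M\overline{F}_M(M_0)\to\overline{G}_M\overline{F}_M(M_1)$, which is then compared to $0\to X\to M_0\to M_1$ by the five lemma, rather than invoking right exactness. Second, the ``symmetrically'' covering exactness of $\overline{G}_M$ on $\F(\overline{F}_M(\mathcal{C}))$ and $\overline{F}_M\overline{G}_M\simeq 1$ is not quite a formal symmetry and needs an argument (for instance, once $\overline{F}_M|_{\F(\mathcal{C})}$ is known to be exact, full, faithful and essentially surjective onto $\F(\overline{F}_M(\mathcal{C}))$, one can transport short exact sequences back along it); this is exactly the part that the paper's duality reduction to \cite[Theorem 2.10]{MPV} allows one to skip.
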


\begin{proof} It follows from Remark \ref{relacionesFuntores} that this result is dual to the statement of Theorem 2.10 in \cite{MPV}.
\end{proof}

For a given family $\underline{M}=\{M(i)\}^{t}_{i=1}$ of objects in $\modu(\Lambda),$ we set
$M=\bigoplus^{t}_{i=1}M(i)$. We now recall the definition of an Ext-injective stratifying system.

\begin{defin} \cite[Definition 1.1]{ES} An Ext-injective stratifying system $(\Theta,\underline{Y},\leq)$ of size $t$ in $\modu(\Lambda),$ consists
of two families of non-zero $\Lambda$-modules
$\Theta=\{\Theta(i)\}_{i=1}^t$ and $\underline{Y}=\{ Y(i)\}_{i=1}^t$,
with $Y(i)$ indecomposable for all $i$, and a linear order $\leq$ on the
set $[1,t]$, satisfying the following conditions.
\begin{itemize}
 \item[(a)] $\Hom_\Lambda(\Theta(i),\Theta(j))=0$ if $i>j$.
 \item[(b)] For each $i\in[1,t]$, there is an exact sequence
$$ 0\rightarrow\Theta(i)\rightarrow Y(i)\rightarrow Z(i)\rightarrow0,$$
with $Z(i)\in \F(\{\Theta(j):j<i\}).$
 \item[(c)] $\Ext_\Lambda^{1}(-,Y)|_{\Theta}=0.$
\end{itemize}
\end{defin}

\

Throughout the rest of the section $\underline{Y}$ denotes the family of indecomposable $\Lambda$-modules
$\underline{Y}=\{ Y(i)\}_{i=1}^t$, and we consider $\Gamma=\End(\leftidx{_\Lambda}{Y})^{op}$ and the functors $F=F_Y$, $G=G_Y$, $\overline{F}=\overline{F}_Y$ and $\overline{G}=\overline{G}_Y$.
We also consider the representative set ${}_{\Gamma^{op}}\overline{P}=\{{}_{\Gamma^{op}}\overline{P}
(i)\;:\;i\in[1,t]\}$ of indecomposable projective $\Gamma^{op}$-modules, where
${}_{\Gamma^{op}}\overline{P}(i)=\overline{F}(Y(i))$ for all $i$.

\begin{remark}\label{CoherenciaProy}
\rm Notice that we have two representative sets of indecomposable projective $\Gamma^{op}$-modules: ${}_{\Gamma^{op}}\overline{P}$ (defined above) and ${}_{\Gamma^{op}}P=\{{}_{\Gamma^{op}}P(i)=F(Y(i))^{*}\;:\;i\in[1,t]\}$. By Lemma \ref{lema gral.igualdad de proyectivos} (b), it follows that ${}_{\Gamma^{op}}\overline{P}(i)\simeq {}_{\Gamma^{op}}P(i)$ for all $i$. In particular, if $\leq$ is a linear order on $[1,t]$ and
$\leq^{op}$ the opposite order to $\leq$, there is no difference (up to isomorphism) between the family of standard (proper standard) $\Gamma^{op}$-modules computed by using either $({}_{\Gamma^{op}}\overline{P},\leq^{op})$ or $({}_{\Gamma^{op}}P,\leq^{op})$.
\end{remark}

We will need the following result, proven by E. Marcos, O. Mendoza and C. S\'aenz in \cite{MMS1},
which gives necessary and sufficient conditions for a family of indecomposable $\Lambda$-modules
$\underline{Y}$ to admit an Ext-injective stratifying system
$(\Theta,\underline{Y},\leq)$.

\begin{theorem} \cite[Theorem 2.3]{MMS1}
\label{teo. equival. para sist. estrat. extinyectivos,MMS}
Let $\underline{Y}=\{Y(i)\}^{t}_{i=1}$ be a set of pairwise non-isomorphic indecomposable
$\Lambda$-modules, and $\leq$ be a linear order on $[1,t].$ Let ${}_{\Gamma^{op}}\Delta$ be the family of standard $\Gamma^{op}$-modules corresponding to the pair
$({}_{\Gamma^{op}}\overline{P},\leq^{op})$, where $\leq^{op}$ is the opposite order of
$\leq$. Then the following statements, (I) and (II), are equivalent.
\begin{enumerate}
 \item[(I)] There exists a family $\Theta=\{\Theta(i)\}^{t}_{i=1}$ in $\modu(\Lambda)$ such that $(\Theta,\underline{Y},\leq)$ is an Ext-injective stratifying system.
 \item[(II)]
  \begin{enumerate}
   \item[(a)] $\Ext^{1}_{\Gamma^{op}}(\leftidx{_{\Gamma^{op}}}{\Delta},\leftidx{_{\Gamma^{op}}}{Y})=0$ and the pair $(\Gamma^{op},\leq^{op})$ is a standardly stratified algebra.
   \item[(b)] There is a full subcategory $\mathcal{A}$ of $\modu(\Lambda)$, closed under extensions, and such that $\underline{Y} \subseteq \mathcal{A}\cap\mathcal{A}^{\bot_1}$.
   \item[(c)] The restriction $\overline{F}|_{\mathcal{A}}:\mathcal{A}\rightarrow \F(\leftidx{_{\Gamma^{op}}}{\Delta})$ is an exact duality with quasi inverse $\overline{G}|_{\F(\leftidx{_{\Gamma^{op}}}{\Delta})}:\F(\leftidx{_{\Gamma^{op}}}{\Delta})\rightarrow \mathcal{A}$.
  \end{enumerate}
\end{enumerate}
Moreover, if one of these equivalent conditions hold, then $\mathcal{A}$ is uniquely determined (up to equivalences) by the family $\underline{Y}$. More precisely, $\mathcal{A}\simeq\F(\Theta)$ and $\Theta(i)\simeq\overline{G}(\leftidx{_{\Gamma^{op}}}{\Delta(i)})$ for all $i\in[1,t].$
\end{theorem}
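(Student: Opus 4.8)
The plan is to prove the two implications (I)$\Rightarrow$(II) and (II)$\Rightarrow$(I) separately. The main technical bridge is Proposition~\ref{Prop. equivalencia C2Check}, which upgrades a $C^{\vee}_2$-hypothesis into an exact duality realised by $\overline{F}=\Hom_{\Lambda}(-,Y)$, together with Lemma~\ref{lema gral.igualdad de proyectivos} and Remark~\ref{CoherenciaProy}, which give $\overline{G}\circ\overline{F}|_{\add(Y)}\simeq 1_{\add(Y)}$ and ensure that the standard $\Gamma^{op}$-modules attached to $({}_{\Gamma^{op}}\overline{P},\leq^{op})$ are exactly those in the statement. In both directions the category $\mathcal{A}$ of~(II) is $\F(\Theta)$ (up to equivalence), linked to $\F({}_{\Gamma^{op}}\Delta)$ through the contravariant functors $\overline{F}$ and $\overline{G}$.

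\emph{(I)$\Rightarrow$(II).} Put $\mathcal{A}=\F(\Theta)$, which is closed under extensions. The defining sequences $0\to\Theta(i)\to Y(i)\to Z(i)\to 0$ give $\underline{Y}\subseteq\F(\Theta)$, and $\Ext^{1}_{\Lambda}(-,Y)|_{\Theta}=0$ gives, by induction along $\Theta$-filtrations, $\Ext^{1}_{\Lambda}(\F(\Theta),Y(i))=0$ for all $i$; hence $\underline{Y}\subseteq\mathcal{A}\cap\mathcal{A}^{\bot_1}$, i.e.\ (II)(b). For (II)(c) I would invoke Proposition~\ref{Prop. equivalencia C2Check} with $\mathcal{C}=\Theta$ and $M=Y$: the hypothesis $Y\in\Theta^{\bot_1}$ is precisely condition~(c) of the stratifying system, and the substantive point is $\F(\Theta)\subseteq C^{\vee}_2(Y)$. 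Granting this, Proposition~\ref{Prop. equivalencia C2Check} yields an exact duality $\overline{F}|_{\F(\Theta)}\colon\F(\Theta)\to\F(\overline{F}(\Theta))$ with quasi-inverse $\overline{G}$, and it remains to show $\overline{F}(\Theta(i))\cong{}_{\Gamma^{op}}\Delta(i)$ relative to $\leq^{op}$. Applying $\overline{F}$ to $0\to\Theta(i)\to Y(i)\to Z(i)\to 0$ and using $\Ext^{1}_{\Lambda}(Z(i),Y)=0=\Ext^{1}_{\Lambda}(Y(i),Y)$ gives $0\to\overline{F}(Z(i))\to{}_{\Gamma^{op}}\overline{P}(i)\to\overline{F}(\Theta(i))\to 0$, so $\overline{F}(\Theta(i))$ is a quotient of ${}_{\Gamma^{op}}\overline{P}(i)$; computing its top and its composition factors via the duality and the vanishing $\Hom_{\Lambda}(\Theta(i),Y(j))=0$ for $j<i$ (a consequence of condition~(a) and $Y(j)\in\F(\{\Theta(k):k\le j\})$) identifies it as the largest factor of ${}_{\Gamma^{op}}\overline{P}(i)$ with composition factors among the ${}_{\Gamma^{op}}S(j)$, $j\ge i$, that is, $\overline{F}(\Theta(i))\cong{}_{\Gamma^{op}}\Delta(i)$. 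In particular $\overline{F}$ carries the $\Theta$-filtrations of the $Y(i)$ to $\Delta$-filtrations of the projective $\Gamma^{op}$-modules, so $(\Gamma^{op},\leq^{op})$ is standardly stratified, and the remaining $\Ext^{1}$-vanishing in (II)(a) follows by transporting $\Ext^{1}_{\Lambda}(-,Y)|_{\Theta}=0$ through the duality.

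\emph{(II)$\Rightarrow$(I).} Set $\Theta(i):=\overline{G}({}_{\Gamma^{op}}\Delta(i))$; by (II)(c) these lie in $\mathcal{A}=\overline{G}(\F({}_{\Gamma^{op}}\Delta))$ and are non-zero. Since $(\Gamma^{op},\leq^{op})$ is standardly stratified, each ${}_{\Gamma^{op}}\overline{P}(i)$ sits in an exact sequence $0\to U(i)\to{}_{\Gamma^{op}}\overline{P}(i)\to{}_{\Gamma^{op}}\Delta(i)\to 0$ with $U(i)\in\F(\{{}_{\Gamma^{op}}\Delta(j):j>^{op}i\})=\F(\{{}_{\Gamma^{op}}\Delta(j):j<i\})$; applying the exact duality $\overline{G}$ together with $\overline{G}\circ\overline{F}|_{\add(Y)}\simeq 1$ (Lemma~\ref{lema gral.igualdad de proyectivos}, with ${}_{\Gamma^{op}}\overline{P}(i)=\overline{F}(Y(i))$ and Remark~\ref{CoherenciaProy}) gives $0\to\Theta(i)\to Y(i)\to Z(i)\to 0$ with $Z(i)=\overline{G}(U(i))\in\F(\{\Theta(j):j<i\})$, i.e.\ condition~(b). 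Condition~(a) follows from $\Hom_{\Lambda}(\Theta(i),\Theta(j))\cong\Hom_{\Gamma^{op}}({}_{\Gamma^{op}}\Delta(j),{}_{\Gamma^{op}}\Delta(i))$, which vanishes for $i>j$ because ${}_{\Gamma^{op}}S(j)=\Top({}_{\Gamma^{op}}\Delta(j))$ is not a composition factor of ${}_{\Gamma^{op}}\Delta(i)$ (whose composition factors are ${}_{\Gamma^{op}}S(k)$ with $k\ge i>j$). Condition~(c) holds because $\Theta(i)\in\mathcal{A}$ and $\underline{Y}\subseteq\mathcal{A}^{\bot_1}$ by (II)(b), so $\Ext^{1}_{\Lambda}(\Theta(i),Y)=0$; and $Y(i)$ is indecomposable by hypothesis. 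Uniqueness of $\mathcal{A}$ is then forced by (II)(c): $\mathcal{A}=\overline{G}(\F({}_{\Gamma^{op}}\Delta))=\F(\Theta)$, with $\Theta(i)=\overline{G}({}_{\Gamma^{op}}\Delta(i))$, consistently with the identification $\overline{F}(\Theta(i))\cong{}_{\Gamma^{op}}\Delta(i)$ obtained above.

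The step I expect to be the main obstacle is the inclusion $\F(\Theta)\subseteq C^{\vee}_2(Y)$ used in (I)$\Rightarrow$(II): one must build, for each $X\in\F(\Theta)$, an exact sequence $0\to X\to M_0\to M_1\to M_2$ with $M_i\in\add(Y)$ that stays exact after applying $\overline{F}$. I would do this by induction on the length of a $\Theta$-filtration of $X$, starting the copresentation from the defining monomorphism $\Theta(i)\hookrightarrow Y(i)$ — whose cokernel $Z(i)$ is again $\Theta$-filtered, so the induction applies — and securing $\overline{F}$-exactness by choosing $M_0$ so that $\overline{F}(M_0)\to\overline{F}(X)$ is a projective cover, using the vanishings $\Ext^{1}_{\Lambda}(\F(\Theta),Y)=0$ already established. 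Everything else reduces to d\'evissage along filtrations, bookkeeping with composition factors, and keeping $\leq$ and $\leq^{op}$ apart.
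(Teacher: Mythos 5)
The paper does not actually prove this theorem: it is imported verbatim, with the citation \cite[Theorem 2.3]{MMS1}, so there is no in-paper proof to compare your attempt against. What one can say is that the argument you sketch is organized around the same machinery the paper does prove and use for neighboring results, namely Proposition~\ref{Prop. equivalencia C2Check} to upgrade a $C^{\vee}_2$-hypothesis to an exact duality via $\overline{F}$, the Snake-Lemma induction (as in Lemma~\ref{lema gral. suc. exacta Phi(i),Y(i), Z(i)}(b)) to produce copresentations inside $\F(\Theta)$, and the identification ${}_{\Gamma^{op}}\overline{P}(i)\simeq\overline{F}(Y(i))$ from Remark~\ref{CoherenciaProy}; and the (II)$\Rightarrow$(I) half proceeds exactly as the paper's own Lemma~\ref{lema gral. suc. exacta Phi(i),Y(i), Z(i)}(a) does.

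There is one step in (I)$\Rightarrow$(II) that does not work as stated. You claim that $\Ext^1_{\Gamma^{op}}({}_{\Gamma^{op}}\Delta,{}_{\Gamma^{op}}Y)=0$ ``follows by transporting $\Ext^1_\Lambda(-,Y)|_\Theta=0$ through the duality.'' But the contravariant duality $\overline{F}|_{\F(\Theta)}$ sends $Y(j)$ to the projective ${}_{\Gamma^{op}}\overline{P}(j)=\overline{F}(Y(j))$, not to the module ${}_{\Gamma^{op}}Y$ appearing in (II)(a) (these have different lengths in general). So transport yields the vacuous statement $\Ext^1_{\Gamma^{op}}({}_{\Gamma^{op}}\overline{P}(j),{}_{\Gamma^{op}}\Delta(i))=0$, and a genuinely different argument is needed to show $\Ext^1_{\Gamma^{op}}({}_{\Gamma^{op}}\Delta,{}_{\Gamma^{op}}Y)=0$ — compare how the paper handles the analogous issue in Corollary~\ref{corollary dado sist propio, existe Ext-iny. con condicion sobre tilting}, where ${}_{\Gamma^{op}}Q$ is identified with the characteristic tilting module and that identification, not the duality by itself, is what delivers the $\Ext^1$-vanishing. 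In addition, the identification $\overline{F}(\Theta(i))\cong{}_{\Gamma^{op}}\Delta(i)$ is only half-argued: you show $\overline{F}(\Theta(i))$ is a quotient of ${}_{\Gamma^{op}}\overline{P}(i)$ whose composition factors are $S(k)$ with $k\leq^{op}i$, but you also need to show it is the largest such quotient, which requires a multiplicity computation (e.g. $[\overline{F}(\Theta(i)):S(i)]$ recovered from $\Hom_\Lambda(\Theta(i),Y(i))\cong\End_\Lambda(\Theta(i))$ via the duality and likewise for $[{}_{\Gamma^{op}}\Delta(i):S(i)]$). These two points are real gaps rather than omitted routine detail.
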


\
If we only assume that condition (a) in (II) holds then condition (b) in the definition of an Ext-injective stratifying system holds, as we prove next.

\begin{lemma}
\label{lema gral. suc. exacta Phi(i),Y(i), Z(i)}
 With the hypothesis and notations of Theorem \ref{teo. equival. para sist. estrat. extinyectivos,MMS}, let $\Phi=\{\Phi(i)\}_{i=1}^t$ where $\Phi(i)= \overline{G}(\leftidx{_{\Gamma^{op}}}{\Delta(i)})$ for all $i.$  If the pair $(\Gamma^{op},\leq^{op})$ is a standardly stratified algebra and $\Ext^{1}_{\Gamma^{op}}(\leftidx{_{\Gamma^{op}}}{\Delta},\leftidx{_{\Gamma^{op}}}{Y})=0,$ then the following conditions hold.

\begin{enumerate}
   \item[(a)] For each $i\in[1,t]$, there is an exact sequence
$$ 0\rightarrow\Phi(i)\rightarrow Y(i)\rightarrow Z(i)\rightarrow0,$$
with $Z(i)\in \F(\{\Phi(j):j<i\}).$
   \item[(b)] If, moreover, $\Ext^{1}_\Lambda(\Phi,Y)=0$ then, for each $M\in\F(\Phi)$, there exists an exact sequence
$$0\rightarrow M\rightarrow Y'\rightarrow M'\rightarrow0$$
in $\F(\Phi)$ with $Y'\in\add\,(Y)$. In particular $\F(\Phi)\subseteq C^{\vee}_2(Y)$.
\end{enumerate}
\end{lemma}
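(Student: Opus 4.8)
The plan is to translate everything to the algebra $\Gamma^{op}$ via the dualities from Lemma \ref{lema gral.igualdad de proyectivos}, prove the corresponding statements there for proper costandard (or standard) $\Gamma^{op}$-modules, and then pull back. For part (a), recall that $\Phi(i)=\overline{G}(\leftidx{_{\Gamma^{op}}}{\Delta}(i))$, and that $\overline{G}$ restricted to $\proj\,(\Gamma^{op})$ is a duality onto $\add\,(Y)$ with $\overline{G}(\leftidx{_{\Gamma^{op}}}{\overline{P}}(i))=\overline{G}(\overline{F}(Y(i)))\simeq Y(i)$. For each $i$ there is the standard projective presentation $0\to \leftidx{_{\Gamma^{op}}}{\Delta}(i)\to \leftidx{_{\Gamma^{op}}}{\overline{P}}(i)$ is not exact in general; rather one uses that $\leftidx{_{\Gamma^{op}}}{\Delta}(i)$ is a quotient of $\leftidx{_{\Gamma^{op}}}{\overline{P}}(i)$ with kernel in $\F(\{\leftidx{_{\Gamma^{op}}}{\Delta}(j):j>^{op}i\})=\F(\{\leftidx{_{\Gamma^{op}}}{\Delta}(j):j<i\})$ — this is the defining property of standard modules over a standardly stratified algebra. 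Applying the exact duality $\overline{G}$ (exact on $\F(\leftidx{_{\Gamma^{op}}}{\Delta})$ by Theorem \ref{teo. equival. para sist. estrat. extinyectivos,MMS}, whose hypothesis (a) we are assuming, so that the conclusion $\mathcal{A}\simeq\F(\Phi)$, $\overline{F}|_{\mathcal{A}}$ a duality, etc., is available) turns the short exact sequence $0\to K(i)\to \leftidx{_{\Gamma^{op}}}{\overline{P}}(i)\to \leftidx{_{\Gamma^{op}}}{\Delta}(i)\to 0$ into $0\to\Phi(i)\to Y(i)\to\overline{G}(K(i))\to 0$, and the $\F$-filtration of $K(i)$ is carried to a $\Phi$-filtration of $Z(i):=\overline{G}(K(i))$ with factors among $\{\Phi(j):j<i\}$, giving (a).

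For part (b), fix $M\in\F(\Phi)$. Under the duality $\overline{F}$, $M$ corresponds to $\overline{F}(M)\in\F(\leftidx{_{\Gamma^{op}}}{\Delta})$. Over the standardly stratified algebra $\Gamma^{op}$ one has the standard fact that every module with a $\Delta$-filtration is a quotient of a projective module by a module again in $\F(\Delta)$: there is an exact sequence $0\to N\to P\to \overline{F}(M)\to 0$ with $P\in\proj\,(\Gamma^{op})$ and $N\in\F(\leftidx{_{\Gamma^{op}}}{\Delta})$ (build it one $\Delta$-factor at a time using the presentations $0\to K(i)\to\leftidx{_{\Gamma^{op}}}{\overline{P}}(i)\to\leftidx{_{\Gamma^{op}}}{\Delta}(i)\to 0$ and closure of $\F(\leftidx{_{\Gamma^{op}}}{\Delta})$ under extensions). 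Applying the exact duality $\overline{G}$ gives $0\to \overline{G}(\overline{F}(M))\to \overline{G}(P)\to \overline{G}(N)\to 0$, i.e. $0\to M\to Y'\to M'\to 0$ with $Y'=\overline{G}(P)\in\add\,(Y)$ and $M'=\overline{G}(N)\in\F(\Phi)$; here we use that $\overline{G}\overline{F}\simeq 1$ on $\mathcal{A}=\F(\Phi)$ and that $M\in\F(\Phi)$ so $\overline{F}(M)$ is really in $\F(\leftidx{_{\Gamma^{op}}}{\Delta})$, as guaranteed by Theorem \ref{teo. equival. para sist. estrat. extinyectivos,MMS}. Since $M'\in\F(\Phi)$ one can iterate, producing an $\add(Y)$-coresolution; the extra hypothesis $\Ext^1_\Lambda(\Phi,Y)=0$ (hence $\Ext^1_\Lambda(\F(\Phi),Y)=0$ since $\add(Y)=\Phi\cap\Phi^{\bot_1}$-type arguments, or directly because $Y\in\F(\Phi)^{\bot_1}$) ensures the application of $\overline{F}$ to the first two terms of this coresolution stays exact, so that the resulting four-term exact sequence $0\to M\to Y'\to Y''$ continued one more step witnesses $M\in C^{\vee}_2(Y)$; more carefully, take $0\to M\to Y'\to M'\to 0$ and $0\to M'\to Y''\to M''\to 0$, splice to $0\to M\to Y'\to Y''\to M''$, and check $\overline{F}$ preserves exactness of $Y''\to Y''$-part using $\Ext^1_\Lambda(M',Y)=0$.

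The main obstacle I expect is part (b): one must be careful that the coresolution $0\to M\to Y'\to Y''\to\cdots$ obtained by iterating stays ``$\overline{F}$-exact'' at the relevant spots, i.e. that applying $\overline{F}=\Hom_\Lambda(-,Y)$ preserves exactness of the truncation of length four appearing in the definition of $C^{\vee}_2(Y)$. The $\Ext^1$-vanishing hypothesis $\Ext^1_\Lambda(\Phi,Y)=0$ is exactly what is needed: it propagates to $\Ext^1_\Lambda(\F(\Phi),Y)=0$ by dévissage along $\Phi$-filtrations, which kills the obstruction to exactness of $\overline{F}$ applied to each short exact sequence $0\to M_k\to Y^{(k)}\to M_{k+1}\to 0$ with $M_k, M_{k+1}\in\F(\Phi)$. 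Assembling these into the required four-term complex and verifying its $\overline{F}$-image is exact is the delicate bookkeeping step; everything else is a formal transport along the dualities $\overline{F}$, $\overline{G}$ already set up, together with the standard homological properties of standardly stratified algebras applied to $(\Gamma^{op},\leq^{op})$.
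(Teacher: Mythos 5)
Your part (a) argument is, once the justification is unwound, essentially the paper's: use the short exact sequence $0\to U(i)\to {}_{\Gamma^{op}}\overline{P}(i)\to {}_{\Gamma^{op}}\Delta(i)\to 0$ with $U(i)\in\F(\{{}_{\Gamma^{op}}\Delta(j):j>^{op}i\})$, which exists because $(\Gamma^{op},\leq^{op})$ is standardly stratified, note that $\overline{G}=\Hom_{\Gamma^{op}}(-,Y)$ sends it to a short exact sequence because $\Ext^1_{\Gamma^{op}}({}_{\Gamma^{op}}\Delta,Y)=0$, and identify $\overline{G}({}_{\Gamma^{op}}\overline{P}(i))\simeq Y(i)$. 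However, you should not phrase this as relying on the \emph{conclusions} of Theorem~\ref{teo. equival. para sist. estrat. extinyectivos,MMS} (``$\overline{F}|_{\mathcal{A}}$ a duality, etc., is available''): those conclusions depend on all of condition (II), while the lemma assumes only (II)(a). Only the $\Ext^1$-vanishing (which is among the lemma's hypotheses) is needed for exactness of $\overline{G}$ on $\F({}_{\Gamma^{op}}\Delta)$.

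Part (b) has a genuine gap: the argument is circular. You build a $\Delta$-coresolution of $\overline{F}(M)$ over $\Gamma^{op}$ and pull it back through $\overline{G}$, relying on the facts that $\overline{F}$ maps $\F(\Phi)$ into $\F({}_{\Gamma^{op}}\Delta)$ and that $\overline{G}\,\overline{F}\simeq 1$ on $\F(\Phi)$. But these are precisely the statements that the paper obtains \emph{from} this lemma: Lemma~\ref{lema gral. suc. exacta Phi(i),Y(i), Z(i)}(b) yields $\F(\Phi)\subseteq C^{\vee}_2(Y)$, which via Proposition~\ref{Prop. equivalencia C2Check} is what establishes the duality $\overline{F}|_{\F(\Phi)}\colon\F(\Phi)\to\F({}_{\Gamma^{op}}\Delta)$ with quasi-inverse $\overline{G}$. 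At the point of proving this lemma, none of this is available; you may not assume $\overline{F}(M)\in\F({}_{\Gamma^{op}}\Delta)$ or $\overline{G}\overline{F}(M)\simeq M$. The paper's proof avoids the circularity by staying entirely over $\Lambda$: take a $\Phi$-filtration $0=M_0\subseteq M_1\subseteq\cdots\subseteq M_n=M$, use the short exact sequences $0\to\Phi(i_k)\to Y(i_k)\to Z(i_k)\to 0$ from part (a) together with the Snake Lemma and the fact that $Y\in\F(\Phi)^{\bot_1}$ (by d\'evissage from $\Ext^1_\Lambda(\Phi,Y)=0$) to produce, by induction on $n$, a short exact sequence $0\to M\to Y'\to M'\to 0$ with $Y'\in\add(Y)$ and $M'\in\F(\Phi)$; iterating and using $\Ext^1_\Lambda(\F(\Phi),Y)=0$ to keep $\overline{F}$ exact on each step gives $\F(\Phi)\subseteq C^{\vee}_2(Y)$. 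Your final paragraph correctly identifies the role of the $\Ext^1$-vanishing in the bookkeeping, but the coresolution must be built directly over $\Lambda$ as above rather than transported from $\Gamma^{op}$.
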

\begin{proof}
(a) Suppose that $\Ext^{1}_{\Gamma^{op}}(\leftidx{_{\Gamma^{op}}}{\Delta},\leftidx{_{\Gamma^{op}}}{Y})=0$. Then, $\overline{G}|_{\F(\leftidx{_{\Gamma^{op}}}{\Delta})}:\F(\leftidx{_{\Gamma^{op}}}{\Delta})\rightarrow\modu(\Lambda)$ is exact on $\F(\leftidx{_{\Gamma^{op}}}{\Delta})$. Since $(\Gamma^{op},\leq^{op})$ is a standardly stratified algebra, we have for each $i\in[1,t]$ an exact sequence in $\F(\leftidx{_{\Gamma^{op}}}{\Delta})$
$$0\rightarrow U(i)\rightarrow\leftidx{_{\Gamma^{op}}}{\overline{P}(i)}\rightarrow \leftidx{_{\Gamma^{op}}}{\Delta(i)}\rightarrow0,$$
with $U(i)\in\F(\{\leftidx{_{\Gamma^{op}}}{\Delta(j)}:j>^{op}i\})$. By applying $\overline{G}$ to the above sequence, we get the following exact sequence in $\modu(\Lambda)$
$$0\rightarrow \overline{G}(\leftidx{_{\Gamma^{op}}}{\Delta(i)})\rightarrow\overline{G}(\leftidx{_{\Gamma^{op}}}{\overline{P}(i)})\rightarrow \overline{G}(U(i))\rightarrow0.$$
Since $\Phi(i)=\overline{G}(\leftidx{_{\Gamma^{op}}}{\Delta(i)})$, $\overline{G}(\leftidx{_{\Gamma^{op}}}{\overline{P}(i)})=\overline{G}(\overline{F}(Y(i)))\simeq Y(i)$ and also $\overline{G}(U(i))\in\F(\{\Phi(j):j<i\})$, the condition (a) follows.
\

(b) Let $M\in\F(\Phi)$. We consider a $\Phi$-filtration of $M$
$$0=M_0\subseteq M_1\subseteq ...\subseteq M_{n-1}\subseteq M_n=M,$$
where $M_k/M_{k-1}\simeq\Phi(i_k)$, for all $1\leq k\leq n$.\\
Using (a), Snake's Lemma and the fact that $\leftidx{_\Lambda}{Y}\in\F(\Phi)^{\bot_1}$, we get an exact and commutative diagram in $\F(\Phi)$
$$\begin{CD}@. 0 @. 0 @. 0\\
@.@VVV @VVV @VVV\\
0@>>> \Phi(i_1) @>>> M_2 @>>> \Phi(i_2) @>>> 0 \\
 @. @VVV  @VVV @VVV \\
0@>>>Y(i_1) @>>> Y(i_1)\oplus Y(i_2)  @>>> Y(i_2) @>>> 0 \\
@. @VVV @VVV @VVV\\
0@>>> Z(i_1) @>>> Z_2 @>>> Z(i_2) @>>> 0 \\
@.@VVV @VVV @VVV\\
@. 0 @. 0 @. 0
\end{CD}  $$
In particular, the middle vertical sequence $0\rightarrow M_2\rightarrow Y(i_1)\oplus Y(i_2)\rightarrow Z_2\rightarrow 0$ gives us the required exact sequence for $n=2$. The result follows by iterating this argument.
\end{proof}

Using Proposition \ref{Prop. equivalencia C2Check} and Lemma \ref{lema gral. suc. exacta Phi(i),Y(i), Z(i)} (b), we can prove the following result, where for a given family $\underline{Y}=\{ Y(i)\}_{i=1}^t$ of $\Lambda$-modules, we consider $\Gamma=\End(\leftidx{_\Lambda}{Y})^{op}$ and the functors $\overline{F}=\overline{F}_Y$ and $\overline{G}=\overline{G}_Y$.

\begin{theorem}
\label{prop. adicional. equival. para sist. estrat. extinyectivos,MMS}
Let $\underline{Y}=\{Y(i)\}^{t}_{i=1}$ be a set of pairwise non-isomorphic indecomposable
$\Lambda$-modules, and $\leq$ be a linear order on $[1,t].$ Let ${}_{\Gamma^{op}}\Delta$ be the family of standard $\Gamma^{op}$-modules, computed by using the pair
$({}_{\Gamma^{op}}\overline{P},\leq^{op}),$ where $\leq^{op}$ is the opposite order of
$\leq$ and ${}_{\Gamma^{op}}\overline{P}(i)=\overline{F}(Y(i))$ for all $i$. Then, there
exists a family $\Theta=\{\Theta(i)\}^{t}_{i=1}$ in $\modu(\Lambda)$ such that $(\Theta,
\underline{Y},\leq)$ is an Ext-injective stratifying system if and only if the following
conditions hold.
\begin{itemize}
   \item[(a)] The pair $(\Gamma^{op},\leq^{op})$ is a standardly stratified algebra.
   \item[(b)] $\Ext^{1}_{\Gamma^{op}}(\leftidx{_{\Gamma^{op}}}{\Delta},\leftidx{_{\Gamma^{op}}}{Y})=0=\Ext^{1}_\Lambda(\overline{G}(\leftidx{_{\Gamma^{op}}}{\Delta}),\leftidx{_\Lambda}{Y})$.
\end{itemize}
If these conditions hold, the Ext-injective stratifying system $(\Theta,
\underline{Y},\leq)$ is uniquely determined (up to isomorphism) and
$\Theta(i)\simeq \overline{G}(\leftidx{_{\Gamma^{op}}}{\Delta(i)})$ for all $i\in[1,t].$
\end{theorem}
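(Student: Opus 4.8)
The plan is to deduce Theorem \ref{prop. adicional. equival. para sist. estrat. extinyectivos,MMS} from Theorem \ref{teo. equival. para sist. estrat. extinyectivos,MMS} by showing that the compact pair of conditions (a) and (b) stated here is equivalent to the three-part condition (II) of that theorem. One direction is easy: if $(\Theta,\underline{Y},\leq)$ is an Ext-injective stratifying system, then (II) holds by Theorem \ref{teo. equival. para sist. estrat. extinyectivos,MMS}, and in particular (II)(a) gives exactly our (a) together with the first equality in our (b). The remaining equality $\Ext^1_\Lambda(\overline{G}(\leftidx{_{\Gamma^{op}}}{\Delta}),\leftidx{_\Lambda}{Y})=0$ follows from the final assertion of Theorem \ref{teo. equival. para sist. estrat. extinyectivos,MMS}, namely $\Theta(i)\simeq\overline{G}(\leftidx{_{\Gamma^{op}}}{\Delta(i)})$, combined with condition (c) in the definition of an Ext-injective stratifying system, which says $\Ext^1_\Lambda(\Theta,Y)=0$.

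For the converse, I would assume (a) and (b) hold and set $\Phi(i)=\overline{G}(\leftidx{_{\Gamma^{op}}}{\Delta(i)})$ as in Lemma \ref{lema gral. suc. exacta Phi(i),Y(i), Z(i)}; the goal is to verify condition (II) of Theorem \ref{teo. equival. para sist. estrat. extinyectivos,MMS} with $\mathcal{A}=\F(\Phi)$. Condition (II)(a) is literally our hypotheses (a) and (b). For (II)(b), I take $\mathcal{A}=\F(\Phi)$; this is closed under extensions, so I must check $\underline{Y}\subseteq\F(\Phi)\cap\F(\Phi)^{\bot_1}$. That $Y(i)\in\F(\Phi)$ follows from part (a) of Lemma \ref{lema gral. suc. exacta Phi(i),Y(i), Z(i)} applied inductively (each $Y(i)$ is an extension of $Z(i)\in\F(\{\Phi(j):j<i\})$ by $\Phi(i)$), and $Y\in\F(\Phi)^{\bot_1}$ is exactly the second equality in our hypothesis (b) once one knows that $\Ext^1_\Lambda(\F(\Phi),Y)=0$ is equivalent to $\Ext^1_\Lambda(\Phi,Y)=0$ — which holds because $\Ext^1_\Lambda(-,Y)$ is additive on the filtration and $\F(\Phi)$ is built from the $\Phi(i)$ by extensions. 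For (II)(c), I invoke Proposition \ref{Prop. equivalencia C2Check} with $\mathcal{C}=\Phi$ and $M=Y$: the hypothesis $Y\in\Phi^{\bot_1}$ is available, and $\F(\Phi)\subseteq C^{\vee}_2(Y)$ is precisely the conclusion of Lemma \ref{lema gral. suc. exacta Phi(i),Y(i), Z(i)}(b), which applies since both equalities of our (b) are in force. Proposition \ref{Prop. equivalencia C2Check} then yields that $\overline{F}|_{\F(\Phi)}$ is an exact duality onto $\F(\overline{F}(\Phi))$ with quasi-inverse $\overline{G}|_{\F(\overline{F}(\Phi))}$. To finish, I identify $\overline{F}(\Phi)$ with $\leftidx{_{\Gamma^{op}}}{\Delta}$: indeed $\overline{F}\overline{G}\simeq 1$ on $\proj(\Gamma^{op})$ and more generally on $\F(\leftidx{_{\Gamma^{op}}}{\Delta})$ using the dual of the argument in Lemma \ref{lema gral.igualdad de proyectivos}, so $\overline{F}(\Phi(i))=\overline{F}\overline{G}(\leftidx{_{\Gamma^{op}}}{\Delta(i)})\simeq\leftidx{_{\Gamma^{op}}}{\Delta(i)}$, whence $\F(\overline{F}(\Phi))=\F(\leftidx{_{\Gamma^{op}}}{\Delta})$. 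This gives (II)(c), so Theorem \ref{teo. equival. para sist. estrat. extinyectivos,MMS} produces the desired family $\Theta$.

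The uniqueness and the formula $\Theta(i)\simeq\overline{G}(\leftidx{_{\Gamma^{op}}}{\Delta(i)})$ are then inherited directly from the last sentence of Theorem \ref{teo. equival. para sist. estrat. extinyectivos,MMS}, since $\mathcal{A}$ there is uniquely determined by $\underline{Y}$ and equals $\F(\Theta)$.

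I expect the main subtlety to be the bookkeeping around the two directions of the implication $\Ext^1_\Lambda(\Phi,Y)=0 \Longleftrightarrow \Ext^1_\Lambda(\F(\Phi),Y)=0$ and the corresponding statement on the $\Gamma^{op}$ side, i.e. making sure that the single homological vanishing hypotheses in (b) are strong enough to feed both Proposition \ref{Prop. equivalencia C2Check} and Lemma \ref{lema gral. suc. exacta Phi(i),Y(i), Z(i)}(b); this is routine but needs the closure of $\F(\Phi)$ under extensions and an easy induction along filtrations. Everything else is a matter of matching up the hypotheses with the already-established Theorem \ref{teo. equival. para sist. estrat. extinyectivos,MMS}, Proposition \ref{Prop. equivalencia C2Check}, and Lemma \ref{lema gral. suc. exacta Phi(i),Y(i), Z(i)}.
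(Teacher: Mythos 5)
Your proof is correct and follows essentially the same route as the paper's: both directions reduce to Theorem \ref{teo. equival. para sist. estrat. extinyectivos,MMS}, with the converse verified by setting $\mathcal{A}=\F(\overline{G}({}_{\Gamma^{op}}\Delta))$ and invoking Lemma \ref{lema gral. suc. exacta Phi(i),Y(i), Z(i)} together with Proposition \ref{Prop. equivalencia C2Check}. A small point in your favour is that you spell out the identification $\F(\overline{F}(\Phi))=\F({}_{\Gamma^{op}}\Delta)$ needed to read the conclusion of Proposition \ref{Prop. equivalencia C2Check} as condition (II)(c), which the paper leaves implicit.
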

\begin{proof}
Suppose that there exists a family $\Theta=\{\Theta(i)\}^{t}_{i=1}$ in $\modu(\Lambda)$ such that $(\Theta,\underline{Y},\leq)$ is an Ext-injective stratifying system. Then, by Theorem \ref{teo. equival. para sist. estrat. extinyectivos,MMS}, we get that there exists a full subcategory $\mathcal{A}$ of $\modu(\Lambda)$ such that $\overline{G}|_{\F(\leftidx{_{\Gamma^{op}}}{\Delta})}\subseteq\mathcal{A}$ and $\underline{Y} \subseteq \mathcal{A}^{\bot_1}$. Hence, $\Ext^{1}_\Lambda(\overline{G}(\leftidx{_{\Gamma^{op}}}{\Delta}),\leftidx{_\Lambda}{Y})=0$. The rest of the proof of (a) and (b) follows immediately from the same theorem.
\

Assume now that (a) and (b) hold. Since $(\Gamma^{op},\leq^{op})$ is a
standardly stratified algebra and $\Ext^{1}_{\Gamma^{op}}(\leftidx{_{\Gamma^{op}}}{\Delta},\leftidx{_{\Gamma^{op}}}{Y})=0$, the proof is completed by showing that (b) and (c) in Theorem \ref{teo. equival. para sist. estrat. extinyectivos,MMS} hold. Let $\mathcal{A}=\F(\overline{G}(\leftidx{_{\Gamma^{op}}}{\Delta}))$. By applying Lemma \ref{lema gral. suc. exacta Phi(i),Y(i), Z(i)} (a) and the equality $\Ext^{1}_\Lambda(\overline{G}(\leftidx{_{\Gamma^{op}}}{\Delta}),\leftidx{_\Lambda}{Y})=0$, we have that $\underline{Y} \subseteq \mathcal{A}\cap\mathcal{A}^{\bot_1}$. On the other hand, it follows from Lemma \ref{lema gral. suc. exacta Phi(i),Y(i), Z(i)} (b) that $\mathcal{A}\subseteq\C^{\vee}_2(Y)$. Then (c) in Theorem \ref{teo. equival. para sist. estrat. extinyectivos,MMS} holds by Proposition \ref{Prop. equivalencia C2Check}.
\end{proof}

%\begin{remark}\label{CoherenciaProy} Let $\underline{Y}=\{Y(i)\}^{t}_{i=1}$ be a set of pairwise
%non-isomorphic indecomposable $\Lambda$-modules, $\leq$ be a linear order on $[1,t],$ and
%$\leq^{op}$ the opposite order of $\leq.$ Consider the functors attached to $Y$ $$\modu\,(\Lambda)\underset{G}{\overset{F}{\rightleftarrows}}\modu\,(\Gamma)\quad\text{and}\quad\modu\,(\Lambda)\underset{\overline{G}}{\overset{\overline{F}}{\rightleftarrows}}\modu\,(\Gamma^{op}),$$
%where $\Gamma:=\End(\leftidx{_\Lambda}{Y})^{op}.$ We have two representative sets of indecomposable projective $\Gamma^{op}$-modules:  ${}_{\Gamma^{op}}\overline{P}=\{{}_{\Gamma^{op}}\overline{P}
%(i):=\overline{F}(Y(i))\;:\;i\in[1,t]\}$ and ${}_{\Gamma^{op}}P=\{{}_{\Gamma^{op}}P
%(i):=F(Y(i))^{*}\;:\;i\in[1,t]\}.$ Then, by \ref{lema gral.igualdad de proyectivos} (b), it follows that ${}_{\Gamma^{op}}\overline{P}(i)\simeq {}_{\Gamma^{op}}P(i)$ for all $i.$ In particular, there is not difference (up to isomorphism) between the family of standard (proper standard) $\Gamma^{op}$-modules computed by using either
%$({}_{\Gamma^{op}}\overline{P},\leq^{op})$ or $({}_{\Gamma^{op}}P,\leq^{op}).$
%\end{remark}

An algebra $\Lambda$ is standardly stratified if and only if $\leftidx{_\Lambda}{\Lambda}$ is filtered by the standard modules. The fact that this is the case if and only if $D(\Lambda_\Lambda)$ is filtered by the corresponding proper costandard modules (see \cite{Dlab}, Proposition 2.2) and also the Remark \ref{CoherenciaProy} allow us to adapt the proof of Lemma \ref{lema gral. suc. exacta Phi(i),Y(i), Z(i)} to prove the following result, which will be useful later.

\begin{lemma}
\label{lema gral. (con el Tor) suc. exacta Z(i),Y(i), Phi(i)}
Let $\underline{Y}=\{Y(i)\}^{t}_{i=1}$ be a set of pairwise non-isomorphic indecomposable $\Lambda$-modules and $\leq$ be a linear order on $[1,t].$ Let $\leftidx{_{\Gamma^{op}}}{\overline{\nabla}}$ be the family of proper costandard $\Gamma^{op}$-modules computed by using the pair
$({}_{\Gamma^{op}}I,\leq^{op}),$ where ${}_{\Gamma^{op}}I$ is the representative set of injective
$\Gamma^{op}$-modules defined as ${}_{\Gamma^{op}}I(i)=D(F(Y(i)))$ for all $i.$ If $(\Gamma^{op},\leq^{op})$ is a standardly stratified algebra, $\Tor^{\Gamma}_{1}(Y,\leftidx{_\Gamma}{\overline{\Delta}})=0$ and $\Phi(i)= G(\leftidx{_\Gamma}{\overline{\Delta}}(i))$ for all $i$, then the following conditions hold.

\begin{enumerate}
   \item[(a)] For each $i\in[1,t]$, there is an exact sequence
$$ 0\rightarrow Z(i)\rightarrow Y(i)\rightarrow \Phi(i)\rightarrow0,$$
with $Z(i)\in \F(\{\Phi(j):j\leq i\}).$
   \item[(b)] If, moreover, $\Ext^{1}_\Lambda(Y,\Phi)=0$ then, for each $M\in\F(\Phi)$, there exists an exact sequence
$$0\rightarrow M'\rightarrow Y'\rightarrow M\rightarrow0$$
in $\F(\Phi)$ with $Y'\in\add\,(Y)$. In particular $\F(\Phi)\subseteq\C^{\wedge}_2(Y)$.
\end{enumerate}
\end{lemma}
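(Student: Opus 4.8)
The plan is to mirror the proof of Lemma~\ref{lema gral. suc. exacta Phi(i),Y(i), Z(i)}, dualizing systematically: the contravariant functors $\overline{F},\overline{G}$ there are replaced by the covariant ones $F=\Hom_\Lambda(Y,-)$ and $G=Y\otimes_\Gamma-$, the role played there by the vanishing of $\Ext^1_{\Gamma^{op}}({}_{\Gamma^{op}}\Delta,{}_{\Gamma^{op}}Y)$ is now played by the vanishing of $\Tor^\Gamma_1(Y,{}_\Gamma\overline{\Delta})$, and the fact ``$\Gamma^{op}$ standardly stratified $\Rightarrow$ its indecomposable projectives are $\Delta$-filtered'' is replaced by its dual. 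Concretely, by \cite{Dlab}, Proposition~2.2, $(\Gamma^{op},\leq^{op})$ is standardly stratified if and only if $D((\Gamma^{op})_{\Gamma^{op}})=\bigoplus_i {}_{\Gamma^{op}}I(i)$ is filtered by the proper costandard $\Gamma^{op}$-modules; applying the duality $D\colon\modu(\Gamma^{op})\to\modu(\Gamma)$ and using $D({}_{\Gamma^{op}}I(i))=F(Y(i))$ and $D({}_{\Gamma^{op}}\overline{\nabla}(i))={}_\Gamma\overline{\Delta}(i)$, this is in turn equivalent to: every indecomposable projective $\Gamma$-module lies in $\F({}_\Gamma\overline{\Delta})$, where ${}_\Gamma\overline{\Delta}$ is computed using the representative set ${}_\Gamma P(i):=F(Y(i))$ (Remark~\ref{CoherenciaProy} and Lemma~\ref{lema gral.igualdad de proyectivos} guarantee that this is the choice matching the set ${}_{\Gamma^{op}}I$ fixed in the statement, so no ambiguity arises).

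For part~(a): since ${}_\Gamma P(i)\in\F({}_\Gamma\overline{\Delta})$ and $\Top({}_\Gamma P(i))$ is simple, the top factor of any proper-standard filtration of ${}_\Gamma P(i)$ is forced to be ${}_\Gamma\overline{\Delta}(i)$, so there is an exact sequence $0\to V(i)\to{}_\Gamma P(i)\to{}_\Gamma\overline{\Delta}(i)\to0$ with $V(i)\in\F(\{{}_\Gamma\overline{\Delta}(j):j\leq i\})$. A check of the orders --- recalling that ${}_\Gamma\overline{\Delta}$ is taken with respect to $\leq^{op}$, and that proper standard modules, unlike standard ones, may repeat in a filtration, so that ${}_\Gamma\overline{\Delta}(i)$ itself may occur in $V(i)$ --- shows the relevant indices are exactly the $j\leq i$ in $\leq$ (this turns the strict ``$j<i$'' of Lemma~\ref{lema gral. suc. exacta Phi(i),Y(i), Z(i)} into the inclusive ``$j\leq i$'' of the present statement). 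Since $\Tor^\Gamma_1(Y,{}_\Gamma\overline{\Delta})=0$, a dimension-shift argument shows $G$ is exact on $\F({}_\Gamma\overline{\Delta})$; applying $G$ to the sequence above, and using $G(F(Y(i)))\simeq Y(i)$ together with $\Phi(i)=G({}_\Gamma\overline{\Delta}(i))$, yields the desired exact sequence $0\to Z(i)\to Y(i)\to\Phi(i)\to0$ with $Z(i)=G(V(i))\in\F(\{\Phi(j):j\leq i\})$.

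For part~(b): assume in addition that $\Ext^1_\Lambda(Y,\Phi)=0$; then by the long exact $\Ext$-sequence and induction on the length of a $\Phi$-filtration one gets $\Ext^1_\Lambda(Y,M)=0$ for every $M\in\F(\Phi)$, and from the sequences of part~(a) also $\add(Y)\subseteq\F(\Phi)$. Given $M\in\F(\Phi)$ with $\Phi$-filtration $0=M_0\subseteq\cdots\subseteq M_n=M$ and $M_k/M_{k-1}\simeq\Phi(i_k)$, one reproduces the $3\times3$-diagram argument from the proof of Lemma~\ref{lema gral. suc. exacta Phi(i),Y(i), Z(i)} with the arrows reversed: for $n=2$, the epimorphism $Y(i_2)\to\Phi(i_2)$ lifts along $M_2\to\Phi(i_2)$ because $\Ext^1_\Lambda(Y(i_2),\Phi(i_1))=0$, and combining this lift with $Y(i_1)\to\Phi(i_1)\hookrightarrow M_2$ one obtains an exact sequence $0\to M'\to Y(i_1)\oplus Y(i_2)\to M_2\to0$ in $\F(\Phi)$, the snake lemma exhibiting $M'$ as an extension of $Z(i_2)$ by $Z(i_1)$, hence $M'\in\F(\Phi)$; iterating over the filtration gives the required $0\to M'\to Y'\to M\to0$ in $\F(\Phi)$ with $Y'\in\add(Y)$. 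Finally, iterating this construction produces an exact sequence $Y_2\to Y_1\to Y_0\to M\to0$ with $Y_k\in\add(Y)$; at each step the kernel lies in $\F(\Phi)$ and hence is $\Ext^1_\Lambda(Y,-)$-acyclic, so $F=\Hom_\Lambda(Y,-)$ keeps the sequence exact, giving $M\in\C^{\wedge}_2(Y)$.

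The step I expect to require the most care is the order bookkeeping in part~(a): one must verify that passing from ``$(\Gamma^{op},\leq^{op})$ standardly stratified'' through \cite{Dlab} and the duality $D$ produces a proper-standard filtration of ${}_\Gamma P(i)$ whose remaining factors are indexed by the $j\leq i$ in the original order $\leq$ rather than by $j\geq i$, and --- relatedly --- that the representative set of indecomposable projective $\Gamma$-modules implicitly fixed in the definition of ${}_\Gamma\overline{\Delta}$ is exactly $\{F(Y(i))\}_i$; this is precisely where Remark~\ref{CoherenciaProy} enters.
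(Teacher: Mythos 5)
Your proposal is correct and follows essentially the route the paper intends: the paper offers no separate argument for this lemma, stating only that Dlab's equivalence (projectives filtered by standards iff injectives filtered by proper costandards) together with Remark~\ref{CoherenciaProy} allows one to adapt the proof of Lemma~\ref{lema gral. suc. exacta Phi(i),Y(i), Z(i)}, and your write-up is exactly that adaptation, including the correct replacement of the strict index bound $j<i$ by $j\leq i$ coming from the possible repetition of proper standard factors.
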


\section{Relation between proper costratifying systems and Ext-injective stratifying systems.}

Generalizing results of C. M. Ringel for quasi-hereditary algebras, it is proven in \cite{AHLU} and \cite{Platzeck Reiten} that, for a standardly stratified algebra $(\Lambda,\leq)$, there exists a tilting $\Lambda$-module $T=\{T(i)\}^{n}_{i=1}$, called the characteristic tilting module, such that $\add(T)=\F(\leftidx{_\Lambda}{\Delta})\cap\F(\leftidx{_\Lambda}{\Delta})^{\bot_1}$ and the pair $(\End_\Lambda(T),\leq^{op})$ is again a standardly stratified algebra. Moreover, $(\leftidx{_\Lambda}{\Delta},\{T(i)\}^{n}_{i=1},\leq)$ is an Ext-injective stratifying system and $(\leftidx{_\Lambda}{\overline{\nabla}},\{T(i)\}^{n}_{i=1},\leq)$ is a proper costratifying system. This raises the question:\\
Given a set $\Q=\{Q(i)\}^{t}_{i=1}$ of pairwise non-isomorphic indecomposable $\Lambda$-modules and a linear order $\leq$ on $[1,t]$, how does the existence of a proper costratifying system $(\Psi,\Q,\leq)$ relate to the existence of an Ext-injective stratifying system $(\Theta,\Q,\leq)$?\\
This section is devoted to answer this question.\\
We use throughout the following notation. Let $\Q=\{Q(i)\}^{t}_{i=1}$ be a set of pairwise
non-isomorphic indecomposable $\Lambda$-modules, $\leq$ be a linear order on $[1,t],$ and
$Q=\bigoplus_{i=1}^{t}\,Q(i).$ We consider $\Gamma=\End(\leftidx{_\Lambda}{Q})^{op}$ and the functors $F=F_Q$, $G=G_Q$, $\overline{F}=\overline{F}_Q$ and $\overline{G}=\overline{G}_Q$.
%$$\modu\,(\Lambda)\underset{G}{\overset{F}{\rightleftarrows}}\modu\,(\Gamma)\quad\text{and}\quad\modu\,(\Lambda)\underset{\overline{G}}{\overset{\overline{F}}{\rightleftarrows}}\modu\,(\Gamma^{op}),$$ where $\Gamma:=\End(\leftidx{_\Lambda}{Q})^{op}.$
According with Remark \ref{CoherenciaProy}, the family $\leftidx{_{\Gamma^{op}}}{\Delta}$
can be computed by using either $({}_{\Gamma^{op}}\overline{P},\leq^{op})$ or
$({}_{\Gamma^{op}}P,\leq^{op}),$ where ${}_{\Gamma^{op}}\overline{P}
(i)=\overline{F}(Q(i))$ and ${}_{\Gamma^{op}}P(i)=F(Q(i))^{*}$ for all $i$. We also consider the family  ${}_{\Gamma}{\overline{\Delta}}$, which is computed with the order $\leq^{op}$ on $[1,t]$, by using the representative set ${}_{\Gamma}P$ of projective indecomposable $\Gamma$-modules, where ${}_{\Gamma}P(i)=F(Q(i))$ for all $i$.

\subsection{From proper costratifying systems to Ext-injective stratifying systems.}

\

Let $(\Psi,\Q,\leq)$ be a proper costratifying system of size $t$ in $\modu\,(\Lambda)$. We recall from \cite[Theorem 4.3]{MPV}, that $(\Gamma^{op},\leq^{op})$ is a standardly stratified algebra and the restriction $F|_{\F(\Psi)}:\F(\Psi)\rightarrow \F(\leftidx{_\Gamma}{\overline{\Delta}})$ is an equivalence with quasi inverse $G|_{\F(\leftidx{_\Gamma}{\overline{\Delta}})}:\F(\leftidx{_\Gamma}{\overline{\Delta}})\rightarrow \F(\Psi)$.
\

In our next theorem we state, for a given proper costratifying system $(\Psi,\Q,\leq)$, necessary and sufficient conditions for the existence of a family $\Theta=\{\Theta(i)\}^{t}_{i=1},$ in $\modu(\Lambda)$ such that $(\Theta,\Q,\leq)$ is an Ext-injective stratifying system.

\begin{theorem}
\label{teo. dado sistema estrat.propio, existe sist. estart. extinyectivo.}
Let $(\Psi,\Q,\leq)$ be a proper costratifying system of size $t$ in $\modu\,(\Lambda)$. Then, the following conditions are equivalent.
\begin{enumerate}
\item[(a)] There exists a family $\Theta=\{\Theta(i)\}^{t}_{i=1}$ in $\modu(\Lambda)$ such that $(\Theta,\Q,\leq)$ is an Ext-injective stratifying system.
\item[(b)] $\Ext^{1}_{\Gamma^{op}}(\leftidx{_{\Gamma^{op}}}{\Delta},\leftidx{_{\Gamma^{op}}}{Q})=0=\Ext^{1}_{\Lambda}(\overline{G}(\leftidx{_{\Gamma^{op}}}{\Delta}),\leftidx{_\Lambda}{Q})$.
\end{enumerate}
If these conditions hold, the system $(\Theta,\Q,\leq)$ is uniquely determined (up to isomorphism) and $\Theta(i)\simeq\overline{G}(\leftidx{_{\Gamma^{op}}}{\Delta}(i))$ for all $i$.
\end{theorem}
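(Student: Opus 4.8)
The strategy is to reduce Theorem \ref{teo. dado sistema estrat.propio, existe sist. estart. extinyectivo.} to Theorem \ref{prop. adicional. equival. para sist. estrat. extinyectivos,MMS} applied with $\underline{Y}=\Q$. For that latter theorem to apply, condition (a) of its hypotheses --- namely that $(\Gamma^{op},\leq^{op})$ is a standardly stratified algebra --- must already be available. But this is exactly what \cite[Theorem 4.3]{MPV} gives us for free from the existence of the proper costratifying system $(\Psi,\Q,\leq)$: indeed $(\Gamma^{op},\leq^{op})$ is standardly stratified and $F|_{\F(\Psi)}\colon\F(\Psi)\to\F(\leftidx{_\Gamma}{\overline{\Delta}})$ is an equivalence with quasi-inverse $G|_{\F(\leftidx{_\Gamma}{\overline{\Delta}})}$. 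So the only remaining content of Theorem \ref{prop. adicional. equival. para sist. estrat. extinyectivos,MMS} is the equivalence between "(a) of Theorem \ref{teo. dado sistema estrat.propio, existe sist. estart. extinyectivo.} holds" and condition (b) of that same theorem, and this is precisely condition (b) of Theorem \ref{prop. adicional. equival. para sist. estrat. extinyectivos,MMS} once (a) there is known to hold. Hence, once I record that $(\Gamma^{op},\leq^{op})$ is standardly stratified, the statement follows directly, including the uniqueness clause and the formula $\Theta(i)\simeq\overline{G}(\leftidx{_{\Gamma^{op}}}{\Delta}(i))$.

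First I would invoke \cite[Theorem 4.3]{MPV} to obtain that $(\Gamma^{op},\leq^{op})$ is a standardly stratified algebra. Then I would simply quote Theorem \ref{prop. adicional. equival. para sist. estrat. extinyectivos,MMS} with $\underline{Y}=\Q$: since its condition (a) is now verified, the existence of a family $\Theta=\{\Theta(i)\}_{i=1}^{t}$ with $(\Theta,\Q,\leq)$ an Ext-injective stratifying system is equivalent to its condition (b), which reads $\Ext^{1}_{\Gamma^{op}}(\leftidx{_{\Gamma^{op}}}{\Delta},\leftidx{_{\Gamma^{op}}}{Q})=0=\Ext^{1}_{\Lambda}(\overline{G}(\leftidx{_{\Gamma^{op}}}{\Delta}),\leftidx{_\Lambda}{Q})$. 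This is exactly condition (b) of the present theorem, so (a) $\Leftrightarrow$ (b) is established. The final assertions --- uniqueness up to isomorphism of $(\Theta,\Q,\leq)$ and the identification $\Theta(i)\simeq\overline{G}(\leftidx{_{\Gamma^{op}}}{\Delta}(i))$ --- are transported verbatim from the corresponding statements in Theorems \ref{teo. equival. para sist. estrat. extinyectivos,MMS} and \ref{prop. adicional. equival. para sist. estrat. extinyectivos,MMS}.

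I do not anticipate a genuine obstacle here; the theorem is essentially a packaging result. The one point requiring a small amount of care is the compatibility of the two descriptions of the projective $\Gamma^{op}$-modules, and hence of the family $\leftidx{_{\Gamma^{op}}}{\Delta}$: one must make sure that the $\leftidx{_{\Gamma^{op}}}{\Delta}$ appearing in condition (b) --- computed via either $({}_{\Gamma^{op}}\overline{P},\leq^{op})$ or $({}_{\Gamma^{op}}P,\leq^{op})$ --- agrees (up to isomorphism) with the one entering Theorem \ref{prop. adicional. equival. para sist. estrat. extinyectivos,MMS}. This is precisely Remark \ref{CoherenciaProy} together with Lemma \ref{lema gral.igualdad de proyectivos}(b), so the matching is automatic. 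The rest is just citing the earlier results.
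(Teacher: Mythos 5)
Your proof is correct and follows the same route as the paper: invoke \cite[Theorem 4.3]{MPV} to get that $(\Gamma^{op},\leq^{op})$ is standardly stratified, then apply Theorem \ref{prop. adicional. equival. para sist. estrat. extinyectivos,MMS} with $\underline{Y}=\Q$. The extra care you take about Remark \ref{CoherenciaProy} is consistent with, and implicit in, the paper's one-line proof.
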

\begin{proof}
 The proof follows directly from Theorem \ref{prop. adicional. equival. para sist. estrat. extinyectivos,MMS}, since by \cite[Theorem 4.3]{MPV} we know that $(\Gamma^{op},\leq^{op})$ is a standardly stratified algebra.
\end{proof}

\begin{example}
\rm In the following example we give a proper costratifying system $(\Psi,\Q,\leq)$ and apply Theorem \ref{teo. dado sistema estrat.propio, existe sist. estart. extinyectivo.} to get an Ext-injective stratifying system $(\Theta,\Q,\leq)$.\\
Let $\Lambda$ be the path algebra given by the quiver
$$\underset{1}{\circ}\longrightarrow\underset{2}{\circ}\longrightarrow\underset{3}{\circ},$$
with the natural order $1\leq 2\leq 3.$ Consider $\Psi=\{\Psi(1)=3,\;\Psi(2)=1,\;\Psi(3)={1 \atop 2}\}$ and $\Q=\{Q(1)=3,\;
Q(2)=1,\; Q(3)=\small\begin{array}{c}1 \\2 \\
3\end{array}\}.$ Then $(\Psi,\Q,\leq)$ is a proper costratifying system of size 3 in $\modu(\Lambda)$. In this case, the algebra
$\Gamma^{op}=\End_\Lambda(Q)$ is given by the quiver
$$\underset{1}{\circ}\overset{\varepsilon}{\longrightarrow}\underset{3}{\circ}\overset{\mu}{\longrightarrow}\underset{2}{\circ}$$
with the relation $\mu\varepsilon=0$. We consider $(\Gamma^{op},\leq^{op})$, where
$3\leq^{op}2\leq^{op}1.$ Then the corresponding standard modules are
$\leftidx{_{\Gamma^{op}}}{\Delta}=\{\leftidx{_{\Gamma^{op}}}{\Delta}(1)=\small\begin{array}{c}1 \\
 3\end{array}, \leftidx{_{\Gamma^{op}}}{\Delta}(2)=2, \leftidx{_{\Gamma^{op}}}{\Delta}(3)=3\}$, and $\leftidx{_{\Gamma^{op}}}{Q}=\small\begin{array}{c}
                                                           3 \\
                                                           2
                                                         \end{array}\oplus\; 3\;\oplus\small\begin{array}{c}
                                                                      1 \\
                                                                      3
                                                                    \end{array}$.
 Since $\leftidx{_{\Gamma^{op}}}{\Delta}(1)$ and $\leftidx{_{\Gamma^{op}}}{\Delta}(2)$ are projective modules, and $\leftidx{_{\Gamma^{op}}}{Q}(1)$ and $\leftidx{_{\Gamma^{op}}}{Q}(3)$ are injective modules, we get that $\Ext^{1}_{\Gamma^{op}}(\leftidx{_{\Gamma^{op}}}{\Delta},\leftidx{_{\Gamma^{op}}}{Q})=0$. It remains to check that $\Ext^{1}_{\Lambda}(\overline{G}(\leftidx{_{\Gamma^{op}}}{\Delta}),\leftidx{_\Lambda}{Q})=0$. Since $\overline{G}(\leftidx{_{\Gamma^{op}}}{\Delta}(1))=\overline{G}(\leftidx{_{\Gamma^{op}}}{Q}(3))=\leftidx{_\Lambda}{P}(3)$ and  $\overline{G}(\leftidx{_{\Gamma^{op}}}{\Delta}(3))=\overline{G}(\leftidx{_{\Gamma^{op}}}{Q}(2))=\leftidx{_\Lambda}{P}(2)$ are projective modules, then the required condition holds. On the other hand, a computation shows that $\overline{G}(\leftidx{_{\Gamma^{op}}}{\Delta}(2))=1$ and $\Ext^{1}_{\Lambda}(\overline{G}(\leftidx{_{\Gamma^{op}}}{\Delta}(2)),\leftidx{_\Lambda}{Q})=0$. Thus, by Theorem \ref{teo. dado sistema estrat.propio, existe sist. estart. extinyectivo.}, $(\Theta,\Q,\leq)$ is an Ext-injective stratifying system with $\Theta(1)\simeq\overline{G}(\leftidx{_{\Gamma^{op}}}{\Delta}(1))=3$, $\Theta(2)\simeq\overline{G}(\leftidx{_{\Gamma^{op}}}{\Delta}(2))=1$ and $\Theta(3)\simeq\overline{G}(\leftidx{_{\Gamma^{op}}}{\Delta}(3))=\small\begin{array}{c}
                                                                      2 \\
                                                                      3
                                                                    \end{array}$.

\end{example}

\subsection{From Ext-injective stratifying systems to proper costratifying systems.}

 \

 The aim of this subsection is to find, for a given Ext-injective stratifying system
 $(\Theta,\Q,\leq)$, necessary and sufficient conditions for the existence of a family
 $\Psi=\{\Psi(i)\}^{t}_{i=1}$ in $\modu(\Lambda)$ such that $(\Psi,\Q,\leq)$ is a proper
 costratifying system. We first consider the more general problem of studying the existence of
 such a proper costratifying system $(\Psi,\Q,\leq)$ assuming only that $\Q=\{Q(i)\}^{t}_{i=1}$
 is a family of pairwise non-isomorphic indecomposable $\Lambda$-modules. In analogy with  \cite[Theorem 2.3]{MMS1} and Theorem \ref{prop. adicional. equival. para sist. estrat. extinyectivos,MMS}, we prove the following result.

\begin{theorem}
\label{teo. equival. para sist. estrat. propios}
Let $\Q=\{Q(i)\}^{t}_{i=1}$ be a set of pairwise non-isomorphic indecomposable $\Lambda$-modules. Then, the following conditions (I), (II) and (III) are equivalent.
\begin{enumerate}
 \item[(I)]
  \begin{enumerate}
   \item[(a)] $(\Gamma^{op},\leq^{op})$ is a standardly stratified algebra and $\text{\rm Tor}^{\Gamma}_1(Q,\leftidx{_\Gamma}{\overline{\Delta}})=0$.
   \item[(b)] There is a full subcategory $\mathcal{B}$ of $\modu\,(\Lambda)$, closed under extensions, and such that $Q \subseteq \mathcal{B}\cap\leftidx{^{\bot_1}}{\mathcal{B}}$.
   \item[(c)] The restriction $F|_{\mathcal{B}}:\mathcal{B}\rightarrow \F(\leftidx{_\Gamma}{\overline{\Delta}})$ is an exact equivalence of categories with $G|_{\F(\leftidx{_\Gamma}{\overline{\Delta}})}:\F(\leftidx{_\Gamma}{\overline{\Delta}})\rightarrow \mathcal{B}$ as a quasi inverse of $F|_{\mathcal{B}}$.
  \end{enumerate}
 \item[(II)] There exists a family $\Psi=\{\Psi(i)\}^{t}_{i=1}$ in $\modu(\Lambda)$ such that $(\Psi,\Q,\leq)$ is a proper costratifying system.
 \item[(III)]
 \begin{enumerate}
   \item[(a)] $(\Gamma^{op},\leq^{op})$ is a standardly stratified algebra.
   \item[(b)] $\text{\rm Tor}^{\Gamma}_1(Q,\leftidx{_\Gamma}{\overline{\Delta}})=0=\Ext^{1}_\Lambda(\leftidx{_\Lambda}{Q},G(\leftidx{_\Gamma}{\overline{\Delta}}))$.
\end{enumerate}
\end{enumerate}
Moreover, if these conditions hold, then $\mathcal{B}$ is uniquely determined by the family $Q$. More precisely, $\mathcal{B}\simeq\F(\Psi)$ and $\Psi(i)\simeq G(\leftidx{_\Gamma}{\overline{\Delta}}(i))$ for all $i\in[1,t]$.
\end{theorem}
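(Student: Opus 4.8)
The plan is to prove the cycle $\mathrm{(II)}\Rightarrow\mathrm{(I)}\Rightarrow\mathrm{(III)}\Rightarrow\mathrm{(II)}$ and then the uniqueness clause. The whole argument runs parallel to the proofs of Theorem \ref{teo. equival. para sist. estrat. extinyectivos,MMS} and Theorem \ref{prop. adicional. equival. para sist. estrat. extinyectivos,MMS}, with the dualities $\overline{F},\overline{G}$, the standard $\Gamma^{op}$-modules and the class $C^{\vee}_2$ replaced throughout by the adjoint pair $F=F_Q$, $G=G_Q$, the proper standard $\Gamma$-modules ${}_{\Gamma}\overline{\Delta}$ and the class $C^{\wedge}_2(Q)$. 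In every case the candidate family is $\Psi(i):=G({}_{\Gamma}\overline{\Delta}(i))$ and the candidate subcategory is $\mathcal{B}:=\F(\Psi)$.

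For $\mathrm{(II)}\Rightarrow\mathrm{(I)}$, let $(\Psi,\Q,\leq)$ be a proper costratifying system. By \cite[Theorem 4.3]{MPV}, $(\Gamma^{op},\leq^{op})$ is standardly stratified, $\Tor^{\Gamma}_1(Q,{}_{\Gamma}\overline{\Delta})=0$, and $F|_{\F(\Psi)}\colon\F(\Psi)\to\F({}_{\Gamma}\overline{\Delta})$ is an equivalence with quasi-inverse $G|_{\F({}_{\Gamma}\overline{\Delta})}$; exactness of $F$ on $\F(\Psi)$ follows from Definition \ref{def de sist. estricto}(d) by induction on the length of a $\Psi$-filtration. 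This gives (I)(a) and (I)(c) with $\mathcal{B}=\F(\Psi)$, and for (I)(b) it remains to note that $Q(i)\in\F(\Psi)$ by Definition \ref{def de sist. estricto}(c) while $Q\subseteq{}^{\perp_1}\F(\Psi)$ by Definition \ref{def de sist. estricto}(d) and extension-closedness. Then $\mathrm{(I)}\Rightarrow\mathrm{(III)}$ is immediate: (III)(a) and the $\Tor$-vanishing of (III)(b) are part of (I)(a), and $\Ext^{1}_{\Lambda}({}_{\Lambda}Q,G({}_{\Gamma}\overline{\Delta}))=0$ holds because each $G({}_{\Gamma}\overline{\Delta}(i))$ lies in $\mathcal{B}$ by (I)(c) and $Q\subseteq{}^{\perp_1}\mathcal{B}$ by (I)(b).

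The substantial direction is $\mathrm{(III)}\Rightarrow\mathrm{(II)}$. Under (III), the hypotheses of Lemma \ref{lema gral. (con el Tor) suc. exacta Z(i),Y(i), Phi(i)} (with $\underline{Y}=\Q$, so $\Phi=\Psi$) hold, so there are exact sequences $0\to Z(i)\to Q(i)\to\Psi(i)\to 0$ with $Z(i)\in\F(\{\Psi(j):j\leq i\})$ — this is Definition \ref{def de sist. estricto}(c) — and moreover $\F(\Psi)\subseteq C^{\wedge}_2(Q)$, with every $M\in\F(\Psi)$ fitting into an exact sequence $0\to M'\to Y'\to M\to 0$ with $Y'\in\add(Q)$ and $M'\in\F(\Psi)$. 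In particular $\add(Q)\subseteq\F(\Psi)$, hence $Q\subseteq{}^{\perp_1}\F(\Psi)$ using extension-closedness and $Q\in{}^{\perp_1}\Psi$ from (III)(b); this is Definition \ref{def de sist. estricto}(d). Since $Q\in{}^{\perp_1}\F(\Psi)$ and $\F(\Psi)\subseteq C^{\wedge}_2(Q)$, the non-dual form of Proposition \ref{Prop. equivalencia C2Check}, i.e. \cite[Theorem 2.10]{MPV}, shows that $F|_{\F(\Psi)}\colon\F(\Psi)\to\F(F(\Psi))$ is an exact equivalence with quasi-inverse $G$.

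It remains to verify Definition \ref{def de sist. estricto}(a),(b), and this is the technical heart, where I expect the main difficulty: one must show $F(\Psi(i))=F_QG_Q({}_{\Gamma}\overline{\Delta}(i))\cong{}_{\Gamma}\overline{\Delta}(i)$. I would argue as follows. Since $\Gamma^{op}$ is standardly stratified there is, for each $i$, an exact sequence $0\to U(i)\to{}_{\Gamma}P(i)\to{}_{\Gamma}\overline{\Delta}(i)\to 0$ with ${}_{\Gamma}P(i)=F(Q(i))$ and $U(i)\in\F(\{{}_{\Gamma}\overline{\Delta}(j):j\leq i\})$ (the sequence underlying Lemma \ref{lema gral. (con el Tor) suc. exacta Z(i),Y(i), Phi(i)}); applying $G$ keeps it exact because $\Tor^{\Gamma}_1(Q,{}_{\Gamma}\overline{\Delta})=0$, and since all modules involved then lie in $\F(\Psi)$, on which $\Ext^{1}_{\Lambda}(Q,-)$ vanishes, applying $F$ keeps the resulting sequence exact. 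Recalling that $F_Q,G_Q$ restrict to inverse equivalences between $\add(Q)$ and $\proj\,(\Gamma)$, naturality of the adjunction unit $\eta$ then forces the induced map $\eta_{{}_{\Gamma}\overline{\Delta}(i)}\colon{}_{\Gamma}\overline{\Delta}(i)\to F(\Psi(i))$ to be surjective; comparing classes in $K_0(\Gamma)$ along the two exact sequences and inducting on $i$ along $\leq$ gives $[F(\Psi(i))]=[{}_{\Gamma}\overline{\Delta}(i)]$, whence $\eta_{{}_{\Gamma}\overline{\Delta}(i)}$ is an isomorphism. Granting $F(\Psi(i))\cong{}_{\Gamma}\overline{\Delta}(i)$, the adjunction isomorphism $\Hom_{\Lambda}(GX,GY)\cong\Hom_{\Gamma}(X,FGY)$ gives $\End_{\Lambda}(\Psi(i))\cong\End_{\Gamma}({}_{\Gamma}\overline{\Delta}(i))$, a division ring because ${}_{\Gamma}\overline{\Delta}(i)$ is proper standard, and $\Hom_{\Lambda}(\Psi(i),\Psi(j))\cong\Hom_{\Gamma}({}_{\Gamma}\overline{\Delta}(i),{}_{\Gamma}\overline{\Delta}(j))=0$ for $i<j$ by the usual $\Hom$-vanishing between proper standard modules relative to $\leq^{op}$ — that is, Definition \ref{def de sist. estricto}(a),(b). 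Finally, for uniqueness: any $\mathcal{B}$ as in (I)(c) equals $G(\F({}_{\Gamma}\overline{\Delta}))=\F(\Psi)$, and by \cite[Theorem 4.3]{MPV} the modules of a proper costratifying system on $\Q$ satisfy $\Psi(i)\cong G(F(\Psi(i)))\cong G({}_{\Gamma}\overline{\Delta}(i))$, so $\mathcal{B}$ and $\Psi$ are determined up to isomorphism by $\Q$.
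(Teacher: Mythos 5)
Your proof is correct, but it follows a somewhat different path than the paper and fills in a step the paper delegates to a cited result.

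The paper proves the cycle $\mathrm{(I)}\Rightarrow\mathrm{(II)}\Rightarrow\mathrm{(III)}\Rightarrow\mathrm{(I)}$, whereas you prove $\mathrm{(II)}\Rightarrow\mathrm{(I)}\Rightarrow\mathrm{(III)}\Rightarrow\mathrm{(II)}$; this rearrangement concentrates the work in $\mathrm{(III)}\Rightarrow\mathrm{(II)}$, whereas the paper places it in $\mathrm{(III)}\Rightarrow\mathrm{(I)}$ and then harvests it cheaply in $\mathrm{(I)}\Rightarrow\mathrm{(II)}$ using the full faithfulness of $G|_{\F({}_{\Gamma}\overline{\Delta})}$ supplied by (I)(c). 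The genuine difference is your handling of the identification $F(\Psi(i))=FG({}_{\Gamma}\overline{\Delta}(i))\cong{}_{\Gamma}\overline{\Delta}(i)$, which both arguments ultimately rest on: the paper simply writes ``the arguments in the proof of $(\Leftarrow)$ in Theorem \ref{prop. adicional. equival. para sist. estrat. extinyectivos,MMS} can be adapted, using Lemma \ref{lema gral. (con el Tor) suc. exacta Z(i),Y(i), Phi(i)} and [Theorem 2.10, MPV]'', deferring to the cited theorem in \cite{MPV} to produce the equivalence onto $\F({}_{\Gamma}\overline{\Delta})$, while you give a self-contained argument via the adjunction unit and lengths. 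Your argument does work, but the phrase ``inducting on $i$ along $\leq$'' is misleading and unnecessary: surjectivity of $\eta_{{}_{\Gamma}\overline{\Delta}(i)}$ holds uniformly in $i$ (by naturality against $\eta_{{}_{\Gamma}P(i)}$, which is an isomorphism), and then from the two exact sequences $0\to V(i)\to{}_{\Gamma}P(i)\to{}_{\Gamma}\overline{\Delta}(i)\to0$ and its image under $FG$ one gets $([V(i)]-[FGV(i)])+([{}_{\Gamma}\overline{\Delta}(i)]-[FG{}_{\Gamma}\overline{\Delta}(i)])=0$ in $K_0(\Gamma)$; since both summands are effective (each $\eta$ is epi), both vanish, with no induction on the index $i$. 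Trying to induct on $i$ is in fact awkward here because the kernel $V(i)$ can involve $\overline{\Delta}(j)$ with $j$ on both sides of $i$. Finally, a small citation point: the Tor-vanishing in your $\mathrm{(II)}\Rightarrow\mathrm{(I)}$ step is extracted in the paper from \cite[Proposition 2.5]{MPV} (via $\F(\Psi)\subseteq C^{\wedge}_2(Q)$), not from \cite[Theorem 4.3]{MPV}; you should cite accordingly. You also explicitly verify the uniqueness clause, which the paper states but does not spell out in the proof.
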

\begin{proof}
\indent (I)$\Rightarrow$(II). Assume that (I) holds. Let $\Psi(i):= G(\leftidx{_\Gamma}{\overline{\Delta}}(i))$ for all $i\in[1,t]$. To prove that $(\Psi,\Q,\leq)$ is a proper costratifying system, we have to show that the conditions of \cite[Definition 3.1]{MPV} are satisfied.
\begin{itemize}
 \item[(a)] Since $\End_\Lambda(\Psi(i))\simeq\End_\Gamma(\leftidx{_\Gamma}{\overline{\Delta}}(i)),$ it follows that $\End_\Lambda(\Psi(i))$ is a division ring for all $i\in[1,t]$.
 \item[(b)] Since $\Hom_\Lambda(\Psi(i),\Psi(j))\simeq\Hom_\Gamma(\leftidx{_\Gamma}{\overline{\Delta}}(i),\leftidx{_\Gamma}{\overline{\Delta}}(j)),$ we get that\\ $\Hom_\Lambda(\Psi(i),\Psi(j))=0$ for $j<^{op}i.$
 \item[(c)] We have to prove that, for each $i\in[1,t],$ there is an exact sequence
$$ 0\rightarrow Z(i)\rightarrow Q(i)\rightarrow \Psi(i)\rightarrow0,$$
with $Z(i)\in \F(\{\Psi(j):j\leq i\})$. In fact, this follows by Lemma \ref{lema gral. (con el Tor) suc. exacta Z(i),Y(i), Phi(i)} (a) applied to $\Phi:=\Psi$ and $\underline{Y}:=\Q.$
 \item[(d)] $\Ext^{1}_\Lambda(Q,\Psi)=0$ since $\Psi=G(\leftidx{_\Gamma}{\overline{\Delta}})\subseteq\mathcal{B}$ and $Q \subseteq \mathcal{B}\cap\leftidx{^{\bot_1}}{\mathcal{B}}$.
\end{itemize}

\indent (II)$\Rightarrow$(III) Let $(\Psi,\Q,\leq)$ be a proper costratifying system. Then, by
\cite[Theorem 4.3]{MPV}, we get that $(\Gamma^{op},\leq^{op})$ is a standardly stratified algebra and  $\Psi(i)\simeq G(\leftidx{_\Gamma}{\overline{\Delta}}(i)).$  Thus, $\Ext^{1}_\Lambda(\leftidx{_\Lambda}{Q},G(\leftidx{_\Gamma}{\overline{\Delta}}))=0$, by definition of proper costratifying system. Finally, it follows from \cite[Proposition 2.5]{MPV} that\\ $F(\F(\Psi))\subseteq F(C^{\wedge}_2(Q))\subseteq\text{Ker Tor}^{\Gamma}_1(Q,-)$.\\
\indent (III)$\Rightarrow$(I) Let $\mathcal{B}:=\F(G(\leftidx{_\Gamma}{\overline{\Delta}}))$. The arguments in the proof of ``($\Leftarrow$)'' in Theorem \ref{prop. adicional. equival. para sist. estrat. extinyectivos,MMS}  can be adapted to this case, by using Lemma \ref{lema gral. (con el Tor) suc. exacta Z(i),Y(i), Phi(i)} and \cite[Theorem 2.10]{MPV}.
\end{proof}
\

We are now in a position to show the main result of this section, which is an analogue of Theorem \ref{teo. dado sistema estrat.propio, existe sist. estart. extinyectivo.} and we state in the next theorem.

\begin{theorem}
\label{teo. dado sistema estrat.extinyectivo, existe sist. estart. propio.}
Let $(\Theta,\Q,\leq)$ be an Ext-injective stratifying system of size $t$ in $\modu\,(\Lambda)$. Then, the following conditions are equivalent.
\begin{itemize}
\item[(a)] There exists a family $\Psi=\{\Psi(i)\}^{t}_{i=1}$ in $\modu\,(\Lambda)$ such that $(\Psi,\Q,\leq)$ is a proper costratifying system.
\item[(b)] $\Tor^{\Gamma}_{1}(Q,\leftidx{_\Gamma}{\overline{\Delta}})=0=\Ext^{1}_{\Lambda}(Q,G(\leftidx{_\Gamma}{\overline{\Delta}}))$.
\end{itemize}
If these conditions hold, the system $(\Psi,\Q,\leq)$ is uniquely determined (up to isomorphism) and $\Psi(i)\simeq G(\leftidx{_\Gamma}{\overline{\Delta}}(i))$ for all $i\in [1,t]$.
\end{theorem}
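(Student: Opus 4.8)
The plan is to deduce this theorem directly from Theorem \ref{teo. equival. para sist. estrat. propios}, exactly as Theorem \ref{teo. dado sistema estrat.propio, existe sist. estart. extinyectivo.} was deduced from Theorem \ref{prop. adicional. equival. para sist. estrat. extinyectivos,MMS}. The point is that the hypothesis that $(\Theta,\Q,\leq)$ is an Ext-injective stratifying system already supplies part (a) of condition (III) in Theorem \ref{teo. equival. para sist. estrat. propios}, namely that $(\Gamma^{op},\leq^{op})$ is a standardly stratified algebra. Indeed, by \cite[Theorem 2.3]{MMS1} (our Theorem \ref{teo. equival. para sist. estrat. extinyectivos,MMS}), the existence of an Ext-injective stratifying system $(\Theta,\Q,\leq)$ forces $(\Gamma^{op},\leq^{op})$ to be standardly stratified, where here $\Gamma=\End(\leftidx{_\Lambda}{Q})^{op}$, because $\underline{Y}$ there plays the role of $\Q$ here. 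So the only remaining conditions in (III) that are not automatic are the two vanishing conditions $\Tor^{\Gamma}_{1}(Q,\leftidx{_\Gamma}{\overline{\Delta}})=0=\Ext^{1}_{\Lambda}(Q,G(\leftidx{_\Gamma}{\overline{\Delta}}))$, which is precisely condition (b) of the present theorem.

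First I would observe that, since $(\Theta,\Q,\leq)$ is an Ext-injective stratifying system, Theorem \ref{teo. equival. para sist. estrat. extinyectivos,MMS} applied with $\underline{Y}=\Q$ gives that $(\Gamma^{op},\leq^{op})$ is a standardly stratified algebra. Then, for the equivalence (a)$\Leftrightarrow$(b): if (a) holds, i.e. there is a family $\Psi$ making $(\Psi,\Q,\leq)$ a proper costratifying system, then the implication (II)$\Rightarrow$(III) of Theorem \ref{teo. equival. para sist. estrat. propios} yields exactly the two vanishing statements in (b), together with the standardly stratified condition (which we already know). Conversely, if (b) holds, then combining it with the already-established fact that $(\Gamma^{op},\leq^{op})$ is standardly stratified gives condition (III) of Theorem \ref{teo. equival. para sist. estrat. propios} in full; hence (II) of that theorem holds, which is precisely statement (a) here.

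For the uniqueness clause, I would again invoke the final sentence of Theorem \ref{teo. equival. para sist. estrat. propios}: when the equivalent conditions hold, $\mathcal{B}\simeq\F(\Psi)$ is uniquely determined by $\Q$ and $\Psi(i)\simeq G(\leftidx{_\Gamma}{\overline{\Delta}}(i))$ for all $i\in[1,t]$. Since the order $\leq$ is fixed and $\leftidx{_\Gamma}{\overline{\Delta}}$ is computed from $({}_{\Gamma}P,\leq^{op})$ with ${}_{\Gamma}P(i)=F(Q(i))$, the modules $\Psi(i)$ are determined up to isomorphism, and hence so is the system $(\Psi,\Q,\leq)$.

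I do not anticipate a genuine obstacle here: the theorem is a straightforward specialization of Theorem \ref{teo. equival. para sist. estrat. propios} obtained by feeding in the extra hypothesis that $\Q$ underlies an Ext-injective stratifying system, which trades condition (III)(a) for free. The only point requiring a small amount of care is checking that the algebra $\Gamma=\End(\leftidx{_\Lambda}{Q})^{op}$ and the functors $F,G$ used in the statement of Theorem \ref{teo. equival. para sist. estrat. propios} coincide with those attached to $\Q$ in the role of $\underline{Y}$ in Theorem \ref{teo. equival. para sist. estrat. extinyectivos,MMS}, so that the standardly stratified conclusion from the latter is literally the condition (III)(a) of the former; this is guaranteed by the conventions fixed at the start of Section 5 and by Remark \ref{CoherenciaProy}.

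\begin{proof}
Since $(\Theta,\Q,\leq)$ is an Ext-injective stratifying system, Theorem \ref{teo. equival. para sist. estrat. extinyectivos,MMS}, applied with $\underline{Y}=\Q$, ensures that $(\Gamma^{op},\leq^{op})$ is a standardly stratified algebra. Thus, by Theorem \ref{teo. equival. para sist. estrat. propios}, condition (a) of the present theorem (which is condition (II) of that theorem) is equivalent to condition (III) of that theorem, and since (III)(a) already holds, it is equivalent to (III)(b), that is, to condition (b) here. The uniqueness assertion and the formula $\Psi(i)\simeq G(\leftidx{_\Gamma}{\overline{\Delta}}(i))$ follow from the last statement of Theorem \ref{teo. equival. para sist. estrat. propios}.
\end{proof}
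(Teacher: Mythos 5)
Your proof is correct and takes essentially the same route as the paper: the paper's own proof says ``The proof follows immediately from Theorem \ref{teo. equival. para sist. estrat. propios} and Remark \ref{CoherenciaProy}, since by \cite{ES} we get that $(\Gamma^{op},\leq^{op})$ is a standardly stratified algebra.'' The only cosmetic difference is that you source the standardly-stratified fact via Theorem \ref{teo. equival. para sist. estrat. extinyectivos,MMS} (the \cite{MMS1} equivalence) rather than citing \cite{ES} directly, and you relegate the appeal to Remark \ref{CoherenciaProy} to the surrounding discussion rather than the formal proof; both choices are harmless.
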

\begin{proof}
%Let us first recall that $(\Gamma^{op},\leq^{op})$ is a standardly stratified algebra with the $\leftidx{_{\Gamma^{op}}}{\Delta}(i)$'s  calculated with the order $\leq^{op}$ on $[1,t])$ using the family of projective $\Gamma^{op}$-modules $\{\leftidx{_{\Gamma^{op}}}{\overline{P}}(i)\}^{t}_{i=1}$ which, by Lemma \ref{lema los proyectivos coinciden}, coincides with the family $\{\leftidx{_{\Gamma^{op}}}{P}(i)\}^{t}_{i=1}$.\\
%\indent (a) $\Rightarrow$ (b) Suppose that $\Tor^{\Gamma}_{1}(Q,\leftidx{_\Gamma}{\overline{\Delta}})=0=\Ext^{1}_{\Lambda}(Q,G(\leftidx{_\Gamma}{\overline{\Delta}}))$. Let $\Psi=G(\leftidx{_\Gamma}{\overline{\Delta}})$. From Lemma \ref{lema TorExt suc.exac.M'Q'M} we have $\F(\Psi)\subseteq C^{Q}_2$. Then, from Theorem 2.10 in [MPV] applied to $\mathcal{C}=\Psi$, we conclude that the conditions in Teorema \ref{teo. equival. para sist. estrat. propios} (I) hold with $\mathcal{B}=\F(\Psi)$, and (b) follows. \\
%\indent (b) $\Rightarrow$ (a) Let $(\Psi,\Q,\leq)$ be a proper stratifying system.
The proof follows immediately from Theorem \ref{teo. equival. para sist. estrat. propios} and
Remark \ref{CoherenciaProy}, since by \cite{ES} we get that $(\Gamma^{op},\leq^{op})$ is a standardly stratified algebra.
\end{proof}

\
The following results show that, for a given family $\Q$ of
$\Lambda$-modules over an algebra $\Lambda$ not necessarily standardly stratified, the simultaneous existence of $\Theta$ and $\Psi$ such that $(\Theta,\Q,\leq)$ is an Ext-injective stratifying system and $(\Psi,\Q,\leq)$ is a proper costratifying system, implies that $\Gamma^{op}=\End(\leftidx{_\Lambda}{Q})$ is a standardly stratified algebra and $\leftidx{_{\Gamma^{op}}}{Q}$ coincides with the characteristic tilting module associated to $\Gamma^{op}$.

\begin{corollary}
\label{corollary dado sist propio, existe Ext-iny. con condicion sobre tilting}
Let $(\Psi,\Q,\leq)$ be a proper costratifying system of size $t$ in $\modu\,(\Lambda)$. Then, the following conditions are equivalent, where ${}_{\Gamma^{op}}T$ is the characteristic tilting module associated to the standardly stratified algebra $(\Gamma^{op},\leq^{op})$.
\begin{enumerate}
\item[(a)] There exists a family $\Theta=\{\Theta(i)\}^{t}_{i=1}$ in $\modu(\Lambda)$ such that $(\Theta,\Q,\leq)$ is an Ext-injective stratifying system.
\item[(b)] $\leftidx{_{\Gamma^{op}}}{Q}\simeq {}_{\Gamma^{op}}T$ and $\Ext^{1}_{\Lambda}(\overline{G}(\leftidx{_{\Gamma^{op}}}{\Delta}),\leftidx{_\Lambda}{Q})=0$.
\end{enumerate}
\end{corollary}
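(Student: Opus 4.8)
The plan is to deduce this corollary from Theorem~\ref{teo. dado sistema estrat.propio, existe sist. estart. extinyectivo.} by identifying, under the standing hypothesis that $(\Psi,\Q,\leq)$ is a proper costratifying system, the condition $\Ext^{1}_{\Gamma^{op}}(\leftidx{_{\Gamma^{op}}}{\Delta},\leftidx{_{\Gamma^{op}}}{Q})=0$ with the condition $\leftidx{_{\Gamma^{op}}}{Q}\simeq {}_{\Gamma^{op}}T$. Recall from \cite[Theorem 4.3]{MPV} that, for a proper costratifying system $(\Psi,\Q,\leq)$, the pair $(\Gamma^{op},\leq^{op})$ is a standardly stratified algebra, so the characteristic tilting module ${}_{\Gamma^{op}}T$ is available. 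Theorem~\ref{teo. dado sistema estrat.propio, existe sist. estart. extinyectivo.} already tells us that (a) is equivalent to
$$\Ext^{1}_{\Gamma^{op}}(\leftidx{_{\Gamma^{op}}}{\Delta},\leftidx{_{\Gamma^{op}}}{Q})=0=\Ext^{1}_{\Lambda}(\overline{G}(\leftidx{_{\Gamma^{op}}}{\Delta}),\leftidx{_\Lambda}{Q}).$$
Since the second equality is common to (a)-equivalent and to (b), it suffices to prove that, for the standardly stratified algebra $(\Gamma^{op},\leq^{op})$, a module $N\in\add(\leftidx{_{\Gamma^{op}}}{Q})$ with the same number of non-isomorphic indecomposable summands as ${}_{\Gamma^{op}}T$ (which is the case here, as $\Q$ has $t$ pairwise non-isomorphic indecomposable members and $K_0(\Gamma^{op})$ has rank $t$) satisfies $\Ext^{1}_{\Gamma^{op}}(\leftidx{_{\Gamma^{op}}}{\Delta},N)=0$ if and only if $N\simeq {}_{\Gamma^{op}}T$.

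First I would invoke the characterization of the characteristic tilting module from \cite{AHLU} and \cite{Platzeck Reiten}: $\add({}_{\Gamma^{op}}T)=\F(\leftidx{_{\Gamma^{op}}}{\Delta})\cap\F(\leftidx{_{\Gamma^{op}}}{\Delta})^{\bot_1}$, and every indecomposable summand of ${}_{\Gamma^{op}}T$ lies in $\F(\leftidx{_{\Gamma^{op}}}{\Delta})$. For the direction ``$\Ext^1=0\Rightarrow \leftidx{_{\Gamma^{op}}}{Q}\simeq T$'': from \cite[Theorem 4.3]{MPV} the functor $F=F_Q$ restricts to an equivalence $\F(\Psi)\to\F(\leftidx{_\Gamma}{\overline{\Delta}})$, hence by duality (via $*$ and Lemma~\ref{lema gral.igualdad de proyectivos}, or directly) each $\leftidx{_{\Gamma^{op}}}{Q}(i)=\overline{F}(Q(i))$ lies in $\F(\leftidx{_{\Gamma^{op}}}{\overline{\Delta}})$; but more is true — in fact $\leftidx{_{\Gamma^{op}}}{Q}(i)$ is projective-injective-like in the relevant sense. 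The cleanest route is: a proper costratifying system forces $\leftidx{_{\Gamma^{op}}}{Q}(i)=\overline{F}(Q(i))={}_{\Gamma^{op}}\overline{P}(i)$, the indecomposable projective $\Gamma^{op}$-modules (by Remark~\ref{CoherenciaProy} together with the identification ${}_{\Gamma^{op}}\overline{P}(i)=\overline{F}(Q(i))$), so $\leftidx{_{\Gamma^{op}}}{Q}\simeq\leftidx{_{\Gamma^{op}}}{\Gamma^{op}}$ is always projective; then $\Ext^{1}_{\Gamma^{op}}(\leftidx{_{\Gamma^{op}}}{\Delta},\leftidx{_{\Gamma^{op}}}{Q})=0$ combined with $\leftidx{_{\Gamma^{op}}}{Q}\in\F(\leftidx{_{\Gamma^{op}}}{\Delta})$ (automatic for a standardly stratified algebra, as all projectives are $\Delta$-filtered) puts $\leftidx{_{\Gamma^{op}}}{Q}$ in $\add({}_{\Gamma^{op}}T)$, and a count of indecomposable summands forces $\leftidx{_{\Gamma^{op}}}{Q}\simeq{}_{\Gamma^{op}}T$. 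Conversely, if $\leftidx{_{\Gamma^{op}}}{Q}\simeq{}_{\Gamma^{op}}T$, then $\leftidx{_{\Gamma^{op}}}{Q}\in\F(\leftidx{_{\Gamma^{op}}}{\Delta})^{\bot_1}$, which gives in particular $\Ext^{1}_{\Gamma^{op}}(\leftidx{_{\Gamma^{op}}}{\Delta},\leftidx{_{\Gamma^{op}}}{Q})=0$.

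Assembling the pieces: (a) $\Leftrightarrow$ $\Ext^{1}_{\Gamma^{op}}(\leftidx{_{\Gamma^{op}}}{\Delta},\leftidx{_{\Gamma^{op}}}{Q})=0$ and $\Ext^{1}_{\Lambda}(\overline{G}(\leftidx{_{\Gamma^{op}}}{\Delta}),\leftidx{_\Lambda}{Q})=0$ by Theorem~\ref{teo. dado sistema estrat.propio, existe sist. estart. extinyectivo.}; and $\Ext^{1}_{\Gamma^{op}}(\leftidx{_{\Gamma^{op}}}{\Delta},\leftidx{_{\Gamma^{op}}}{Q})=0 \Leftrightarrow \leftidx{_{\Gamma^{op}}}{Q}\simeq{}_{\Gamma^{op}}T$ by the argument above. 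Substituting yields the equivalence (a) $\Leftrightarrow$ (b). I expect the main obstacle to be the careful bookkeeping of which projective (or proper-costandard) $\Gamma^{op}$-module each $\leftidx{_{\Gamma^{op}}}{Q}(i)$ actually is: one must be precise that, under the proper costratifying system hypothesis, $\overline{F}(Q(i))={}_{\Gamma^{op}}\overline{P}(i)$ and that this is genuinely projective over $\Gamma^{op}$ — this is what makes ``$\Delta$-filtered'' automatic and reduces the equivalence to the tilting-module characterization — as well as verifying that the summand count indeed matches the rank $t$ of $K_0(\Gamma^{op})$, so that containment in $\add({}_{\Gamma^{op}}T)$ upgrades to isomorphism.
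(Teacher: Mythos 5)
Your overall reduction is correct in outline: combine Theorem~\ref{teo. dado sistema estrat.propio, existe sist. estart. extinyectivo.} with the claim that, under the standing hypotheses, $\Ext^{1}_{\Gamma^{op}}(\leftidx{_{\Gamma^{op}}}{\Delta},\leftidx{_{\Gamma^{op}}}{Q})=0$ is equivalent to $\leftidx{_{\Gamma^{op}}}{Q}\simeq {}_{\Gamma^{op}}T$. The direction ``$\leftidx{_{\Gamma^{op}}}{Q}\simeq T\Rightarrow\Ext^1_{\Gamma^{op}}(\Delta,Q)=0$'' is fine, and the counting argument to upgrade $\add({}_{\Gamma^{op}}Q)=\add(T)$ to an isomorphism is also what is needed. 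However, the other direction of your key step rests on a genuinely false claim.

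You assert that $\leftidx{_{\Gamma^{op}}}{Q}(i)=\overline{F}(Q(i))={}_{\Gamma^{op}}\overline{P}(i)$, hence that $\leftidx{_{\Gamma^{op}}}{Q}$ is the regular projective module over $\Gamma^{op}$, so that $\leftidx{_{\Gamma^{op}}}{Q}\in\F(\leftidx{_{\Gamma^{op}}}{\Delta})$ holds automatically. This conflates two distinct objects. The module $\leftidx{_{\Gamma^{op}}}{Q}$ is $Q$ itself, viewed as a left $\Gamma^{op}$-module through the $(\Lambda,\Gamma)$-bimodule structure, whereas $\overline{F}(Q)=\Hom_\Lambda(Q,Q)=\Gamma^{op}$ is the regular module. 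Remark~\ref{CoherenciaProy} identifies ${}_{\Gamma^{op}}\overline{P}(i)$ with $\overline{F}(Q(i))$, not with $\leftidx{_{\Gamma^{op}}}{Q}(i)$. In general $\leftidx{_{\Gamma^{op}}}{Q}$ is not projective: the second example in Section 5 computes $\leftidx{_{\Gamma^{op}}}{Q}$ for a specific proper costratifying system and obtains a module which is not isomorphic to $T$ (nor, visibly, projective), precisely to illustrate that the condition $\leftidx{_{\Gamma^{op}}}{Q}\simeq T$ can fail. Your argument would make that condition automatic, which would trivialize the corollary and contradict the example.

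The fact you need --- $\leftidx{_{\Gamma^{op}}}{Q}\in\F(\leftidx{_{\Gamma^{op}}}{\Delta})$ --- is genuinely nontrivial and requires the hypothesis that (a) holds, not merely that $(\Psi,\Q,\leq)$ is a proper costratifying system. The paper extracts it from Theorem~\ref{teo. dado sistema estrat.extinyectivo, existe sist. estart. propio.}\,(b), which gives $\Tor^{\Gamma}_{1}(Q,\leftidx{_\Gamma}{\overline{\Delta}})=0$; applying the Cartan--Eilenberg duality $\Ext^{1}_{\Gamma^{op}}(\leftidx{_{\Gamma^{op}}}{Q},\leftidx{_{\Gamma^{op}}}{\overline{\nabla}})\simeq D(\Tor^{\Gamma}_{1}(Q,\leftidx{_\Gamma}{\overline{\Delta}}))$ and the AHLU identity ${}^{\bot_1}\F(\leftidx{_{\Gamma^{op}}}{\overline{\nabla}})=\F(\leftidx{_{\Gamma^{op}}}{\Delta})$ then places $\leftidx{_{\Gamma^{op}}}{Q}$ in $\F(\leftidx{_{\Gamma^{op}}}{\Delta})$. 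Without this $\Tor$-vanishing input your proof of (a)\,$\Rightarrow$\,(b) does not go through.
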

\begin{proof}
\indent (a) $\Rightarrow$ (b) Suppose that $(\Theta,\Q,\leq)$ is an Ext-injective stratifying system. Since $(\Psi,\Q,\leq)$ is a proper costratifying system, it follows from Theorem \ref{teo. dado sistema estrat.propio, existe sist. estart. extinyectivo.} that
$\Ext^{1}_{\Lambda}(\overline{G}({}_{\Gamma^{op}}\Delta),{}_\Lambda Q)=0$. To prove that $\leftidx{_{\Gamma^{op}}}{Q}\simeq {}_{\Gamma^{op}}T$, it is enough to show that ${}_{\Gamma^{op}}Q\in \F({}_{\Gamma^{op}}\Delta)\cap\F({}_{\Gamma^{op}}\Delta)^{\bot_1}=\add({}_{\Gamma^{op}}T)$ (see \cite[Proposition 2.2]{AHLU}). From Theorem \ref{teo. dado sistema estrat.extinyectivo, existe sist. estart. propio.} (b) we have that $\Tor^{\Gamma}_{1}(Q,\leftidx{_\Gamma}{\overline{\Delta}})=0$. Then, by \cite[page 120]{CE}, $\Ext^{1}_{\Gamma^{op}}(\leftidx{_{\Gamma^{op}}}{Q},\leftidx{_{\Gamma^{op}}}{\overline{\nabla}})\simeq D(\Tor^{\Gamma}_{1}(Q,\leftidx{_\Gamma}{\overline{\Delta}}))=0$, that is, $Q\in{}^{\bot_1}\F(\leftidx{_{\Gamma^{op}}}{\overline{\nabla}})=\F(\leftidx{_{\Gamma^{op}}}{\Delta})$ (see \cite[Theorem 1.6]{AHLU}). On the other hand, we get from Theorem \ref{teo. dado sistema estrat.propio, existe sist. estart. extinyectivo.} (b) that $\Ext^{1}_{\Gamma^{op}}(\leftidx{_{\Gamma^{op}}}{\Delta},\leftidx{_{\Gamma^{op}}}{Q})=0$, that is, $Q\in\F(\leftidx{_{\Gamma^{op}}}{\Delta})^{\bot_1}$. Thus (b) holds.\\
\indent (b) $\Rightarrow$ (a) Since ${}_{\Gamma^{op}}T$ is the characteristic tilting module, then we get\\ $\Ext^{1}_{\Gamma^{op}}(\leftidx{_{\Gamma^{op}}}{\Delta},\leftidx{_{\Gamma^{op}}}{Q})\simeq\Ext^{1}_{\Gamma^{op}}(\leftidx{_{\Gamma^{op}}}{\Delta},\leftidx{_{\Gamma^{op}}}{T})=0$. Therefore, (a) follows from Theorem \ref{teo. dado sistema estrat.propio, existe sist. estart. extinyectivo.}.
\end{proof}

\
We illustrate this result with the following example.
\begin{example}
\rm Let $\Lambda$ be given by the quiver
$$\xymatrix{
 {\circ}& \ar[l]^\alpha
        {\circ}  \ar@(ur,ul)[]_{\beta}
              & \ar[l]^\gamma{\circ} }$$\vspace{-0.7cm}
$$\xymatrix{1&2&3}$$
 with the relations $\beta^{2}=0$, $\beta\alpha=0$ and $\gamma\beta=0$.
Consider the natural order $1\leq2\leq 3$, and the sets
$\Psi=\{\Psi(1)=2,\;\Psi(2)=\small\begin{array}{c}3 \\2
\\1\end{array},\;\Psi(3)={2 \atop 1}\}$ and
$\Q=\{Q(1)=\small\begin{array}{c}2\\2\end{array},\; Q(2)=\small\begin{array}{c}3
\\2 \\1\end{array},\; Q(3)=\small\begin{array}{ccc} & 2 &  \\1 &  & 2\end{array}\}.$ Then
$(\Psi,\Q,\leq)$ is a proper costratifying system of size $3$ in
$\modu(\Lambda)$. In this case,
 $\Gamma^{op}=\End({}_\Lambda Q)$ is given by the quiver
$$\underset{1}{\circ}\overset{\mu}{\underset{\delta}{\rightleftarrows}}\underset{3}{\circ}\overset{\varepsilon}{\longrightarrow}
\underset{2}{\circ}$$
with the relations $\varepsilon\mu=0$ and $\mu\delta\mu=0$. By \cite[Theorem 4.3]{MPV}, we know
that $(\Gamma^{op},\leq^{op})$ is a standardly stratified algebra. The characteristic tilting module is \;
 $\small\leftidx{_{\Gamma^{op}}}{T}=\small\begin{array}{c} 3 \\1 \\3 \\1\end{array}\oplus\small\begin{array}{c}3 \\2\end{array}\oplus\; 3$, which
is not isomorphic to \;$\small\leftidx{_{\Gamma^{op}}}{Q}=\small\begin{array}{c}2 \\ 1 \end{array}\oplus\small\begin{tabular}{ccc}
                                                                                                  & 3 &   \\
                                                                                                1 &   & 2 \\
                                                                                                3 &   &   \\
                                                                                                1 &   &
                                                                                              \end{tabular}\oplus 2$.
Hence it follows from Corollary \ref{corollary dado sist propio, existe Ext-iny. con condicion sobre tilting} (b) that there exists no family $\Theta=\{\Theta(i)\}^{t}_{i=1}$ in $\modu(\Lambda)$ such that $(\Theta,\Q,\leq)$ is an Ext-injective stratifying system.
\end{example}

\

The following result is proven by using similar arguments to those used in the proof of Corollary \ref{corollary dado sist propio, existe Ext-iny. con condicion sobre tilting}.
\begin{corollary}
Let $(\Theta,\Q,\leq)$ be an Ext-injective stratifying system of size $t$ in $\modu\,(\Lambda)$. Then, the following conditions are equivalent, where ${}_{\Gamma^{op}}T$ is the characteristic tilting module associated to the standardly stratified algebra $(\Gamma^{op},\leq^{op})$.
\begin{enumerate}
\item[(a)] There exists a family $\Psi=\{\Psi(i)\}^{t}_{i=1}$ in $\modu\,(\Lambda)$ such that $(\Psi,\Q,\leq)$ is a proper costratifying system.
\item[(b)] $\leftidx{_{\Gamma^{op}}}{Q}\simeq {}_{\Gamma^{op}}T$ and $\Ext^{1}_{\Lambda}(Q,G(\leftidx{_\Gamma}{\overline{\Delta}}))=0$.
\end{enumerate}
\end{corollary}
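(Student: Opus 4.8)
The plan is to transpose, almost verbatim, the proof of Corollary~\ref{corollary dado sist propio, existe Ext-iny. con condicion sobre tilting}, interchanging the roles of proper costratifying systems and Ext-injective stratifying systems and invoking Theorem~\ref{teo. dado sistema estrat.extinyectivo, existe sist. estart. propio.} in place of Theorem~\ref{teo. dado sistema estrat.propio, existe sist. estart. extinyectivo.}. First I would record the standing facts: since $(\Theta,\Q,\leq)$ is an Ext-injective stratifying system, \cite{ES} (equivalently, condition (a) in (II) of Theorem~\ref{teo. equival. para sist. estrat. extinyectivos,MMS}) gives that $(\Gamma^{op},\leq^{op})$ is standardly stratified, so the characteristic tilting module $\leftidx{_{\Gamma^{op}}}{T}$ exists with $\add(\leftidx{_{\Gamma^{op}}}{T})=\F(\leftidx{_{\Gamma^{op}}}{\Delta})\cap\F(\leftidx{_{\Gamma^{op}}}{\Delta})^{\bot_1}$ (\cite[Proposition 2.2]{AHLU}), and moreover that $\Ext^{1}_{\Gamma^{op}}(\leftidx{_{\Gamma^{op}}}{\Delta},\leftidx{_{\Gamma^{op}}}{Q})=0$, that is, $\leftidx{_{\Gamma^{op}}}{Q}\in\F(\leftidx{_{\Gamma^{op}}}{\Delta})^{\bot_1}$. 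I will also use the identity $\F(\leftidx{_{\Gamma^{op}}}{\overline{\nabla}})=\F(\leftidx{_{\Gamma^{op}}}{\Delta})^{\bot_1}$ (\cite[Theorem 1.6]{AHLU}) and the Cartan--Eilenberg isomorphism $\Ext^{1}_{\Gamma^{op}}(\leftidx{_{\Gamma^{op}}}{Q},\leftidx{_{\Gamma^{op}}}{\overline{\nabla}})\simeq D(\Tor^{\Gamma}_{1}(Q,\leftidx{_\Gamma}{\overline{\Delta}}))$ of \cite[page 120]{CE}.

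For $(a)\Rightarrow(b)$, assume a proper costratifying system $(\Psi,\Q,\leq)$ exists. Theorem~\ref{teo. dado sistema estrat.extinyectivo, existe sist. estart. propio.} then yields $\Tor^{\Gamma}_{1}(Q,\leftidx{_\Gamma}{\overline{\Delta}})=0=\Ext^{1}_{\Lambda}(Q,G(\leftidx{_\Gamma}{\overline{\Delta}}))$, which already gives the second assertion of (b). For $\leftidx{_{\Gamma^{op}}}{Q}\simeq\leftidx{_{\Gamma^{op}}}{T}$ it is, by the standing facts, enough to show $\leftidx{_{\Gamma^{op}}}{Q}\in\F(\leftidx{_{\Gamma^{op}}}{\Delta})$, since then $\leftidx{_{\Gamma^{op}}}{Q}\in\F(\leftidx{_{\Gamma^{op}}}{\Delta})\cap\F(\leftidx{_{\Gamma^{op}}}{\Delta})^{\bot_1}=\add(\leftidx{_{\Gamma^{op}}}{T})$, and this forces $\leftidx{_{\Gamma^{op}}}{Q}\simeq\leftidx{_{\Gamma^{op}}}{T}$ exactly as in Corollary~\ref{corollary dado sist propio, existe Ext-iny. con condicion sobre tilting}. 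The containment $\leftidx{_{\Gamma^{op}}}{Q}\in\F(\leftidx{_{\Gamma^{op}}}{\Delta})$ I would obtain from $\Tor^{\Gamma}_{1}(Q,\leftidx{_\Gamma}{\overline{\Delta}})=0$ via the Cartan--Eilenberg isomorphism above, which gives $\Ext^{1}_{\Gamma^{op}}(\leftidx{_{\Gamma^{op}}}{Q},\leftidx{_{\Gamma^{op}}}{\overline{\nabla}})=0$, hence $\leftidx{_{\Gamma^{op}}}{Q}\in{}^{\bot_1}\F(\leftidx{_{\Gamma^{op}}}{\overline{\nabla}})=\F(\leftidx{_{\Gamma^{op}}}{\Delta})$ by \cite[Theorem 1.6]{AHLU}.

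For $(b)\Rightarrow(a)$, I would mimic the short converse in Corollary~\ref{corollary dado sist propio, existe Ext-iny. con condicion sobre tilting}. Since $\leftidx{_{\Gamma^{op}}}{Q}\simeq\leftidx{_{\Gamma^{op}}}{T}\in\F(\leftidx{_{\Gamma^{op}}}{\Delta})$ and $\F(\leftidx{_{\Gamma^{op}}}{\overline{\nabla}})=\F(\leftidx{_{\Gamma^{op}}}{\Delta})^{\bot_1}$, we get $\Ext^{1}_{\Gamma^{op}}(\leftidx{_{\Gamma^{op}}}{Q},\leftidx{_{\Gamma^{op}}}{\overline{\nabla}})=0$, and therefore $\Tor^{\Gamma}_{1}(Q,\leftidx{_\Gamma}{\overline{\Delta}})=0$ by the Cartan--Eilenberg isomorphism (as $D$ is faithful). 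Together with the hypothesis $\Ext^{1}_{\Lambda}(Q,G(\leftidx{_\Gamma}{\overline{\Delta}}))=0$, these are exactly the two conditions in part (b) of Theorem~\ref{teo. dado sistema estrat.extinyectivo, existe sist. estart. propio.}, so that theorem supplies the required family $\Psi$.

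The adaptation is essentially mechanical, and the only places that deserve attention are: (i) the bookkeeping between $\modu(\Gamma)$ and $\modu(\Gamma^{op})$ and between $\leq$ and $\leq^{op}$ --- in particular remembering that $\leftidx{_\Gamma}{\overline{\Delta}}$ is computed over $(\Gamma,\leq^{op})$ whereas $\leftidx{_{\Gamma^{op}}}{\Delta}$, $\leftidx{_{\Gamma^{op}}}{\overline{\nabla}}$ and $\leftidx{_{\Gamma^{op}}}{T}$ are computed over $(\Gamma^{op},\leq^{op})$, with $\Tor$ over $\Gamma$ and $\Ext$ over $\Gamma^{op}$ tied together by the duality $D$ as in \cite[page 120]{CE} and Remark~\ref{CoherenciaProy}; and (ii) the passage $\leftidx{_{\Gamma^{op}}}{Q}\in\add(\leftidx{_{\Gamma^{op}}}{T})\Rightarrow\leftidx{_{\Gamma^{op}}}{Q}\simeq\leftidx{_{\Gamma^{op}}}{T}$, which is the one step leaning on the stratifying system structure rather than on bare homological algebra, since it uses that $\leftidx{_{\Gamma^{op}}}{Q}$ is, up to isomorphism, a module with exactly $t$ pairwise non-isomorphic indecomposable summands. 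Neither point is a serious obstacle, which is consistent with the fact that the statement is announced as provable ``by similar arguments'' to the already-established Corollary~\ref{corollary dado sist propio, existe Ext-iny. con condicion sobre tilting}.
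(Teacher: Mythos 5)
Your proof is correct and follows exactly the path the paper indicates when it says the result is "proven by using similar arguments" to the preceding corollary: you transpose the two halves of that argument, replacing the roles of $\Theta$ and $\Psi$, deriving $\Tor^{\Gamma}_1(Q,\leftidx{_\Gamma}{\overline{\Delta}})=0$ and $\Ext^1_\Lambda(Q,G(\leftidx{_\Gamma}{\overline{\Delta}}))=0$ from Theorem \ref{teo. dado sistema estrat.extinyectivo, existe sist. estart. propio.}, and using the Cartan--Eilenberg isomorphism together with \cite[Theorem 1.6, Proposition 2.2]{AHLU} to go back and forth between $\leftidx{_{\Gamma^{op}}}{Q}\in\F(\leftidx{_{\Gamma^{op}}}{\Delta})\cap\F(\leftidx{_{\Gamma^{op}}}{\Delta})^{\bot_1}$ and the vanishing of $\Tor^{\Gamma}_1$. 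Your observation that $\Ext^{1}_{\Gamma^{op}}(\leftidx{_{\Gamma^{op}}}{\Delta},\leftidx{_{\Gamma^{op}}}{Q})=0$ is a standing consequence of the Ext-injective hypothesis (via Theorem \ref{prop. adicional. equival. para sist. estrat. extinyectivos,MMS}) is a mild streamlining rather than a different route, and all the bookkeeping between $\Gamma$, $\Gamma^{op}$, $\leq$ and $\leq^{op}$ is handled correctly.
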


 \

{\bf Acknowledgements.} The first author was partially
supported by the Project PAPIIT-Universidad Nacional Aut\'onoma de
M\'exico IN100810-3, and by the joint research project Mexico
(CONACyT)-Argentina(MINCYT): ``Homology, stratifying systems and
representations of algebras". The second author is a researcher from CONICET, Argentina. The second and third authors acknowledge partial support from Universidad Nacional del Sur and from CONICET.

\small
\markright{}

\footnotesize

\vskip3mm \noindent Octavio Mendoza Hern\'andez\\
Instituto de Matem\'aticas\\ Universidad Nacional Aut\'onoma de M\'exico.\\
Circuito Exterior, Ciudad Universitaria\\
C.P. 04510, M\'exico, D.F. MEXICO.\\ {\tt omendoza@matem.unam.mx}

\vskip3mm \noindent Mar\'{\i}a In\'es Platzeck\\
Instituto de Matem\'atica Bah\'{\i}a Blanca,\\
Universidad Nacional del Sur.\\
8000 Bahia Blanca, ARGENTINA.\\
{\tt platzeck@uns.edu.ar}

\vskip3mm \noindent     Melina Vanina Verdecchia\\
Instituto de Matem\'atica Bah\'{\i}a Blanca,\\
Universidad Nacional del Sur.\\
8000 Bahia Blanca, ARGENTINA.\\
{\tt mverdec@uns.edu.ar}

\end{document}